\theoremstyle{plain}
\numberwithin{equation}{section}
\newtheorem{theorem}{Theorem}[section]
\newtheorem{lemma}[theorem]{Lemma}
\newtheorem{corollary}[theorem]{Corollary}
\newtheorem{proposition}[theorem]{Proposition}
\theoremstyle{remark}
\newtheorem{remark}[theorem]{Remark}
\def\ve{\varepsilon}
\def\R{\mathbf R}
\newcommand{\gjms}{\, {\mathbf P}^{2m}_n}
\newcommand{\paneitz}{\, {\mathbf P}^4_3}
\newcommand{\Q}{\frac{n-2m}2 Q_n^{2m}}
\renewcommand{\S}{\mathbb S}
\newcommand{\nablaS}{\nabla_{g_{\S^3}}}
\newcommand{\DeltaS}{\Delta_{g_{\S^3}}}
\newcommand{\DSn}{\Delta_{g_{\S^n}}}
\newcommand{\vp}{\varphi}
\newcommand{\dsn}{d\mu_{\S^n}}
\newcommand{\dst}{d\mu_{\S^3}}
\numberwithin{equation}{section}
\def\@cite#1#2{[\textbf{#1}\if@tempswa, #2\fi]}
\title[On the Hang--Yang conjecture for GJMS equations on $\S^n$]{On the Hang--Yang conjecture for GJMS equations on $\S^n$}
\def\cfac#1{\ifmmode\setbox7\hbox{$\accent"5E#1$}\else\setbox7\hbox{\accent"5E#1}\penalty 10000\relax\fi\raise 1\ht7\hbox{\lower1.05ex\hbox to 1\wd7{\hss\accent"13\hss}}\penalty 10000\hskip-1\wd7\penalty 10000\box7 }
\author[A. Hyder]{Ali Hyder}
\address[A. Hyder]{TIFR Centre for Applicable Mathematics, 
Sharadanagar, Bangalore 560065, India}
\email{\href{mailto: A. Hyder <hyder@tifrbng.res.in>}{hyder@tifrbng.res.in}}
\author[Q.A. Ng\^o]{Qu\'{\^o}c Anh Ng\^o}
\address[Q.A. Ng\^o]{
University of Science, Vietnam National University, Hanoi, Vietnam
\\
ORCID iD: 0000-0002-3550-9689}
\email{\href{mailto: Q.A. Ng\^o <nqanh@vnu.edu.vn>}{nqanh@vnu.edu.vn}}
\begin{document}


\begin{abstract}
This work concerns a Liouville type result for positive, smooth solution $v$ to the following higher-order equation
\[
\gjms (v) = \Q (\ve v+v^{-\alpha} ) 
\]
on $\S^n$ with $m \geq 2$, $3 \leq n < 2m $, $0<\alpha \leq (2m+n)/(2m-n)$, and $\ve>0$. Here $\gjms$ is the GJMS operator of order $2m$ on $\S^n$ and $Q_n^{2m} =(2/(n-2m)) \gjms(1)$ is constant. We show that if $\ve>0$ is small and $0<\alpha \leq (2m+n)/(2m-n)$, then any positive, smooth solution $v$ to the above equation must be constant. The same result remains valid if $\ve=0$ and $0<\alpha < (2m+n)/(2m-n)$. In the special case $n=3$, $m=2$, and $\alpha=7$, such Liouville type result was recently conjectured by F. Hang and P. Yang (\textit{Int. Math. Res. Not. IMRN}, 2020). As a by-product, we obtain the sharp (subcritical and critical) Sobolev inequalities 
\[
\Big( \int_{\S^n} v^{1-\alpha} \dsn \Big)^{\frac {2}{\alpha -1}} \int_{\S^n} v \gjms (v) \dsn
\geq \frac{\Gamma (n/2 + m)}{\Gamma (n/2 - m )} | \S^n|^\frac{\alpha + 1}{\alpha - 1}
\]
for the GJMS operator $\gjms$ on $\S^n$ under the conditions $n \geq 3$, $n=2m-1$, and $\alpha \in(0,1) \cup (1, 2n+1]$. A log-Sobolev type inequality, as the limiting case $\alpha=1$, is also presented.
\end{abstract}

\date{\bf \today \, at \currenttime}

\subjclass[2000]{53C18, 58J05, 35A23, 26D15}

\keywords{GJMS operator; Sobolev inequality; moving plane method; compactness result, Liouville result}

\maketitle

\section{Introduction}

Let $n\geq 3$ be an odd integer, $2m>n$, and $0<\alpha\leq (n+2m)/(2m-n)$. In this work, we consider the following equation
\begin{subequations}\label{eq-MAIN}
\begin{align}
\gjms (v) = \Q (\ve v+v^{-\alpha} )\quad\text{in }\S^n. 
\tag*{ \eqref{eq-MAIN}$_\varepsilon$}
\end{align}
\end{subequations}
Here $\gjms$ is the well-known GJMS operator on $\S^n$ equipped with the standard metric $g_{\S^n}$, which is given as follows 
\[
\gjms :=\prod_{i=0}^{m-1}\left(-\DSn - (i+\frac n2)(i-\frac n2+1)\right),
\]
see \cite{GJMS}, and 
\[
Q_n^{2m} := \frac 2{n-2m} \gjms (1) = \frac 2{n-2m} \frac{\Gamma (n/2 + m)}{\Gamma (n/2 - m )}
\]
is a non-zero constant representing the so-called $Q$-curvature of $(\S^n, g_{\S^n})$, namely
\[
\gjms =(-\DSn)^m + \sum_{1\leq k \leq m-1} c_k (-\DSn)^k + \Q
\]
for suitable constants $c_k$ with $1 \leq k \leq m-1$. A special case of the operator $\gjms$, which has often been studied over the last two decades, is the well-known Paneitz operator, which is of fourth order. This example of a higher-order conformal operator gains interest because of its role in conformal geometry; see \cite{CGY02, HangYang16}. On $(\S^3, g_{\S^3})$, the Paneitz operator is given by 
\[
\paneitz = \DeltaS^2 + \frac 12 \DeltaS - \frac {15}{16},
\]
and therefore $Q_3^4 = -2 \Gamma (7/2)/\Gamma(-1/2)= 15/8$. Using the above recursive formula for $\gjms$ we can compute higher dimensional cases, for example
\[
\mathbf P_3^6 = -\DSn ^3 - \frac{23}4 \DSn^2 - \frac{27}{16}\DSn + \frac{315}{64} \quad \text{on } (\S^3, g_{\S^3})
\]
with $Q_3^6 = -105/32$ and
\[
\mathbf P_5^6 = -\DSn ^3 + \frac{13}4 \DSn^2 + \frac{93}{16}\DSn - \frac{945}{64} \quad \text{on } (\S^5, g_{\S^5})
\]
with $Q_5^6 =945/32$. One should pay attention on the sign difference of $Q_3^6$ and $Q_5^6$.

Our motivation of working on the equation \eqref{eq-MAIN}$_\varepsilon$ traces back to a recent conjecture by F. Hang and P. Yang in \cite{HangYang} that we are going to describe now. This conjecture concerns the following sharp critical Sobolev inequality on $\S^3$
\begin{equation}\label{eq-S}
 \|\phi^{-1}\|_{L^6(\S^3)}^{2}
\int_{\S^3} \Big[ (\DeltaS \phi)^2 - \frac 12 |\nablaS \phi |^2 - \frac {15}{16} \phi^2 \Big] \dst 
\geq -\frac {15}{16} | \S^3|^{4/3}
\end{equation}
for any $\phi \in H^2 (\S^3)$ with $\phi>0$, which was already proved in \cite{YangZhu2004} by symmetrization argument and in \cite{HangYang2004} by variational argument. Apparently, the inequality \eqref{eq-S} can be rewritten as follows
\begin{equation}\label{eq-S-new}
\|\phi^{-1}\|_{L^6(\S^3)}^{2} \int_{\S^3} \phi \paneitz (\phi) \dst \geq -\frac {15}{16} | \S^3|^{4/3}
\end{equation}
for any $0< \phi \in H^2 (\S^3)$, because the integral in \eqref{eq-S} is nothing but $\int_{\S^3} \phi \paneitz (\phi) \dst$. In \eqref{eq-S-new} and what follows, $|\S^n|$ denotes the surface area of $\S^n$. Besides, by Morrey's theorem, functions in $H^2(\S^3)$ are continuous and therefore the condition $\phi>0$ is understood in pointwise sense. By direct calculation, one can easily verify that equality in \eqref{eq-S-new} occurs if $\phi$ is any positive constant. This tells us that the Paneitz operator $\paneitz $ on the standard sphere $\S^3$ is no longer positive; see \cite{XuYang} for the assumption on the positivity of the Paneitz operator on closed $3$-manifolds.

In an effort to provide a new proof for \eqref{eq-S-new} with the sharp constant, the authors in \cite{HangYang} propose a new way to prove the above Sobolev inequality by considering the following minimizing problem
\begin{equation}\label{eq-EP}
\inf_{0<\phi \in H^2 (\S^3) } \|\phi^{-1}\|_{L^6(\S^3)}^{2} \Big[ \int_{\S^3} \phi \paneitz (\phi) \dst + \ve \int_{\S^3} \phi^2 \dst \Big]
\end{equation}
for small $\ve >0$. Thanks to the small perturbation $\ve \|\phi\|_{L^2(\S^3)}^2$, it is standard and straightforward to verify that the extremal problem \eqref{eq-EP} has a minimizer. Such a minimizer, denoted by $v_\ve$, eventually solves
\[
\paneitz (v_\ve ) + \ve v_\ve = - v_\ve^{-7}
\]
on $\S^3$, up to a constant. Here is the key observation: if the above equation only admits constant solution for small $\ve>0$, namely $v_\ve \equiv \text{const.}$, then one immediately has
\[
\|\phi^{-1}\|_{L^6(\S^3)}^{2} \Big[ \int_{\S^3} \phi \paneitz (\phi) \dst + \ve \int_{\S^3} \phi^2 \dst \Big]
\geq | \S^3|^{1/3} \Big[ \int_{\S^3} \paneitz (1) \dst + \ve |\S^3|\Big]
\]
for any $0<\phi \in H^2 (\S^3) $. Having this and as $\paneitz (1) = -(1/2) Q_3^4 = -15/16$, letting $\varepsilon \searrow 0$ yields \eqref{eq-S-new}. The novelty of this new approach is that it automatically implies the sharp form of \eqref{eq-S-new} with the precise sharp constant.

The above observation leads Hang and Yang to propose the following conjecture.

\smallskip
\noindent\textbf{The Hang--Yang conjecture} (\cite[page 3299]{HangYang}). \textit{Let $\ve > 0$ be a small number. If $v$ is a positive smooth solution to 
\[
\paneitz (v) + \ve v = - v^{-7}
\]
on $\S^3$, then $v$ must be a constant function.}

In a recent work Zhang \cite{Zhang} provides an affirmative answer to the above conjecture. The idea behind Zhang's proof is first to transfer the differential equation on $\S^3$ to some differential equation on $\R^3$ and then to classify solutions to that equation on $\R^3$. More precisely, let $\pi_N : \S^3 \to \R^3$ be the stereographic projection from the north pole $N$; see subsection \ref{subsec-SP} below. The pullback $(\pi_N^{-1})^*$ enjoys 
\[
(\pi_N^{-1})^* (g_{\S^3}) = \big( \frac 2{1+|x|^2} \big)^2 dx^2
\]
and for any smooth solution $v$ on $\S^3$ there holds
\[
\paneitz (v) \circ \pi_N^{-1} = \big( \frac 2{1+|x|^2} \big)^{-7/2} \Delta^2 \Big( \big( \frac 2{1+|x|^2} \big)^{-1/2} v \circ \pi_N^{-1} \Big).
\]
(Here and in the sequel, $\Delta$ is the usual Laplacian on Euclidean spaces.) Setting 
\begin{equation}\label{eq-u=v}
u (x) := \big( \frac {1+|x|^2} 2 \big)^{1/2} \big( v \circ \pi_N^{-1} \big) (x),
\end{equation}
we see that if $v$ solves $\paneitz (v ) + \ve v = - v^{-7}$ in $\S^3$, then $u$ solves
\[
\Delta^2 u(x) + \varepsilon \big( \frac 2{1+|x|^2} \big)^4 u (x) 
=- u^{-7} (x) 
\quad \text{in }\R^3.
\]
Via a dedicated argument based on the method of moving planes and techniques from potential theory, which are rather involved, it is proved that $u$ is radially symmetric. Finally, with the help of a Kazdan--Warner type identity, the function $v$ must be constant.

Inspired by the work of Zhang described above, we are interested in Hang--Yang's conjecture in higher dimensional cases, namely we want to seek for a suitable Liouville type result for positive, smooth solution to equations involving GJMS operators. This leads us to investigate solutions to \eqref{eq-MAIN}$_\varepsilon$. Very similar to situation studied by Hang and Yang, our motivation to study the equation \eqref{eq-MAIN}$_\varepsilon$ comes from the higher-order sharp critical Sobolev inequality; see Theorem \ref{thm-SS} below. Using the perturbation approach introduced in \cite{HangYang}, we are able  to establish a Liouville type result for solutions to \eqref{eq-MAIN}$_\ve$. 

Toward a suitable Liouville type result, let us first describe some preliminary results on \eqref{eq-MAIN}$_\ve$. Our first observation concerns the admissible range for $\ve$. As the perturbation approach is being used, we require the condition $\ve \geq 0$; see the proof of Lemma \ref{lem-SSexistence}. Now, by integrating both sides of \eqref{eq-MAIN}$_\ve$ over $\S^n$ and as $2m-n >0$ and $Q_n^{2m} \ne 0$ we conclude that
\[
 ( 1- \ve ) \int_{\S^n} v \dsn = \int_{\S^n} v^{-\alpha} \dsn.
\]
This immediately tells us that $\ve < 1$. Thus, the admissible range for $\ve$ is $0 \leq \varepsilon < 1$. Having this, let us now state the main result of this paper.
 
\begin{theorem} \label{thm-main}
Let $n\geq 3$ be odd and $m>n/2$. Then there exists $\ve_*\in (0,1 )$ such that under one of the following conditions
\begin{enumerate}
 \item either $\ve\in (0,\ve_*)$ and $0<\alpha\leq (n+2m)/(2m-n)$
 \item or $\ve=0$ and $0<\alpha< (n+2m)/(2m-n)$
\end{enumerate} 
any positive, smooth solution to \eqref{eq-MAIN}$_\ve$ must be constant.
\end{theorem}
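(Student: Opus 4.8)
The plan is to reduce to a polyharmonic equation on $\R^n$ and run the method of moving planes there, in the spirit of Zhang's treatment of the Paneitz case \cite{Zhang}. First I would use the stereographic projection $\pi_N \colon \S^n \to \R^n$ together with the conformal covariance of $\gjms$, exactly as recalled in the introduction for $n=3$, $m=2$. Setting
\[
u(x) := \Big( \frac{1+|x|^2}{2} \Big)^{\frac{2m-n}{2}} \big( v \circ \pi_N^{-1}\big)(x),
\]
the equation \eqref{eq-MAIN}$_\ve$ is transformed into
\[
(-\Delta)^m u = \gjms(1) \Big[ \ve \Big(\tfrac{2}{1+|x|^2}\Big)^{2m} u + \Big(\tfrac{2}{1+|x|^2}\Big)^{\beta} u^{-\alpha}\Big] \quad\text{in }\R^n,
\]
with $\beta = \tfrac12\big((n+2m) - \alpha(2m-n)\big)\ge 0$, equality holding exactly at the critical exponent; moreover $u$ is smooth and positive with $u(x) = 2^{-(2m-n)/2}v(N)\,|x|^{2m-n}\big(1+o(1)\big)$ as $|x|\to\infty$. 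The structural point on which everything hinges is that the right-hand side always carries a \emph{strictly} decreasing radial factor: under hypothesis (1) this is $(2/(1+|x|^2))^{2m}$ (the companion factor $(2/(1+|x|^2))^{\beta}$ being merely non-increasing), and under hypothesis (2) it is $(2/(1+|x|^2))^{\beta}$ itself, since there $\beta>0$. For $\ve=0$ at the critical exponent that factor degenerates to a constant, the equation becomes translation- and dilation-invariant, and indeed non-constant solutions (the standard bubbles) exist — which is precisely why that case must be excluded.

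Since $2m>n$, the operator $(-\Delta)^m$ on $\R^n$ does not obey a maximum principle and its fundamental solution $c_{n,m}|x|^{2m-n}$ grows at infinity, so the argument has to be carried out entirely in integral form; assembling the requisite potential-theoretic facts is the technical core, and here I would follow Zhang closely. Using that the right-hand side above decays like $|x|^{-(n+2m)}$ at infinity — both $(2/(1+|x|^2))^{\beta}u^{-\alpha}$ and the $\ve$-term are $O(|x|^{-(n+2m)})$ — one establishes a representation
\[
u(x) = c_{n,m}\int_{\R^n} |x-y|^{2m-n}\, \gjms(1)\Big[\ve\big(\tfrac{2}{1+|y|^2}\big)^{2m} u(y) + \big(\tfrac{2}{1+|y|^2}\big)^{\beta} u^{-\alpha}(y)\Big]\,dy + P(x),
\]
where $P$ is a polynomial that the precise asymptotics of $u$ force to be a constant, together with matching decay of $u$ and of its derivatives. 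This representation, and its uniform analogues for reflected copies of $u$, are what substitute for the maximum principle in what follows.

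Next I would run the method of moving planes on $u$, comparing it with its reflections $u^\lambda$ across the hyperplanes $\{x\cdot\nu = \lambda\}$ for each direction $\nu\in\S^{n-1}$, starting at $\lambda=-\infty$ and propagating the inequality $u\ge u^\lambda$. Writing $w_\lambda=u-u^\lambda$ and $f(x,t)=\gjms(1)\big[\ve(2/(1+|x|^2))^{2m}t+(2/(1+|x|^2))^{\beta}t^{-\alpha}\big]$, one has
\[
(-\Delta)^m w_\lambda = \big[f(x,u)-f(x,u^\lambda)\big] + \big[f(x,u^\lambda)-f(x^\lambda,u^\lambda)\big];
\]
the second bracket has the favourable sign because the weights are non-increasing in $|x|$ and $|x|\ge|x^\lambda|$ on the relevant region, while the first bracket equals $\partial_t f(x,\xi_\lambda)\,w_\lambda$ for some $\xi_\lambda$ between $u^\lambda$ and $u$, with $\partial_t f = \gjms(1)\big[\ve(2/(1+|x|^2))^{2m}-\alpha(2/(1+|x|^2))^{\beta}\xi_\lambda^{-\alpha-1}\big]$. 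Along the solution both summands of $\partial_t f$ behave like $|x|^{-4m}$ at infinity, so absorbing the unfavourably signed summand $\ve\,\gjms(1)(2/(1+|x|^2))^{2m}$ into the integral iteration is possible precisely when $\ve$ lies below a threshold $\ve_*\in(0,1)$ depending only on $n$ and $m$; when $\ve=0$ this term is absent and no restriction beyond strict subcriticality is needed. Propagating the planes in from every direction then forces $u$ to be radially symmetric about the \emph{origin} — the strict monotonicity of the decreasing radial factor being exactly what rules out a translated symmetry hyperplane.

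Radial symmetry of $u$ about the origin says that $v$ is invariant under all rotations of $\S^n$ fixing the poles $N$ and $-N$. To conclude I would observe that the stereographic projection from \emph{any} point $P\in\S^n$ carries \eqref{eq-MAIN}$_\ve$ into an equation of exactly the same shape (same weights, now centred at the image of $P$), so the preceding analysis applies unchanged and shows $v$ invariant under the rotations fixing $P$ and $-P$, for every $P$; since these subgroups generate $O(n+1)$ as $P$ varies over $\S^n$, $v$ must be constant. The main obstacle is unmistakably the moving-plane step in the absence of a maximum principle for $(-\Delta)^m$ with $2m>n$: everything must be read off the integral representation, the growth $u\sim|x|^{2m-n}$ has to be controlled along the reflected functions, and the estimates must be made quantitative enough to yield the single threshold $\ve_*$ valid over the whole range $0<\alpha\le(n+2m)/(2m-n)$. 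A further delicate point, absent from the Paneitz case, is that $\gjms(1)=\Gamma(n/2+m)/\Gamma(n/2-m)$ can be of either sign depending on $n$ and $m$, so one has to verify that the potential-theoretic normalisation — the integrability of the right-hand side, the reduction of $P$ to a constant, and the sign conventions in the moving-plane iteration — goes through uniformly in that sign.
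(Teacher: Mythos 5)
Your proposal shares the paper's overall architecture --- project to $\R^n$, derive an exact integral equation, run the method of moving planes in integral form, then sweep the projection point over $\S^n$ --- but it omits the paper's main technical ingredient: an a priori compactness bound for the solutions $v_\ve$ that is \emph{uniform} in $\ve$ on an interval $[0,\ve^*]$. This is Theorem \ref{thm-uniform}, proved in Section \ref{sec-compact} via a Pohozaev-type identity on $\R^n$ (Lemma \ref{Poho-Rn}) combined with a blow-up analysis. It matters exactly where you hand-wave: when you claim that absorbing the $\ve$-term ``is possible precisely when $\ve$ lies below a threshold $\ve_*\in(0,1)$ depending only on $n$ and $m$'', the comparison constants involve $\xi_\lambda^{-\alpha-1}$ with $\xi_\lambda$ between $u$ and $u^\lambda$, hence a lower bound on $u^{-\alpha-1}$, hence an upper bound on $u$ and therefore on $v$. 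Without a bound uniform in $\ve$, the threshold you extract is solution-dependent and not a universal $\ve_*$. The paper's Lemma \ref{lem-ve_*} invokes the uniform bound $1/C \le v_\ve \le C$ from Theorem \ref{thm-uniform} at exactly this point.

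A second gap is in the reduction to the integral equation. You assert that the polynomial remainder $P$ is ``forced to be a constant'' by the asymptotics of $u$, but the growth $u \sim c|x|^{2m-n}$ only bounds $\deg P \le 2m-n$; it does not reduce $P$ to a constant, since a nonradial homogeneous polynomial of degree $2m-n$ is compatible with the same order of growth. The paper in fact proves $P \equiv 0$ by a short but essential dual-projection argument (Lemmas \ref{lem-Pi_N=Pi_S}--\ref{lem-tuNtuS*} and the two lemmas following them): the north- and south-pole projections produce remainders $P_N,P_S$ with $P_S(x)=|x|^{2m-n}P_N(x/|x|^2)$, and since $n$ is odd the factor $|x|^{2m-n}$ is not a polynomial, forcing $P_N\equiv P_S\equiv 0$. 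You also plan to ``follow Zhang closely'' for the potential-theoretic preparation, but the authors note explicitly that Zhang's Paneitz-specific argument does not carry over to $m\ge 3$; the dual-projection mechanism is introduced precisely to replace it. Finally, a structural remark: you write the moving-plane comparison at the level of the PDE via the mean value theorem ($\partial_t f(x,\xi_\lambda)w_\lambda$), whereas the paper compares $F_\ve(x)$ and $F_\ve(x^\lambda)$ directly inside the integral kernel (Lemma \ref{lem-w}, Lemma \ref{lem-ve_*}); since $(-\Delta)^m$ with $2m>n$ lacks a maximum principle --- the obstacle you yourself flag --- the differential-form decomposition would still have to be converted to a fully integral argument, which your sketch does not carry out.
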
 

We have the following remarks:
\begin{itemize}
 \item The above result again confirms the Hang--Yang conjecture for the Paneitz operator on $\S^3$, and generalizes the result of Zhang in the critical setting in higher dimensional cases.
 \item Theorem \ref{thm-main} can be compared with the Liouville type results obtained by Bidaut-V\'eron and V\'eron in \cite[Theorem 6.1]{VV} for the Emden equation, see also the work of Gidas and Spruck in \cite{Gidas}. Note that the condition $\alpha<(n+2m)/(2m-n)$ is sharp for $\ve=0$ as the result does not hold if $\alpha = (n+2m)/(2m-n)$. This is because in this limiting case the equation \eqref{eq-MAIN}$_0$ is conformally invariant; see section \ref{sec-compact}.
 \item The threshold $\ve_*$ is given in Lemma \ref{lem-ve_*}.
 \item Although for any $0 \leq \varepsilon < 1$, equation \eqref{eq-MAIN}$_\ve$ always admits the trivial solution $v_\ve\equiv (1 - \ve)^{-1/(\alpha+1)}$, but it is not clear whether or not the above Liouville type result still holds for $\ve \in [\ve_*, 1)$. This seems to be an interesting open question.
\end{itemize}

To prove Theorem \ref{thm-main}, we adopt the strategy used by Zhang. Such strategy can be formulated as the following two main steps: first to transfer \eqref{eq-MAIN}$_\ve$ in $\S^n$ to the  equation \eqref{eq-MAIN-DE}$_\ve$ and the corresponding integral equation in $\R^n$, then to study symmetry properties of solutions to these equations for small $\ve>0$. However, to be able to handle higher-order cases, our approach is significantly different from Zhang. One major reason is that less results is known for the higher-order cases compared to the case $m=2$. For example, we do not know if the preliminary results of Hang and Yang mentioned in \cite[section 2]{Zhang} are available for $m \geq 3$. Because of this difficulty, instead of the differential equation \eqref{eq-MAIN-DE}$_\ve$, we mainly work on the corresponding integral equation on $\R^n$, and directly prove compactness results and symmetry properties of solutions. As pays off, our analysis is much simpler, and could handle higher-order cases efficiently. 

As the operator $\gjms$ is conformally covariant, for any smooth function $\vp$ on $\S^n$ we have the following identity ($\pi$ denotes the stereographic projection from $\S^n$ to $\R^n$ with respect to either the north or the south pole)
\[
\gjms (\vp)\circ\pi^{-1}=\big( \frac{2}{1+|x|^2}\big)^{-\frac{n+2m}{2}} (-\Delta)^m\Big(\big( \frac{2}{1+|x|^2}\big)^\frac{n-2m}{2} \vp\circ\pi^{-1}\Big);
\]
see e.g. \cite[Section 2]{Hang}. Then, similar to \eqref{eq-u=v}, by setting 
\begin{align}\label{def-u}
u(x):=\big( \frac{2}{1+|x|^2}\big)^\frac{n-2m}{2} \big( v\circ\pi^{-1} \big)
\end{align} 
and
\begin{align}\label{def-F}
F_{\ve,u}(x):=\ve \big( \frac{2}{1+|x|^2}\big)^ {2m} u(x) +\big( \frac{2}{1+|x|^2}\big)^{\frac{n+2m}{2} +\alpha \frac{n-2m}{2} } u(x)^{-\alpha}
\end{align}
we see that $u$ satisfies 
\begin{subequations}\label{eq-MAIN-DE}
\begin{align}
(-\Delta)^m u=\Q F_{\ve,u} \quad \text{in } \R^n.
\tag*{ \eqref{eq-MAIN-DE}$_\varepsilon$}
\end{align}
\end{subequations}
In view of \eqref{def-u}, we know that the function $u$ on $\R^n$ has exact growth $|x|^{2m-n}$ at infinity. This additional information allows us to transfer the differential equation \eqref{eq-MAIN-DE}$_\ve$ into the following integral equation
\[
u(x) = \gamma_{2m,n} \int_{\R^n} |x-y|^{2m-n} F_{\ve,u} (y) dy \quad \text{on } \R^n
\]
for some constant $\gamma_{2m,n}>0$; see Theorem \ref{thm-PDE-IE} below. Notice that in general there might be more solutions to \eqref{eq-MAIN-DE}$_\ve$ than the above integral equation, see e.g. \cite{HW} and \cite{DuocNgo22}. 

Let us emphasize that transferring to an equivalent integral equation on $\R^n$ also appears in the work of Zhang, but the proof provided in \cite{Zhang} does not seem to work in our case. Similar integral representation in the fractional setting also appears in \cite{FKT22}. In our work, by exploiting some nice structures on $\S^n$ as well as some intriguing properties of the stereographic projection, we offer a completely new argument, which is surprisingly simpler; see section \ref{sec-DE->IE}.

Having the above integral equation in hand, we use a variant of the method of moving planes in the integral form to show that any positive smooth solution $u$ to the above integral equation with exact growth $|x|^{2m-n}$ at infinity must be radially symmetric. The symmetry of solutions to the integral equation helps us to conclude that the corresponding function $v$, appeared as in \eqref{def-u}, must be constant. The strategy we just describe seems to be very simple and straightforward at the first glance, but there are two major difficulties that we want to highlight. First, it is worth emphasizing that the method of moving planes and its variants work well in the case of equations with positive exponents; unfortunately, our equations, both differential and integral forms, have a negative exponent. Second, by analyzing the form of $F_{\ve,u}$ in \eqref{def-F}, one immediately notices that because of our special choice of perturbation, there are two powers of $u$, whose exponents have opposite sign. Unless $\ve = 0$, otherwise to run the method of moving planes, one needs to establish certain compactness result for solutions to \eqref{eq-MAIN}$_\ve$ for suitable small $\ve$, which costs us some energy. 

Concerning classification of solutions to \eqref{eq-MAIN-DE}$_\ve$ with $\ve=0$ and with the RHS depending only on $u$, that is equation of the form $(-\Delta)^mu=cu^{-\alpha}$ we refer to \cite{HW,Ngo,Li} and the references therein.

Finally, to illustrate our finding on a Liouville type result for solutions to \eqref{eq-MAIN}$_\ve$, we revisit the sharp critical Sobolev inequality for $\gjms$ on $\S^n$ proved in \cite{Hang}. In fact, we offer both critical and subcritical inequalities at once.

\begin{theorem}\label{thm-SS}
Let $n \geq 3$ be an odd integer and $m=(n+1)/2$. Then, for any $\phi \in H^m (\S^n)$ with $\phi>0$ and any $\alpha \in (0,1) \cup (1, 2n+1]$, we have the following sharp Sobolev inequality
\begin{equation}\label{eq-sSobolev}
 \Big( \int_{\S^n} \phi^{1-\alpha} \dsn \Big)^{\frac {2}{\alpha -1}} \int_{\S^n} \phi \gjms (\phi) \dsn
\geq \frac{\Gamma (n/2 + m)}{\Gamma (n/2 - m )} | \S^n|^\frac{\alpha + 1}{\alpha - 1}.
\end{equation} 
Moreover, the equality occurs if $\phi$ is any positive constant.
\end{theorem}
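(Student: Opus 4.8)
The plan is to run the perturbation scheme outlined in the introduction, using Theorem \ref{thm-main} as the decisive ingredient. Fix $\alpha\in(0,1)\cup(1,2n+1]$ and note that, since $m=(n+1)/2$, one has $(n+2m)/(2m-n)=2n+1$, so $\alpha$ lies in the range covered by Theorem \ref{thm-main}; write $\kappa:=\tfrac{n-2m}{2}Q_n^{2m}=\gjms(1)=\Gamma(n/2+m)/\Gamma(n/2-m)$, and observe that $\kappa<0$ because $n=2m-1$ forces $\Gamma(n/2-m)=\Gamma(-1/2)<0$. For $\varepsilon>0$ small I would study the scale-invariant functional
\[
\mathcal E_\varepsilon(\phi):=\Big(\int_{\S^n}\phi^{1-\alpha}\dsn\Big)^{\frac{2}{\alpha-1}}\Big[\int_{\S^n}\phi\,\gjms(\phi)\dsn-\varepsilon\,\kappa\int_{\S^n}\phi^2\dsn\Big]
\]
over $\{0<\phi\in H^m(\S^n)\}$, the weight $-\varepsilon\kappa>0$ of the perturbation being chosen so that, after exploiting the scaling freedom $\phi\mapsto t\phi$, the Euler--Lagrange equation of $\mathcal E_\varepsilon$ is precisely \eqref{eq-MAIN}$_\varepsilon$ (equivalently one may use the bare perturbation $\varepsilon\int_{\S^n}\phi^2\dsn$ and reparametrize afterwards).

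First I would establish that, for all sufficiently small $\varepsilon>0$, the infimum $\inf\mathcal E_\varepsilon$ is attained by a positive $v_\varepsilon\in H^m(\S^n)$ which is moreover smooth and bounded away from zero; this is Lemma \ref{lem-SSexistence}, smoothness and strict positivity following from elliptic regularity for the higher-order operator $\gjms$ applied to the Euler--Lagrange equation together with a maximum-principle/Harnack argument. Computing the first variation gives $\gjms(v_\varepsilon)-\varepsilon\kappa\,v_\varepsilon=\lambda_\varepsilon\,v_\varepsilon^{-\alpha}$ with multiplier
\[
\lambda_\varepsilon=\frac{\int_{\S^n}v_\varepsilon\,\gjms(v_\varepsilon)\dsn-\varepsilon\kappa\int_{\S^n}v_\varepsilon^2\dsn}{\int_{\S^n}v_\varepsilon^{1-\alpha}\dsn}.
\]
Since $\mathcal E_\varepsilon$ evaluated at a positive constant equals $|\S^n|^{\frac{\alpha+1}{\alpha-1}}\kappa\,(1-\varepsilon|\S^n|)$, which is negative for small $\varepsilon$, the infimum is negative; as the first factor of $\mathcal E_\varepsilon$ is positive, the bracket is negative at the minimum, so $\lambda_\varepsilon<0$, and together with $\kappa<0$ this provides a unique $t>0$ with $t^{1+\alpha}\lambda_\varepsilon=\kappa$. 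The rescaled function $w_\varepsilon:=t\,v_\varepsilon$ is then a positive smooth solution of $\gjms(w_\varepsilon)=\kappa(\varepsilon w_\varepsilon+w_\varepsilon^{-\alpha})$, that is, of \eqref{eq-MAIN}$_\varepsilon$.

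Now I invoke Theorem \ref{thm-main}: for $\varepsilon\in(0,\varepsilon_*)$ the solution $w_\varepsilon$, and hence $v_\varepsilon$, must be constant, so $\inf\mathcal E_\varepsilon$ is attained at a constant and therefore equals $|\S^n|^{\frac{\alpha+1}{\alpha-1}}\kappa\,(1-\varepsilon|\S^n|)$. Consequently, for every $0<\phi\in H^m(\S^n)$,
\[
\Big(\int_{\S^n}\phi^{1-\alpha}\dsn\Big)^{\frac{2}{\alpha-1}}\Big[\int_{\S^n}\phi\,\gjms(\phi)\dsn-\varepsilon\kappa\int_{\S^n}\phi^2\dsn\Big]\geq|\S^n|^{\frac{\alpha+1}{\alpha-1}}\kappa\,(1-\varepsilon|\S^n|).
\]
Fixing $\phi$, so that the factor $(\int_{\S^n}\phi^{1-\alpha}\dsn)^{2/(\alpha-1)}$ is a positive constant, and letting $\varepsilon\searrow0$ --- both sides being affine in $\varepsilon$ --- yields \eqref{eq-sSobolev}. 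Finally, evaluating the left-hand side of \eqref{eq-sSobolev} at a positive constant $\phi\equiv c$ produces exactly $\kappa\,|\S^n|^{\frac{\alpha+1}{\alpha-1}}=\frac{\Gamma(n/2+m)}{\Gamma(n/2-m)}\,|\S^n|^{\frac{\alpha+1}{\alpha-1}}$, which is the right-hand side; hence equality holds for positive constants.

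The main obstacle is the very first step, namely the existence of the perturbed minimizer (Lemma \ref{lem-SSexistence}). The quadratic form $\phi\mapsto\int_{\S^n}\phi\,\gjms(\phi)\dsn$ is \emph{not} positive definite: the eigenvalue of $\gjms$ on the $k$-th spherical harmonic equals $\Gamma(k+n+\tfrac12)/\Gamma(k-\tfrac12)$, negative exactly for $k=0$ (the constants) and positive for every $k\geq1$, so coercivity of the numerator has to be extracted from the factor involving $\phi^{-1}$; and at the critical exponent $\alpha=2n+1$ the compactness of minimizing sequences is genuinely delicate and hinges on the sign of the perturbation, in the spirit of the Brezis--Nirenberg argument (for subcritical $\alpha$ it is routine). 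Once Theorem \ref{thm-main} and Lemma \ref{lem-SSexistence} are granted, all the remaining steps are soft.
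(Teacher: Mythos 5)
Your argument follows the paper's proof verbatim in strategy and in nearly every detail: minimize the perturbed scale-invariant functional $\mathcal E_\ve$ (the paper's $\mathcal S_\ve$ in \eqref{eq-SS-O}), obtain a smooth positive minimizer via Lemma \ref{lem-SSexistence}, rescale the Euler--Lagrange equation into \eqref{eq-MAIN}$_\ve$, invoke Theorem \ref{thm-main} to conclude the minimizer is constant, and let $\ve\searrow0$. The only flaw is a harmless arithmetic slip: $\mathcal E_\ve$ evaluated at a positive constant equals $\kappa(1-\ve)\,|\S^n|^{(\alpha+1)/(\alpha-1)}$, not $\kappa(1-\ve|\S^n|)\,|\S^n|^{(\alpha+1)/(\alpha-1)}$, which does not affect the $\ve\searrow0$ limit.
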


Let us have some comments on Theorem \ref{thm-main} above.

\begin{remark}\label{rmk}\ \
\begin{itemize}
 \item Although the condition $n=2m-1$ is not required in Theorem \ref{thm-main}, but in our proof of \eqref{eq-sSobolev} we heavily use it as in this case we have the advantage of $Q$-curvature $Q^{2m}_n$ being positive. In general, the inequality \eqref{eq-sSobolev} is not true for $n<2m-3$, see e.g. \cite{FKT22}. 
 
 \item Apparently, by chosing $\alpha = (n+2m)/(2m-n)=2n+1$, our inequality \eqref{eq-sSobolev} includes the following critical Sobolev inequality
\begin{equation}\label{eq-cSobolev}
\Big( \int_{\S^n} \phi^{-\frac{2n}{2m-n}} \dsn \Big)^{\frac {2m-n}{n}} \int_{\S^n} \phi \gjms (\phi) \dsn
\geq \frac{\Gamma (n/2 + m)}{\Gamma (n/2 - m )} | \S^n|^\frac{2m}{n} ,
\end{equation}
which was already proved in \cite{Hang}, see also \cite{HangYang2004} and \cite{FKT22}.

 \item  The case $\alpha = 1$ is excluded in Theorem \ref{thm-SS} due to the presence of the term $1/(\alpha-1)$. For $\alpha=1$, by a limiting argument one obtains the  inequality \eqref{eq-sSobolev-limit} below.

 \item Our last comment concerns the order of the inequality \eqref{eq-sSobolev} as $\alpha$ varies. It turns out that the subcritical case $0<\alpha<2n+1$ can be obtained from the critical case $\alpha=2n+1$, see Section \ref{sec-SS} for more details.

\end{itemize}
\end{remark}

Note that our inequality \eqref{eq-sSobolev} can be rewritten as
\[ 
 \Big( \fint_{\S^n} \phi^{1-\alpha} \dsn \Big)^{\frac {2}{\alpha -1}} \fint_{\S^n} \phi \gjms (\phi) \dsn
\geq \frac{\Gamma (n/2 + m)}{\Gamma (n/2 - m )},
\]
where $ \fint_{\S^n} := |\S^n|^{-1} \int_{\S^n}$ denotes the average. Using this new form one can easily compute the limit as $\alpha \searrow 1$ to obtain an inequality in the limiting case as shown in the following corollary.

\begin{corollary}\label{cor-SS-limit}
Let $n \geq 3$ be an odd integer and $m=(n+1)/2$. Then, for any $\phi \in H^m (\S^n)$ with $\phi>0$, we have the following sharp Sobolev inequality
\begin{equation}\label{eq-sSobolev-limit}
\exp \Big(-2\fint_{\S^n}\log\phi \dsn \Big) \fint_{\S^n}\phi \gjms (\phi)\dsn\geq \frac{\Gamma(n/2+m)}{\Gamma(n/2-m)}.
\end{equation} 
Moreover, the equality occurs if $\phi$ is any positive constant.
\end{corollary}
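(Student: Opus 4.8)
The plan is to derive Corollary \ref{cor-SS-limit} from Theorem \ref{thm-SS} by a limiting argument as $\alpha \searrow 1$, using the rewritten form of \eqref{eq-sSobolev} with averages. First I would fix $\phi \in H^m(\S^n)$ with $\phi>0$; since $n=2m-1$, Morrey's theorem guarantees $\phi$ is continuous, hence bounded above and below by positive constants, so all the integrals below are finite and the interchange of limit and integral is harmless. The key observation is that for the averaged quantity $A(\alpha) := \big( \fint_{\S^n} \phi^{1-\alpha}\,\dsn \big)^{2/(\alpha-1)}$, writing $1-\alpha = -(\alpha-1)$ and $t = \alpha - 1 \to 0^+$, we have
\[
\log A(\alpha) = \frac{2}{t} \log\Big( \fint_{\S^n} e^{-t \log \phi}\,\dsn \Big).
\]
Expanding $e^{-t\log\phi} = 1 - t\log\phi + o(t)$ inside the average (justified by dominated convergence, using $c \le \phi \le C$) gives $\fint_{\S^n} e^{-t\log\phi}\,\dsn = 1 - t \fint_{\S^n} \log\phi\,\dsn + o(t)$, and therefore $\log A(\alpha) \to -2 \fint_{\S^n} \log\phi\,\dsn$ as $\alpha \searrow 1$. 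Exponentiating, $A(\alpha) \to \exp\big( -2 \fint_{\S^n} \log\phi\,\dsn \big)$.

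The second factor $\fint_{\S^n} \phi \gjms(\phi)\,\dsn$ does not depend on $\alpha$ at all, so it passes through the limit untouched. Hence, taking $\alpha \searrow 1$ along any sequence in $(1, 2n+1]$ in the averaged form of \eqref{eq-sSobolev},
\[
\exp\Big( -2 \fint_{\S^n} \log\phi\,\dsn \Big) \fint_{\S^n} \phi \gjms(\phi)\,\dsn \geq \frac{\Gamma(n/2+m)}{\Gamma(n/2-m)},
\]
which is precisely \eqref{eq-sSobolev-limit}. Alternatively, one could take $\alpha \nearrow 1$ through $(0,1)$ and obtain the same limit, since the computation of $\log A(\alpha)$ above is valid for $t \to 0$ from either side; this shows the inequality \eqref{eq-sSobolev-limit} is the common limiting case of both the subcritical-from-below and subcritical-from-above families. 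Finally, for the equality statement, if $\phi \equiv c > 0$ is constant then $\log\phi \equiv \log c$, so the left-hand side equals $e^{-2\log c} \cdot c^2 \cdot \fint_{\S^n} \gjms(1)\,\dsn = \gjms(1) = \Gamma(n/2+m)/\Gamma(n/2-m)$, giving equality.

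I do not expect any serious obstacle here; the only point requiring a line of care is the justification of the first-order expansion of the average of $e^{-t\log\phi}$, i.e.\ the interchange $\lim_{t\to 0} t^{-1}(\fint e^{-t\log\phi} - 1) = -\fint \log\phi$, which follows cleanly from the pointwise bounds $0 < c \le \phi \le C < \infty$ on the continuous function $\phi$ together with dominated convergence (the difference quotients $t^{-1}(e^{-t\log\phi} - 1)$ are uniformly bounded for small $t$ in terms of $\log c$ and $\log C$). If one prefers to avoid even that, one may instead invoke the standard fact that $\|\psi\|_{L^p} \to \exp(\fint \log\psi)$ as $p \to 0$ for $\psi$ bounded between positive constants, applied to $\psi = \phi^{-1}$ with $p = \alpha - 1$. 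Either way the limit passage is routine, and the rest is bookkeeping.
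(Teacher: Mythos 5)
Your proof is correct. The paper does not actually write out a proof of Corollary~\ref{cor-SS-limit} at all: it remarks that "one can easily compute the limit as $\alpha\searrow 1$" in the averaged form, and then, rather than carrying out that limit, it cites Proposition~\ref{prop-order} as a way to obtain \eqref{eq-sSobolev-limit} \emph{without} any limiting process and explicitly omits the proof. Your argument is precisely the limit computation the paper alludes to, with the necessary justification spelled out: you reduce the convergence of $\big(\fint \phi^{1-\alpha}\big)^{2/(\alpha-1)}$ to the standard fact that the averaged $L^p$-norm tends to the geometric mean as $p\to 0$, justified by dominated convergence since Morrey embedding makes $\phi$ bounded between positive constants. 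The paper's alternative (the middle arrow of Proposition~\ref{prop-order}) instead applies Jensen's inequality at a single fixed $\beta\in(1,2n+1)$, using $\exp\big(-2\fint\log\phi\big)\leq\big(\fint\phi^{1-\beta}\big)^{2/(\beta-1)}$ and the sign convention $\int\phi\,\gjms(\phi)\,\dsn<0$ from \eqref{eq-convention} to pass the inequality through the product; this is cleaner in that it avoids any discussion of interchanging limit and integral, and it yields the log-inequality from any one subcritical exponent rather than from a limit of exponents. Both routes are valid; yours is perhaps the more natural one to see first, while the paper's is the one that makes the logical hierarchy in Proposition~\ref{prop-order} tidy. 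One small point worth being aware of (you implicitly handle it correctly): when passing the inequality to the limit you use that $\fint\phi\,\gjms(\phi)\,\dsn$ is a fixed number (possibly negative, and indeed negative whenever the inequality has content), so multiplying by the convergent positive factor $A(\alpha)\to\exp(-2\fint\log\phi)$ preserves the inequality without any issue.
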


It turns out that without using any limit process, one can still obtain \eqref{eq-sSobolev-limit} directly from \eqref{eq-sSobolev}; see Proposition \ref{prop-order}. As such, we omit the proof of \eqref{eq-sSobolev-limit}. Without using averages, \eqref{eq-sSobolev-limit} can be rewritten as follows
\[
\exp \Big(- \frac 2{|\S^n|}\int_{\S^n}\log\phi \dsn \Big) \int_{\S^n}\phi \gjms (\phi)\dsn\geq \frac{\Gamma(n/2+m)}{\Gamma(n/2-m)} |\S^n|.
\]
To the best of our knowledge, the above inequality (or the inequality \eqref{eq-sSobolev-limit}) seems to be new. 

Our final comment concerns a possible generalization to the fractional setting. Indeed, it seems that part of our argument can be quickly extended to the case of fractional operators of order $2s > n$ instead of GJMS operators of integer order $2m>n$. However, to maintain our work in a reasonable length, we leave this future research. 

Before closing this section, let us mention the organization of the paper.

\tableofcontents

 
\section{Some auxiliary results} 
\label{sec-DE->IE}


\subsection{Basics of the stereographic projection}
\label{subsec-SP}

As routine, we denote by $\pi_N$ and $\pi_S$ the stereographic projections from the north pole $N$ and from the south pole $S$ of the sphere $\S^n $ respectively. If we denote by $(x,x_{n+1})$ a general point in $\R^{n+1} = \R^n \times \R$, then we have the following expressions for $\pi_N$
\[
\pi_N (x, x_{n+1}) = \frac x{1-x_{n+1}}, \quad
\pi_N^{-1} ( x ) = \Big( \frac{2x}{|x|^2 + 1}, \frac{|x|^2 - 1}{|x|^2 +1}\Big) .
\]
Likewise, we also have similar expressions for $\pi_S$. But these expressions for $\pi_S$ can be derived quickly from those for $\pi_N$ by changing the sign of the last coordinate. In this sense, we arrive at
\[
\pi_S (x, x_{n+1}) = \frac x{1 + x_{n+1}}, \quad
\pi_S^{-1} ( x ) = \Big( \frac{2x}{|x|^2 + 1}, -\frac{|x|^2 - 1}{|x|^2 +1}\Big) .
\]
The following observation plays some role in our analysis.

\begin{lemma}\label{lem-Pi_N=Pi_S}
There holds
\[
\pi_N^{-1} (x) = \pi_S^{-1} \big( \frac x{|x|^2} \big), \quad 
\pi_S^{-1} (x) = \pi_N^{-1} \big( \frac x{|x|^2} \big)
\]
in $\R^n \setminus \{ 0 \}$.
\end{lemma}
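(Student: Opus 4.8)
The plan is to verify the two claimed identities by a direct substitution into the explicit formulas for $\pi_N^{-1}$ and $\pi_S^{-1}$ recorded just above the statement. Since the second identity follows from the first by swapping the roles of $N$ and $S$ (equivalently, by flipping the sign of the last coordinate, which is precisely how the excerpt obtained the formula for $\pi_S$ from that for $\pi_N$), it suffices to prove the first identity $\pi_N^{-1}(x) = \pi_S^{-1}\big(x/|x|^2\big)$ for $x \in \R^n \setminus \{0\}$.

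First I would set $y := x/|x|^2$ and compute $|y|^2 = 1/|x|^2$. Then I would plug $y$ into the formula
\[
\pi_S^{-1}(y) = \Big( \frac{2y}{|y|^2 + 1}, -\frac{|y|^2 - 1}{|y|^2 + 1} \Big).
\]
For the first $n$ coordinates, $\dfrac{2y}{|y|^2+1} = \dfrac{2x/|x|^2}{1/|x|^2 + 1} = \dfrac{2x}{1 + |x|^2}$, which matches the first block of $\pi_N^{-1}(x)$. For the last coordinate, $-\dfrac{|y|^2 - 1}{|y|^2 + 1} = -\dfrac{1/|x|^2 - 1}{1/|x|^2 + 1} = -\dfrac{1 - |x|^2}{1 + |x|^2} = \dfrac{|x|^2 - 1}{|x|^2 + 1}$, which matches the last coordinate of $\pi_N^{-1}(x)$. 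Hence $\pi_S^{-1}\big(x/|x|^2\big) = \pi_N^{-1}(x)$, as claimed.

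For the second identity, I would either repeat the same computation with the sign of the last coordinate reversed, or simply observe that applying the first identity with $x$ replaced by $x/|x|^2$ (noting $(x/|x|^2)/|x/|x|^2|^2 = x$) together with the north–south symmetry gives $\pi_S^{-1}(x) = \pi_N^{-1}\big(x/|x|^2\big)$ directly. Either way, both statements hold on $\R^n \setminus \{0\}$, where the inversion $x \mapsto x/|x|^2$ is well defined.

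There is essentially no obstacle here: the lemma is a bookkeeping identity that says stereographic projection from the north pole, followed by the inversion $x \mapsto x/|x|^2$ in $\R^n$, equals stereographic projection from the south pole. The only point requiring the slightest care is keeping track of the sign of the $(n+1)$st coordinate under the inversion, which is exactly what makes the inversion exchange the two projections rather than fix each one. This geometric observation — that inversion in $\R^n$ corresponds, under stereographic projection, to the reflection of $\S^n$ across the equator — is the conceptual content, and it is what will later be used to pass between expansions at the north and south poles when analyzing the growth of $u$ in \eqref{def-u}.
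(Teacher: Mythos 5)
Your proof is correct and follows exactly the route the paper indicates: direct substitution into the explicit formulas for $\pi_N^{-1}$ and $\pi_S^{-1}$, with the second identity obtained by symmetry. The paper simply states that the identities "follow from the above expressions" and leaves the computation to the reader; you have supplied precisely that computation.
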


\begin{proof}
These identities follows from the above expressions for $\pi_N$ and $\pi_S$. 

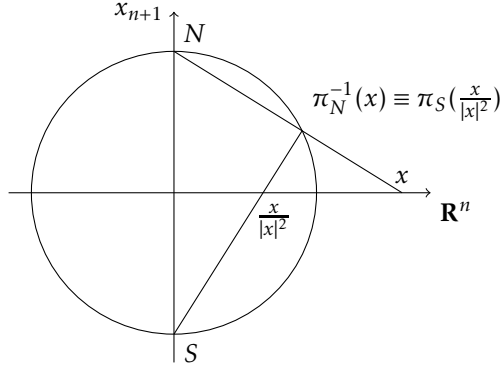
\begin{figure}[H]
\begin{tikzpicture}[scale=0.75]
\draw[->] (-2.9, 0) -- (4.5,0) node[anchor =north west] {$\R^n$};
\draw[->] (0,-3) -- (0,3.2) node[anchor =east] {$x_{n+1}$};
\draw[-] (0,0) circle (2.5);
\draw[-] (0, 2.5) node[anchor = south west] {$N$} -- (4,0) node[anchor =south] {$x$} ;
\draw[-] (0, -2.5) node[anchor = north west] {$S$} -- (2.25,1.1) node[anchor =south west] {$\pi_N ^{-1}(x) \equiv \pi_S (\frac x{|x|^2})$} ;
\node at (1.75,-0.5) {$\frac {x}{|x|^2}$};
\end{tikzpicture}
\caption[]{Relation between $\pi_N^{-1}$ and $\pi_S$.}\label{fig-SP}
\end{figure}

We leave the details for interested readers; also see Figure \ref{fig-SP} above.
\end{proof}


\subsection{From differential equations to integral equations}

Let $v$ be a positive, smooth solution to \eqref{eq-MAIN}$_\ve$. Recall from \eqref{eq-MAIN-DE}$_\varepsilon$ that the projected function $u$, defined by \eqref{def-u}, solves
\[
(-\Delta)^m u=\Q F_{\ve,u} \quad \text{in } \R^n.
\]
The main result of this subsection is to show that $u$ actually solves the corresponding integral equation \eqref{IEn}. To achieve this goal, we need certain preparation including the introduction of a uniform constant that we are going to describe now.

Since $n$ is an odd integer, for some dimensional constant $c_{2m,n} \ne 0$ we have 
\[
(-\Delta)^m \big( c_{2m,n}|x|^{2m-n} \big) =\delta_0,
\]
where $\delta_0$ is the Dirac measure at the origin. For convenience, we also set
\[
\gamma_{2m,n}:=c_{2m,n}\Q .
\]
For simplicity, throughout the paper, we often denote by $C$ a generic constant whose value could vary from estimate to estimate. We now state our main result in this subsection.

\begin{theorem} \label{thm-PDE-IE} 
We have \[ \gamma_{2m,n}>0 \] and 
\begin{align} \label{IEn} 
u(x)=\gamma_{2m,n}\int_{\R^n} |x-y|^{2m-n} F_{\ve,u}(y)dy
\end{align} 
where $F_{\ve, u}$ is given by \eqref{def-F}.
\end{theorem}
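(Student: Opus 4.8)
The plan is to establish the integral representation \eqref{IEn} in two stages: first identify the sign of $\gamma_{2m,n}$, then show that $u$ coincides with the putative Riesz-type potential on the right-hand side by exploiting the exact growth $u(x) = O(|x|^{2m-n})$ at infinity, which follows from \eqref{def-u} and the smoothness and positivity of $v$ on $\S^n$.

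First I would pin down the sign of $\gamma_{2m,n} = c_{2m,n}\,\Q$. Since $\Q = \tfrac{n-2m}{2}\cdot\tfrac{2}{n-2m}\gjms(1)$ has a definite sign (recall $2m>n$, so $(n-2m)/2<0$, while $Q_n^{2m} = \tfrac{2}{n-2m}\Gamma(n/2+m)/\Gamma(n/2-m)$), and the sign of the dimensional constant $c_{2m,n}$ in $(-\Delta)^m(c_{2m,n}|x|^{2m-n})=\delta_0$ is classical (it is the inverse of the normalization constant in the polyharmonic Green function on odd-dimensional $\R^n$ in the range $2m>n$), the product works out to be positive. Concretely, I would compute $(-\Delta)^m|x|^{2m-n}$ directly: for $n$ odd and $2m>n$ the function $|x|^{2m-n}$ is a positive power, so applying $-\Delta$ repeatedly produces explicit rational coefficients times $|x|^{2m-n-2k}$, and the final application hits the fundamental-solution regime; tracking the sign of the accumulated constant against the sign of $\Q$ gives $\gamma_{2m,n}>0$. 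Alternatively, and more cleanly, I would argue the sign a posteriori: once \eqref{IEn} is known to hold with \emph{some} constant, the left side $u>0$ and, under the standing hypotheses $n=2m-1$ or more generally using $F_{\ve,u}\geq 0$ (both terms in \eqref{def-F} are nonnegative since $u>0$, $\ve\geq 0$), the integral on the right is strictly positive, forcing $\gamma_{2m,n}>0$.

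For the representation itself, set $w(x) := \gamma_{2m,n}\int_{\R^n}|x-y|^{2m-n}F_{\ve,u}(y)\,dy$ and consider $h := u - w$. The integrability of $F_{\ve,u}$ near infinity is what makes $w$ well-defined: from \eqref{def-F}, the first term decays like $|y|^{-4m}|u(y)| \sim |y|^{-4m}|y|^{2m-n} = |y|^{-2m-n}$, and the second like $|y|^{-(n+2m)-\alpha(n-2m)}|u(y)|^{-\alpha} \sim |y|^{-(n+2m)}$ using $u(y)\sim c|y|^{2m-n}$; both are integrable at infinity (and locally integrable since $u>0$ is smooth and bounded below on compacts, so $u^{-\alpha}$ is too, while $v$ smooth means $u$ is smooth away from where the conformal factor degenerates, but that point is handled by working on $\S^n$). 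By construction $(-\Delta)^m w = \Q F_{\ve,u} = (-\Delta)^m u$ in the distributional sense, so $h$ is polyharmonic: $(-\Delta)^m h = 0$ on $\R^n$, hence $h$ is a polynomial. The key growth estimate is that $w(x) = O(|x|^{2m-n})$ as $|x|\to\infty$ — this is the standard decomposition of the potential into $|y|\leq |x|/2$, $|y|\geq 2|x|$, and the annulus, using the integrability just established — and since $u(x)=O(|x|^{2m-n})$ with $2m-n<2m$, the polynomial $h$ has degree $\leq 2m-n < 1$ if $2m-n$ is not... here I must be careful: $2m-n$ can be any positive odd integer, so $h$ is a priori a polynomial of degree $\leq 2m-n$. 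To kill the nontrivial part I would translate back to $\S^n$: the function $v = (2/(1+|x|^2))^{-(n-2m)/2}(u\circ\pi)$ extends smoothly across the pole, and a polynomial perturbation $h$ of the prescribed growth either destroys this smooth extension or, by the conformal covariance identity for $\gjms$, corresponds to adding to $v$ a function in the kernel of $\gjms$ on $\S^n$ with a prescribed singularity at one point — and smoothness forces $h\equiv 0$. Equivalently, one shows directly that $w$ itself, pulled back, gives a function on $\S^n$ solving the same equation with the same boundary behavior at the pole, and uniqueness of the smooth solution (which is $v$) forces $u=w$.

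The main obstacle will be the last point: ruling out the nonconstant polyharmonic correction $h$. The clean way, and the one I expect the authors take given the emphasis on "exploiting nice structures on $\S^n$ and properties of the stereographic projection" and Lemma \ref{lem-Pi_N=Pi_S}, is to use \emph{both} stereographic projections $\pi_N$ and $\pi_S$: the growth $|x|^{2m-n}$ at infinity in the $\pi_N$-chart translates, via $\pi_N^{-1}(x)=\pi_S^{-1}(x/|x|^2)$ and the conformal weight, into a precise regularity/decay statement at the origin in the $\pi_S$-chart; a genuine polynomial $h$ of degree up to $2m-n$ would violate this, since under the Kelvin-type transform $x\mapsto x/|x|^2$ combined with the factor $|x|^{-(n-2m)}$ a polynomial of degree $d$ picks up a singularity unless $d$ matches the weight exactly, and matching the full polynomial is incompatible with $h$ arising from the difference of two functions each smooth on the sphere. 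I would therefore structure the proof around this symmetry: establish $w = O(|x|^{2m-n})$, deduce $h$ is a polynomial of controlled degree, then apply the two-chart / inversion argument to conclude $h\equiv 0$, and finally read off $\gamma_{2m,n}>0$ from positivity of $u$ and nonnegativity of $F_{\ve,u}$.
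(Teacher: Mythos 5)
Your proposal correctly identifies the key structural idea of the paper's proof: exploit both stereographic charts and the Kelvin-type inversion $x \mapsto x/|x|^2$ to rule out the polyharmonic polynomial discrepancy $h := u - w$ of degree at most $2m-n$, and the citation of \cite{Martinazzi} for the Liouville step is the right one. Your sign argument for $\gamma_{2m,n}$ is also consistent with the paper's, which uses both the direct observation ($c_{2m,n}>0$ and $Q_n^{2m}>0$ for $n$ odd, $n<2m$) and, as a cross-check, the positivity of the integral identity obtained from $v\equiv 1$.

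The gap is in the step where you claim to kill $h$. You assert that the Kelvin transform $|x|^{2m-n}h(x/|x|^2)$ either picks up a singularity or is incompatible with $h$ arising from smooth data on $\S^n$, but $w$ is defined as a Riesz potential on $\R^n$, not as the projection of a spherical object; it is not a priori meaningful to ``pull back $w$ to the sphere,'' and you have not verified the crucial fact that the Kelvin transform of the Riesz potential built in the $\pi_N$-chart coincides with the Riesz potential built in the $\pi_S$-chart. That is exactly the paper's Lemma \ref{lem-tuNtuS*}: by the substitution $y = z/|z|^2$, the elementary identity $|x/|x|^2 - z/|z|^2| = |x-z|/(|x||z|)$, and the scaling relation $F_{\ve,u_N}(z/|z|^2) = |z|^{2m+n}F_{\ve,u_S}(z)$, one obtains $\widetilde u_S(x) = |x|^{2m-n}\,\widetilde u_N(x/|x|^2)$. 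Only then does one get the algebraic identity $P_S(x) = |x|^{2m-n}P_N(x/|x|^2)$ relating \emph{two polynomials} of degree at most $2m-n$, which, since $2m-n$ is odd, forces $P_N\equiv P_S\equiv 0$. Without this lemma your inversion argument is not closed. Moreover, your suggested alternative route — ``uniqueness of the smooth solution (which is $v$) forces $u=w$'' — is circular in this context: uniqueness of positive smooth solutions to \eqref{eq-MAIN}$_\ve$ is essentially the content of Theorem \ref{thm-main}, and Theorem \ref{thm-PDE-IE} is a prerequisite for its proof.
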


Notice that the integral in \eqref{IEn} is well-defined everywhere in $\R^n$. Indeed, as $v$ is positive everywhere on $\S^n$, we have from \eqref{def-u} that $u(x)\approx |x|^{2m-n}$ for $|x|\gg 1$, and hence 
\begin{align} \label{est-infty}
(1+|x|^{2m-n})F_{\ve,u}(x)\leq \frac{C}{1+|x|^{2n}}.
\end{align}
In order to prove the above theorem we define the following functions associated with the projections $\pi_N$ and $\pi_S$: 
\[
u_N (x): = \big( \frac {1+|x|^2}2 \big)^{\frac {2m-n}2} (v \circ \pi_N^{-1}) (x)
\]
and
\[
u_S (x) := \big( \frac {1+|x|^2}2 \big)^{\frac {2m-n}2} (v \circ \pi_S^{-1}) (x)
\]
in $\R^n$. In view of the integral equation \eqref{IEn}, we denote
\[
\widetilde u_N (x) := \gamma_{2m,n} \int_{\R^n} |x-y|^{2m-n} F_{\ve, u_N}(y) dy
\]
and
\[
\widetilde u_S (x) := \gamma_{2m,n} \int_{\R^n} |x-y|^{2m-n} F_{\ve, u_S} (y) dy
\]
in $\R^n$. Our aim is to show that $u_N \equiv \widetilde u_N$ and that $\gamma_{2m,n}>0$. This will be done through several steps. Our first observation is as follows.

\begin{lemma}\label{lem-uNuS*}
We have
\[
u_S (x) = |x|^{2m-n} u_N \big( \frac x{|x|^2} \big) , \quad
u_N (x) = |x|^{2m-n} u_S \big( \frac x{|x|^2} \big)
\]
in $\R^n\setminus\{0\}$.
\end{lemma}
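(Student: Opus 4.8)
The statement to prove is Lemma \ref{lem-uNuS*}, which gives the relation
\[
u_S (x) = |x|^{2m-n} u_N \big( \tfrac x{|x|^2} \big) , \qquad
u_N (x) = |x|^{2m-n} u_S \big( \tfrac x{|x|^2} \big)
\]
in $\R^n\setminus\{0\}$.

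\begin{proof}[Proof proposal]
The plan is a direct computation starting from the definitions of $u_N$ and $u_S$ together with the key relation between the two stereographic projections established in Lemma \ref{lem-Pi_N=Pi_S}. By symmetry (the roles of $N$ and $S$ are interchangeable, and $x \mapsto x/|x|^2$ is an involution on $\R^n\setminus\{0\}$), it suffices to prove the first identity; the second then follows by swapping $N \leftrightarrow S$ and replacing $x$ by $x/|x|^2$.

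First I would write, by definition,
\[
u_S(x) = \Big( \frac{1+|x|^2}{2} \Big)^{\frac{2m-n}{2}} (v\circ \pi_S^{-1})(x).
\]
Then I would invoke Lemma \ref{lem-Pi_N=Pi_S}, which gives $\pi_S^{-1}(x) = \pi_N^{-1}(x/|x|^2)$, so that $(v\circ\pi_S^{-1})(x) = (v\circ\pi_N^{-1})(x/|x|^2)$. Now I substitute the definition of $u_N$ evaluated at the point $x/|x|^2$: since
\[
u_N\Big(\frac{x}{|x|^2}\Big) = \Big( \frac{1 + |x/|x|^2|^2}{2} \Big)^{\frac{2m-n}{2}} (v\circ\pi_N^{-1})\Big(\frac{x}{|x|^2}\Big),
\]
I solve for $(v\circ\pi_N^{-1})(x/|x|^2)$ and substitute it into the expression for $u_S(x)$. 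The whole lemma then reduces to the elementary scalar identity
\[
\Big( \frac{1+|x|^2}{2} \Big)^{\frac{2m-n}{2}} = |x|^{2m-n} \Big( \frac{1 + |x|^{-2}}{2} \Big)^{\frac{2m-n}{2}},
\]
which holds because $1 + |x|^{-2} = (1+|x|^2)/|x|^2$, hence $\big((1+|x|^{-2})/2\big)^{(2m-n)/2} = |x|^{-(2m-n)}\big((1+|x|^2)/2\big)^{(2m-n)/2}$. Collecting the factors yields exactly $u_S(x) = |x|^{2m-n} u_N(x/|x|^2)$.

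There is essentially no obstacle here: the content of the lemma is packaged entirely in Lemma \ref{lem-Pi_N=Pi_S}, and what remains is a one-line algebraic manipulation of the conformal weights $\big((1+|x|^2)/2\big)^{(2m-n)/2}$ under the inversion $x\mapsto x/|x|^2$. The only point requiring a word of care is that the identity is stated on $\R^n\setminus\{0\}$, where the inversion is well-defined and $|x|^{2m-n}$ makes sense; since $v$ is smooth and positive on all of $\S^n$, both $u_N$ and $u_S$ are smooth away from the origin and there is no issue with the substitution.
\end{proof}
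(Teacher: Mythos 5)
Your proposal is correct and follows essentially the same route as the paper's proof: write out the definition of $u_S$, apply Lemma \ref{lem-Pi_N=Pi_S} to replace $\pi_S^{-1}(x)$ by $\pi_N^{-1}(x/|x|^2)$, and then absorb the factor $|x|^{2m-n}$ via the elementary identity $\big(\tfrac{1+|x|^2}{2}\big)^{(2m-n)/2} = |x|^{2m-n}\big(\tfrac{1+|x/|x|^2|^2}{2}\big)^{(2m-n)/2}$. The only superficial difference is that you derive the second identity by symmetry (swap $N\leftrightarrow S$ and replace $x$ by $x/|x|^2$) whereas the paper simply says it is verified similarly; both are fine.
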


\begin{proof}
This is elementary. Indeed, let us compute $u_S$. Clearly, with the help of Lemma \ref{lem-Pi_N=Pi_S}, we have
\begin{align*}
u_S (x) &= \big( \frac {1+|x|^2}2 \big)^{\frac {2m-n}2} v \big(\pi_S^{-1} (x) \big)\\
&=\big( \frac {1+|x|^2}2 \big)^{\frac {2m-n}2} v \big( \pi_N^{-1} \big( \frac x{|x|^2} \big) \big)\\
&=|x| ^{2m-n}\big( \frac {1+|x/|x|^2|^2}2 \big)^{\frac {2m-n}2} v \big( \pi_N^{-1} \big( \frac x{|x|^2} \big) \big) \\
& = |x|^{2m-n} u_N \big( \frac x{|x|^2} \big) ,
\end{align*}
which gives the desired formula for $u_S$. The identity for $u_N$ can be verified similarly.
\end{proof}

Our next observation is similar to that in Lemma \ref{lem-uNuS*}.

\begin{lemma}\label{lem-tuNtuS*}
We have 
\[
\widetilde u_S (x) = |x|^{2m-n} \widetilde u_N \big (\frac x{|x|^2} \big) , \quad
\widetilde u_N(x) = |x|^{2m-n} \widetilde u_S \big (\frac x{|x|^2} \big)
\]
in $\R^n \setminus \{ 0\}$.
\end{lemma}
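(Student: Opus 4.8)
The plan is to exploit the Kelvin-type transform relating $u_N$ and $u_S$ established in Lemma~\ref{lem-uNuS*}, together with the behaviour of the Riesz kernel $|x-y|^{2m-n}$ under inversion. Concretely, I would start from the definition
\[
\widetilde u_S(x)=\gamma_{2m,n}\int_{\R^n}|x-y|^{2m-n}F_{\ve,u_S}(y)\,dy
\]
and substitute $y=z/|z|^2$. The Jacobian of the inversion $y\mapsto y/|y|^2$ on $\R^n$ is $|z|^{-2n}\,dz$, and one has the classical identity
\[
\Big|\frac{x}{|x|^2}-\frac{z}{|z|^2}\Big| = \frac{|x-z|}{|x|\,|z|},
\]
so that $|x/|x|^2 - z/|z|^2|^{2m-n} = |x|^{2m-n}|z|^{2m-n}|x-z|^{2m-n}$ when this is evaluated with $x$ replaced by $x/|x|^2$. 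The key computational point is to check that the prefactors assemble correctly: by the explicit form \eqref{def-F} of $F_{\ve,u}$ and the relation $u_S(z)=|z|^{2m-n}u_N(z/|z|^2)$ from Lemma~\ref{lem-uNuS*}, one should verify the covariance identity
\[
F_{\ve,u_S}\Big(\frac{z}{|z|^2}\Big) = |z|^{2n}\,|z|^{-2(2m-n)}\cdot\big(\text{something}\big)\cdot F_{\ve,u_N}(z),
\]
or more precisely that $F_{\ve,u_N}(z) = |z|^{-(2m+n)}F_{\ve,u_S}(z/|z|^2)$, which is exactly the conformal weight making $(-\Delta)^m$ and its inverse intertwine under inversion. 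This reduces to elementary algebra with the conformal factor $2/(1+|x|^2)$ and the observation that under $x\mapsto x/|x|^2$ one has $2/(1+|x/|x|^2|^2) = |x|^2\cdot 2/(1+|x|^2)\cdot|x|^{-2}$-type rearrangements; I would track the exponents $2m$, $(n+2m)/2$, and $\alpha(n-2m)/2$ separately for the two terms of $F_{\ve,u}$ and confirm they match.

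Carrying this out, I would write, for $x\neq 0$,
\[
\widetilde u_S(x) = \gamma_{2m,n}\int_{\R^n}\Big|x-\frac{z}{|z|^2}\Big|^{2m-n}F_{\ve,u_S}\Big(\frac{z}{|z|^2}\Big)\frac{dz}{|z|^{2n}},
\]
then replace $x$ by $x/|x|^2$ in the target identity $\widetilde u_N(x)=|x|^{2m-n}\widetilde u_S(x/|x|^2)$ and apply the inversion identity for $|x/|x|^2 - z/|z|^2|$ to pull out a factor $|x|^{2m-n}$, after which the $z$-integral collapses to $\gamma_{2m,n}\int_{\R^n}|x-z|^{2m-n}F_{\ve,u_N}(z)\,dz = \widetilde u_N(x)$ provided the covariance of $F$ from the previous paragraph holds. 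The two stated identities are equivalent to each other (apply one with $x$ replaced by $x/|x|^2$), so it suffices to prove one; I would prove $\widetilde u_N(x)=|x|^{2m-n}\widetilde u_S(x/|x|^2)$.

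The main obstacle I anticipate is purely bookkeeping: assembling the four sources of powers of $|z|$ — the Jacobian $|z|^{-2n}$, the kernel factor $|z|^{2m-n}$ (or its reciprocal, depending on which direction the substitution runs), the weight $|z|^{2m-n}$ relating $u_S$ to $u_N$, and the internal conformal weights hidden in the two summands of $F_{\ve,u}$ — and confirming they cancel to leave precisely the scalar $\gamma_{2m,n}$ and a clean factor $|x|^{2m-n}$. A secondary point worth a sentence is justifying the change of variables: by \eqref{est-infty} the integrand defining $\widetilde u_S$ is integrable (it decays like $|y|^{-2n}$ at infinity and the kernel is locally integrable since $2m-n>0$), so Fubini/Tonelli and the substitution are legitimate, and both sides are finite everywhere. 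Apart from that, no inequality or compactness input is needed here; this lemma is a clean conformal-symmetry statement parallel to Lemma~\ref{lem-uNuS*}, and its proof is a mirror image of that one with the Riesz potential inserted.
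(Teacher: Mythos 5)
Your proposal is essentially the paper's own proof: change variables $y=z/|z|^2$, apply the inversion identity $\big|x/|x|^2-z/|z|^2\big|=|x-z|/(|x||z|)$ to the Riesz kernel, and invoke the covariance of $F_{\ve,u}$ under inversion coming from Lemma \ref{lem-uNuS*} (the paper uses $F_{\ve,u_N}(z/|z|^2)=|z|^{2m+n}F_{\ve,u_S}(z)$, which is equivalent to the form you state after replacing $z$ by $z/|z|^2$), and the two displayed identities are indeed interchangeable by $x\mapsto x/|x|^2$. One small slip: you wrote $\big|x/|x|^2-z/|z|^2\big|^{2m-n}=|x|^{2m-n}|z|^{2m-n}|x-z|^{2m-n}$, which should be the quotient $|x-z|^{2m-n}/\big(|x|^{2m-n}|z|^{2m-n}\big)$; the subsequent bookkeeping you describe is correct once that is fixed.
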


\begin{proof}
This is also elementary but rather involved. Indeed, let us verify the first identity. With a change of variable $y=z/|z|^2$ and by  Lemma \ref{lem-uNuS*} we easily get
\begin{align*}
|x| ^{2m-n}\widetilde u_N \big (\frac x{|x|^2} \big)
&= \gamma_{2m,n}|x|^{2m-n}\int_{\R^n} \Big| \frac x{|x|^2} -y \Big| ^{2m-n} F_{\ve, u_N}(y) dy \\
&= \gamma_{2m,n}|x|^{2m-n}\int_{\R^n} \Big| \frac x{|x|^2} -\frac{z}{|z|^2} \Big| ^{2m-n} F_{\ve, u_N}(\frac{z}{|z|^2}) \frac{dz}{|z|^{2n}} \\
&= \gamma_{2m,n} \int_{\R^n} | x -z | ^{2m-n} F_{\ve, u_S}(z) dz \\
 &= \widetilde u_S (x),
\end{align*} where in the second last equality we have used the following facts:
\[
\left|\frac{x}{|x|^2}-\frac{z}{|z|^2} \right|=\frac{|x-z|}{|x||z|},\quad F_{\ve, u_N}(\frac{z}{|z|^2})=|z|^{2m+n}F_{\ve, u_S}(z).
\]
The second identity can be verified similarly.
\end{proof}

Now we are able to examine $u_N -\widetilde u_N$ and $u_S - \widetilde u_S$.

\begin{lemma}
The following functions 
\[
P_N := u_N - \widetilde u_N, \quad P_S := u_S - \widetilde u_S
\]
are polynomials in $\R^n$ of degree at most $2m-n$.
\end{lemma}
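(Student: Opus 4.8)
The plan is to show that $P_N$ and $P_S$ are entire functions which grow at most polynomially of degree $2m-n$, and then invoke the classical fact that an entire (poly)harmonic function with polynomial growth is a polynomial of the corresponding degree. The key point is that both $u_N$ and $\widetilde u_N$ solve the same differential equation $(-\Delta)^m w = \Q F_{\ve,u_N}$ in $\R^n$: for $u_N$ this is exactly \eqref{eq-MAIN-DE}$_\ve$ (with the identification $u = u_N$), while for $\widetilde u_N$ it follows by differentiating under the integral sign, using that $(-\Delta)^m(c_{2m,n}|x-y|^{2m-n}) = \delta_y$ together with the decay estimate \eqref{est-infty}, which guarantees $F_{\ve,u_N} \in L^1(\R^n)$ and makes the Newtonian-type potential well-defined and $2m$-times differentiable. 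Consequently $(-\Delta)^m P_N = 0$ in $\R^n$, i.e. $P_N$ is polyharmonic of order $m$, and a fortiori real-analytic; the same reasoning applies verbatim to $P_S$.

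First I would record that $u_N$ is smooth on $\R^n$ (being the pullback of a smooth function on $\S^n$ under $\pi_N^{-1}$, times a smooth positive weight) and hence so is $\widetilde u_N$, so $P_N \in C^\infty(\R^n)$; combined with polyharmonicity this gives real-analyticity. Next I would establish the growth bound. From \eqref{def-u} we have $u_N(x) \approx |x|^{2m-n}$ as $|x| \to \infty$, so $u_N$ grows like $|x|^{2m-n}$. For $\widetilde u_N$, I would split the integral $\int_{\R^n} |x-y|^{2m-n} F_{\ve,u_N}(y)\,dy$ over the regions $\{|y| \le |x|/2\}$, $\{|y - x| \le |x|/2\}$, and the complement; on the first and third regions $|x-y|^{2m-n} \lesssim (1+|x|)^{2m-n}(1+|y|)^{2m-n}$ (recall $2m - n > 0$) and \eqref{est-infty} makes the resulting integral finite, while on the middle region the singularity $|x-y|^{2m-n}$ is integrable since $2m - n > 0 > -n$ and $F_{\ve,u_N}$ is bounded there (by \eqref{est-infty} again, since $|y|\approx |x|$). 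This yields $|\widetilde u_N(x)| \le C(1+|x|)^{2m-n}$, hence $|P_N(x)| \le C(1+|x|)^{2m-n}$, and similarly for $P_S$.

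The conclusion then follows from the Liouville-type theorem for polyharmonic functions: if $(-\Delta)^m P = 0$ on $\R^n$ and $|P(x)| \le C(1+|x|)^k$, then $P$ is a polynomial of degree at most $\max\{k, 2m-2\}$ — but in fact, decomposing $P$ via the almost-Almansi / Gauss-type representation $P = \sum_{j=0}^{m-1} |x|^{2j} h_j$ with $h_j$ harmonic, each $h_j$ is forced by the growth bound to be a polynomial, and a degree count shows the total degree is at most $2m-n$ (here one uses $2m - n \le 2m - 2$ automatically since $n \ge 2$, consistent with the claimed bound). I expect the only genuinely delicate point to be justifying that $\widetilde u_N$ actually satisfies $(-\Delta)^m \widetilde u_N = \Q F_{\ve,u_N}$ in the classical (or distributional) sense — i.e. differentiating the convolution with the non-integrable-at-infinity but locally-integrable kernel $|x-y|^{2m-n}$ — which requires the decay \eqref{est-infty} to control tails and a standard mollification argument near the diagonal; everything else is routine estimation and the classical polyharmonic Liouville theorem.
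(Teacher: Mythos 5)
Your proposal is correct and takes essentially the same route as the paper: bound $\widetilde u_N$ (hence $P_N$) by $C(1+|x|^{2m-n})$ using \eqref{est-infty}, note $\Delta^m P_N=0$ since $u_N$ and $\widetilde u_N$ both solve \eqref{eq-MAIN-DE}$_\ve$, and invoke the Liouville theorem for polyharmonic functions of polynomial growth. The only difference is that the paper simply cites \cite[Theorem 5]{Martinazzi} for this last step, whereas you sketch a proof via the Almansi decomposition (and your intermediate ``degree at most $\max\{k,2m-2\}$'' formulation is not the sharp statement, though the refined bound of degree at most $2m-n$ you then extract is the correct one).
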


\begin{proof} 
Before proving, we see that both $P_N$ and $P_S$ are well-defined everywhere in $\R^n$. Now it follows from \eqref{est-infty} that the function $\widetilde u_N$ satisfies 
\[
\widetilde u_N(x)\leq C(1+|x|^{2m-n})\quad\text{for }x\in\R^n.
\]
This together with the growth of $u_N$ implies that $|P_N(x)|\leq C(1+|x|^{2m-n}) $. Since 
\[
\Delta^m P_N = \Delta^m u_N - \Delta^m \widetilde u_N = 0,
\]
we conclude that $P_N$ is a polynomial in $\R^n$ of degree at most $2m-n$; see \cite[Theorem 5]{Martinazzi}. A similar argument applies to $P_S$ yielding the same conclusion for $P_S$.
\end{proof}

Finally, we are in a position to prove Theorem \ref{thm-PDE-IE}, which simply follows from the next two lemmas. 

\begin{lemma}
There hold $u_N \equiv \widetilde u_N$ and $u_S \equiv \widetilde u_S$ everywhere. 
\end{lemma}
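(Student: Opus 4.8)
The plan is to show that the polynomials $P_N = u_N - \widetilde u_N$ and $P_S = u_S - \widetilde u_S$ both vanish identically. The key structural observation is the Kelvin-type inversion relations: by Lemma~\ref{lem-uNuS*} and Lemma~\ref{lem-tuNtuS*}, the functions $u_N, u_S$ transform into each other under $x \mapsto x/|x|^2$ with the weight $|x|^{2m-n}$, and so do $\widetilde u_N, \widetilde u_S$. Subtracting, we obtain $P_S(x) = |x|^{2m-n} P_N(x/|x|^2)$ on $\R^n \setminus \{0\}$. Since $P_N$ is a polynomial of degree at most $2m-n$, write $P_N(x) = \sum_{j=0}^{2m-n} Q_j(x)$ with $Q_j$ homogeneous of degree $j$. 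Then $|x|^{2m-n} P_N(x/|x|^2) = \sum_{j=0}^{2m-n} |x|^{2(m-n)+2j-2m+n}\cdots$ — more cleanly, $|x|^{2m-n}Q_j(x/|x|^2) = |x|^{2m-n-2j}Q_j(x)$, which is a genuine polynomial only when $j$ contributes a nonnegative even power of $|x|$, i.e.\ is forced to be a polynomial identity only if the "inverted" expression $\sum_j |x|^{2m-n-2j} Q_j(x)$ equals the polynomial $P_S(x)$. Matching the two sides, the terms with $2j > 2m-n$ would produce negative powers of $|x|$ unless the corresponding $Q_j$ vanish, and the requirement that the whole expression be a polynomial of degree $\le 2m-n$ pins down the structure severely.

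The cleanest route, though, is to use the growth and positivity to kill the polynomials directly. First I would record that $u_N$ is \emph{positive} everywhere (since $v>0$ on $\S^n$ and the conformal factor is positive), and similarly $\widetilde u_N \ge 0$ by \eqref{est-infty} and positivity of $\gamma_{2m,n}$; actually $\widetilde u_N > 0$ strictly since $F_{\ve,u_N}>0$. From \eqref{def-u} we know the \emph{exact} asymptotics $u_N(x) = |x|^{2m-n}\big(v(N) + o(1)\big)$ as $|x|\to\infty$ (evaluating the conformal factor and using continuity of $v$ at $\pi_N^{-1}(\infty) = N$), and likewise $\widetilde u_N(x) = \gamma_{2m,n}|x|^{2m-n}\int_{\R^n} F_{\ve,u_N}(y)\,dy + o(|x|^{2m-n})$ by dominated convergence in $|x-y|^{2m-n}/|x|^{2m-n} \to 1$ pointwise together with \eqref{est-infty}. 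Hence $P_N(x) = o(|x|^{2m-n})$ at infinity, so as a polynomial of degree $\le 2m-n$ it has degree $\le 2m-n-1$; but $2m-n$ is odd (as $n$ is odd and we may check parity) — more robustly, repeating the same asymptotic expansion to the next order, or simply invoking the inversion relation $P_S(x) = |x|^{2m-n}P_N(x/|x|^2)$: near $x=0$ the right side must stay bounded since $P_S$ is continuous at $0$, which forces $P_N$ to grow no faster than $|x|^{2m-n}$ — consistent — but combined with $\deg P_N \le 2m-n-1$ we get near $0$ that $|x|^{2m-n}P_N(x/|x|^2) = |x|^{2m-n}\sum_{j\le 2m-n-1}|x|^{-2j}Q_j$ has terms $|x|^{2m-n-2j}$; for this to be a polynomial ($= P_S$) we need $2m-n-2j \ge 0$ for every $j$ with $Q_j \ne 0$, i.e.\ $j \le (2m-n)/2$. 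Iterating the roles of $N$ and $S$ symmetrically forces all $Q_j$ with $j \ge 1$ to vanish, so $P_N \equiv Q_0$ is a constant; then $P_S(x) = |x|^{2m-n}Q_0$, which is a polynomial only if $Q_0 = 0$ (as $2m-n > 0$ is not an even integer in general, and in any case a nonzero multiple of $|x|^{2m-n}$ is not a polynomial since $2m-n$ is odd when $n$ is odd and $2m$ even). Therefore $P_N \equiv 0$ and $P_S \equiv 0$.

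I would then conclude $u_N \equiv \widetilde u_N$ and $u_S \equiv \widetilde u_S$, which is exactly the claim. Combined with the companion lemma giving $\gamma_{2m,n} > 0$ (established via evaluating the integral equation at a point, using positivity of $F_{\ve,u_N}$ and of $u_N$), Theorem~\ref{thm-PDE-IE} follows since $u = u_N$ (the projection $\pi$ in \eqref{def-u} being $\pi_N$).

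The main obstacle I expect is the bookkeeping in the inversion/parity argument: showing rigorously that a polynomial $P_N$ of degree $\le 2m-n$ satisfying $P_S(x) = |x|^{2m-n}P_N(x/|x|^2)$ with $P_S$ also a polynomial of the same degree must be identically zero. The slick way to handle it is to avoid the term-by-term matching entirely and instead use the sharp asymptotics: $u_N(x)/|x|^{2m-n} \to v(N)$ and $\widetilde u_N(x)/|x|^{2m-n} \to \gamma_{2m,n}\|F_{\ve,u_N}\|_{L^1}$, so $P_N(x) = o(|x|^{2m-n})$; then the \emph{same} exact-asymptotics computation applied after inversion (or equivalently, testing the relation near the origin) forces the leading-order coefficient and, inductively, all coefficients to vanish. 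An alternative, perhaps cleaner, is to observe that if $P_N \not\equiv 0$ then $P_N$ attains arbitrarily large positive or negative values, contradicting either $u_N = \widetilde u_N + P_N > 0$ (if $P_N \to -\infty$ in some direction faster than $\widetilde u_N$ grows) or a symmetric statement for $P_S$ using the inversion — the positivity of both $u_N$ and $u_S$ everywhere is the crucial rigidity that a nonzero polynomial of controlled degree cannot respect once one inverts.
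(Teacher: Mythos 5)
Your core argument — using Lemmas~\ref{lem-uNuS*} and \ref{lem-tuNtuS*} to get $P_S(x)=|x|^{2m-n}P_N(x/|x|^2)$ and then invoking the oddness of $2m-n$ to force both polynomials to vanish — is exactly the paper's approach, which does it in one step: writing $P_N=\sum_j Q_j$ in homogeneous parts, the relation gives $P_S(x)=\sum_j|x|^{2m-n-2j}Q_j(x)$ with distinct degrees $2m-n-j$, so each term must itself be a homogeneous polynomial, and since $2m-n-2j$ is always odd this forces every $Q_j\equiv0$. Two points in your write-up are looser than they need to be: the claim ``$P_N(x)=o(|x|^{2m-n})$'' does not follow from the two asymptotics alone — you only get $P_N(x)=(c-c')|x|^{2m-n}+o(|x|^{2m-n})$, and you still need oddness of $2m-n$ (via $Q_{2m-n}(-\omega)=-Q_{2m-n}(\omega)$) to kill the leading coefficient, so this ``cleaner route'' is really the same argument with an extra detour; and the ``iterating $N\leftrightarrow S$'' step is unnecessary and mis-stated (it would give $P_N\equiv 0$ directly, not ``$P_N\equiv Q_0$ a constant''), since the single inversion relation already eliminates all $Q_j$ at once because each $|x|^{2m-n-2j}$ is an odd power of $|x|$ and hence not a polynomial regardless of sign.
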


\begin{proof}
As
\[
u_S (x) = |x|^{2m-n} u_N \big( \frac x{|x|^2} \big) , \quad 
\widetilde u_S (x) = |x|^{2m-n} \widetilde u_N \big (\frac x{|x|^2} \big) 
\]
we obtain
\begin{align*}
P_S (x) = |x|^{2m-n} P_N \big( \frac x{|x|^2} \big) ,
\end{align*}
which is a polynomial (of degree at most $2m-n$). Surely, as $n$ is odd, this is impossible because $|x|^{2m-n}$ cannot be a polynomial unless $P_N \equiv P_S \equiv 0$, which implies that $u_N \equiv \widetilde u_N$ and $u_S \equiv \widetilde u_S$. This completes the proof.
\end{proof}

\begin{lemma}
There hold $\gamma_{2m,n}>0$.
\end{lemma}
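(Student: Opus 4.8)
The goal is to show $\gamma_{2m,n}>0$, equivalently $c_{2m,n} Q_n^{2m}>0$, i.e. the sign of $c_{2m,n}$ agrees with the sign of $Q_n^{2m}=\tfrac{2}{n-2m}\tfrac{\Gamma(n/2+m)}{\Gamma(n/2-m)}$. Having already proved $u_N\equiv\widetilde u_N$, I would exploit the fact that $u_N$ is strictly positive on $\R^n$ (since $v>0$ on $\S^n$ and the conformal factor is positive). The plan is to evaluate the integral identity $u_N(x)=\gamma_{2m,n}\int_{\R^n}|x-y|^{2m-n}F_{\ve,u_N}(y)\,dy$ at a convenient point and argue by sign. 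Because $0<2m-n$ and $u_N>0$, the function $F_{\ve,u_N}$ is pointwise positive everywhere (both terms in \eqref{def-F} are positive, using $\ve\geq 0$), and by \eqref{est-infty} the integral $\int_{\R^n}|x-y|^{2m-n}F_{\ve,u_N}(y)\,dy$ converges and is a strictly positive real number for every $x$. Since the left-hand side $u_N(x)$ is also strictly positive, dividing gives $\gamma_{2m,n}=u_N(x)\big/\!\int_{\R^n}|x-y|^{2m-n}F_{\ve,u_N}(y)\,dy>0$ immediately.

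\textbf{Key steps, in order.} First, record that $F_{\ve,u_N}>0$ pointwise on $\R^n$: this is clear from \eqref{def-F} since $u_N>0$, the weights $\bigl(2/(1+|x|^2)\bigr)^{2m}$ and $\bigl(2/(1+|x|^2)\bigr)^{(n+2m)/2+\alpha(n-2m)/2}$ are positive, $\ve\geq 0$, and $u_N^{-\alpha}>0$. Second, observe that $|x-y|^{2m-n}>0$ for $x\neq y$ (and $2m-n\geq 1$ since $n$ is odd and $2m>n$), and that the integral converges by the decay estimate \eqref{est-infty}, which controls $F_{\ve,u_N}(y)$ by $C(1+|y|^{2n})^{-1}$ while $|x-y|^{2m-n}$ grows only polynomially of degree $2m-n<2n$ at infinity; near $y=x$ the kernel is locally integrable since $2m-n>-n$. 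Hence for any fixed $x$, say $x=0$, the quantity $I:=\int_{\R^n}|y|^{2m-n}F_{\ve,u_N}(y)\,dy$ is a finite, strictly positive number. Third, by Theorem \ref{thm-PDE-IE}'s identity (just established as $u_N\equiv\widetilde u_N$), $u_N(0)=\gamma_{2m,n}\,I$. Since $u_N(0)>0$ and $I>0$, we conclude $\gamma_{2m,n}=u_N(0)/I>0$.

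\textbf{Expected main obstacle.} There is essentially no analytic obstacle here once $u_N\equiv\widetilde u_N$ is in hand; the only point requiring a modicum of care is making sure the integral $I$ is genuinely finite and positive rather than divergent or degenerate — but this is exactly what \eqref{est-infty} guarantees, together with local integrability of $|y|^{2m-n}$ near the origin. One should also note that the argument does not need to identify the explicit value or sign of $c_{2m,n}$ in isolation; the positivity of $\gamma_{2m,n}=c_{2m,n}Q_n^{2m}$ is forced by the positivity of $u_N$ and of the right-hand side, which is a cleaner route than computing $c_{2m,n}$ via Fourier transform or fundamental-solution formulas and separately checking the sign of $Q_n^{2m}$. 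If one preferred an independent verification, one could alternatively compute $c_{2m,n}$ from the known fundamental solution of $(-\Delta)^m$ in odd dimension $n<2m$ and confront it with the Gamma-function expression for $Q_n^{2m}$, but the positivity argument above makes that unnecessary.
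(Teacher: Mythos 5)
Your proof is correct, and it is essentially the same as the paper's second (``can also be seen from'') argument: both deduce $\gamma_{2m,n}>0$ from the positivity of the left-hand side and of the kernel and integrand in the representation formula $u(x)=\gamma_{2m,n}\int_{\R^n}|x-y|^{2m-n}F_{\ve,u}(y)\,dy$. The only cosmetic difference is that the paper instantiates the formula at the explicit solution $v\equiv 1$ (so that $u(x)=\bigl(\tfrac{2}{1+|x|^2}\bigr)^{(n-2m)/2}$ and $F$ reduces to a pure weight), whereas you use the general solution $u_N$; both choices work because $u_N\equiv\widetilde u_N$ has just been established for arbitrary positive smooth $v$. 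It is worth noting that the paper's \emph{primary} argument in this lemma — reading the sign directly off $\gamma_{2m,n}=c_{2m,n}\,\tfrac{n-2m}{2}Q_n^{2m}$ and asserting $c_{2m,n}>0$, $Q_n^{2m}>0$ — is considerably more delicate (the proof as written drops the factor $\tfrac{n-2m}{2}$, and $Q_n^{2m}$ does not actually have a fixed sign across all admissible odd $n<2m$, e.g.\ $Q_3^6=-105/32<0$), so your choice to argue via positivity of the integral rather than via separate sign-checking of $c_{2m,n}$ and $Q_n^{2m}$ is in fact the cleaner and more robust route, exactly as you anticipated in your final remark.
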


\begin{proof}
The claim $\gamma_{2m,n}>0$ follows trivially by seeing its definition
\[
\gamma_{2m,n} = c_{2m,n} Q_n^{2m}.
\]
Note that $Q_n^{2m} >0$ and that $c_{2m,n}>0$ because $n<2m$ and $n$ is odd. However, the claim can also be seen from the fact that $v\equiv 1$ is a solution to \eqref{eq-MAIN}$_0$. More precisely, making use of $v \equiv 1$ and \eqref{def-u} one has the following identity
\[
\big( \frac{2}{1+|x|^2}\big)^\frac{n-2m}{2} =\gamma_{2m,n}\int_{\R^n} |x-y|^{2m-n}\big( \frac{2}{1+|y|^2}\big)^\frac{n+2m}{2} dy
\]
everywhere in $\R^n$.
\end{proof}

We conclude this subsection by noting that our approach to prove Theorem \ref{thm-PDE-IE} can be used for the case of equations with positive exponent. For example, without using any super polyharmonic property, as in \cite{CLS}, our new approach offers a very simple and straightforward proof to convert differential equations on $\S^n$ to the corresponding integral equations on $\R^n$, detail will appear elsewhere.


\subsection{Pohozaev-type identity}
 
Our last auxiliary result is a Pohozaev-type identity, which shall be used in the proof of a compactness type result; see section \ref{sec-compact} below. For simplicity, we let
\begin{equation}\label{c_alpha}
c_\alpha:=\alpha \frac{2m-n}2 - \frac{2m+n}{2} \leq 0.
\end{equation} 
 
For future usage, let us state our Pohozaev-type identity in a more general framework.

\begin{lemma}\label{Poho-Rn} 
Let $Q\in C^1(\R^n)$ be such that 
\[
|Q(x)|\lesssim (1+|x|)^{-n+(\alpha-1)(2m-n) -\delta},
\]
for some $\delta>0$. Let $u$ be a positive, regular solution to 
\begin{align}\label{int-u-Q}
u(x)=\int_{\R^n}|x-y|^{2m-n}Q(y)u^{-\alpha}(y)dy,
\end{align}
where $u$ satisfies 
\[
u\gtrsim (1+|x|)^{2m-n} \quad \text{if } \alpha>1
\] 
and that 
\[
u\lesssim (1+|x|)^{2m-n} \quad \text{if } 0<\alpha<1.
\] 
Then, for $\alpha\neq 1$, there holds
\[
\int_{\R^n} (x\cdot\nabla Q)u^{1-\alpha}dx=c_\alpha \int_{\R^n}Q u^{1-\alpha}dx,
\] 
provided $(x\cdot\nabla Q)u^{1-\alpha}\in L^1(\R^n)$. 
\end{lemma}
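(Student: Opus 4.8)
The plan is to derive the Pohozaev identity from the integral equation \eqref{int-u-Q} by testing against the scaling vector field $x \cdot \nabla u$, exploiting the homogeneity of the kernel $|x-y|^{2m-n}$. The starting point is the classical observation that for the fundamental-type kernel $K(x,y) = |x-y|^{2m-n}$ one has the Euler relation $(x\cdot\nabla_x + y\cdot\nabla_y)K(x,y) = (2m-n)K(x,y)$. Differentiating \eqref{int-u-Q}, I would compute $x\cdot\nabla u(x)$ and integrate the resulting expression against $u^{-\alpha}(x)$ over a large ball $B_R$, then symmetrize the double integral in $x$ and $y$. The growth hypotheses on $u$ (lower bound $(1+|x|)^{2m-n}$ when $\alpha>1$, upper bound when $0<\alpha<1$) together with the decay hypothesis on $Q$ are exactly what is needed so that $(x\cdot\nabla Q)u^{1-\alpha}\in L^1$ is a meaningful assumption and the boundary terms on $\partial B_R$ vanish as $R\to\infty$.

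Concretely, the key steps in order are as follows. First, write $\int_{\R^n}(x\cdot\nabla u)\,Q\,u^{-\alpha}\,dx = \int\!\!\int (x\cdot\nabla_x K(x,y))\,Q(y)u^{-\alpha}(y)u^{-\alpha}(x)\,dy\,dx$ and use the Euler relation to replace $x\cdot\nabla_x K$ by $(2m-n)K - y\cdot\nabla_y K$. Second, on the term containing $y\cdot\nabla_y K$, integrate by parts in $y$: this moves the derivative onto $Q(y)u^{-\alpha}(y)$, producing a term with $y\cdot\nabla Q$, a term with $(y\cdot\nabla u)u^{-\alpha-1}$, and a factor $u^{-\alpha}(x)$; reassembling and using \eqref{int-u-Q} again identifies the term with $y\cdot\nabla u$ as (a multiple of) $\int (x\cdot\nabla u)\,Q\,u^{-\alpha}\,dx$ with the roles of $x$ and $y$ swapped, hence equal by relabeling. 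Third, separately compute $\int (x\cdot\nabla u)\,Q\,u^{-\alpha}\,dx$ directly: since $\nabla(u^{1-\alpha}) = (1-\alpha)u^{-\alpha}\nabla u$ when $\alpha\neq 1$, we get $Q\,u^{-\alpha}(x\cdot\nabla u) = \frac{1}{1-\alpha}Q\,(x\cdot\nabla u^{1-\alpha})$, and an integration by parts in $x$ turns this into $\frac{1}{1-\alpha}\big(-n\int Q u^{1-\alpha} - \int (x\cdot\nabla Q)u^{1-\alpha}\big)$. Finally, equate the two evaluations of the same quantity and solve the resulting linear relation for $\int(x\cdot\nabla Q)u^{1-\alpha}\,dx$; a bookkeeping check of the coefficients should produce exactly $c_\alpha = \alpha\frac{2m-n}{2} - \frac{2m+n}{2}$, matching \eqref{c_alpha}.

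The main obstacle will be the careful justification of the boundary terms and the Fubini manipulations rather than the algebra. One has to verify that each integration by parts on $B_R$ generates a surface integral that tends to $0$ as $R\to\infty$; this is where the precise matching of the decay of $Q$ against the growth of $u$ enters, and it is delicate because $u$ itself is only controlled from one side (below if $\alpha>1$, above if $\alpha<1$), so one must pair each potentially dangerous factor of $u$ with the right exponent. A clean way to organize this is to first establish, from \eqref{int-u-Q} and the hypotheses, a two-sided asymptotic $u(x)\sim c\,|x|^{2m-n}$ at infinity together with a matching gradient bound $|\nabla u(x)|\lesssim |x|^{2m-n-1}$ (differentiating under the integral sign, which is legitimate given the kernel and the decay of $Qu^{-\alpha}$ from \eqref{est-infty}-type estimates), and then feed these into the boundary-term estimates. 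I would also treat the cases $\alpha>1$ and $0<\alpha<1$ in parallel, noting that the sign of $1-\alpha$ flips but the final identity is unchanged, and remark that the borderline $\alpha=1$ is genuinely excluded because the substitution $u^{-\alpha}\nabla u = (1-\alpha)^{-1}\nabla u^{1-\alpha}$ degenerates there (corresponding to the appearance of $\log u$, which is handled separately for the log-Sobolev inequality).
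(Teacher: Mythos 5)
Your proposal is correct but takes a genuinely different route from the paper. Both proofs exploit the homogeneity and $(x,y)$-symmetry of the kernel $K(x,y)=|x-y|^{2m-n}$, but they package it differently. The paper uses the observation $x=\tfrac12(x+y)+\tfrac12(x-y)$ to write
\[
x\cdot\nabla_x K = \frac{2m-n}{2}K + \frac{2m-n}{2}\,\frac{|x|^2-|y|^2}{|x-y|^{n+2-2m}},
\]
where the second kernel is \emph{antisymmetric} in $(x,y)$; after multiplying by the symmetric density $Q(x)u^{-\alpha}(x)Q(y)u^{-\alpha}(y)$ and integrating over $\R^n\times\R^n$ that whole term vanishes in one stroke. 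Then a single integration by parts in $x$ of $\int_{B_R}Q\,(x\cdot\nabla u^{1-\alpha})\,dx$ and the limit $R\to\infty$ close the argument. You instead use the Euler relation $(x\cdot\nabla_x+y\cdot\nabla_y)K=(2m-n)K$ to trade $x\cdot\nabla_x K$ for $-y\cdot\nabla_y K$, integrate by parts in $y$, apply Fubini and relabeling to recognize the offending term as $\alpha I$ where $I=\int(x\cdot\nabla u)\,Qu^{-\alpha}\,dx$, and then combine with a direct computation of $I$ by integration by parts in $x$. I have checked the resulting bookkeeping: your evaluation (b) gives $(1+\alpha)I = 2m\int Qu^{1-\alpha}\,dx+\int(x\cdot\nabla Q)u^{1-\alpha}\,dx$, your evaluation (a) gives $I=\tfrac{1}{1-\alpha}\big(-n\int Qu^{1-\alpha}-\int(x\cdot\nabla Q)u^{1-\alpha}\big)$, and eliminating $I$ indeed produces $c_\alpha=\alpha\tfrac{2m-n}{2}-\tfrac{2m+n}{2}$. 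What the paper's route buys is economy: a single integration by parts, and the antisymmetry trick removes the double integral without any Fubini or relabeling. What your route buys is transparency: it is the standard Pohozaev scheme and makes the appearance of the coefficient $c_\alpha$ entirely algebraic.

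Two small remarks. First, your approach incurs an extra boundary term from the integration by parts in $y$ (over $\partial B_R$ in $y$, inside the double integral), and an absolute-convergence check for Fubini, neither of which arise in the paper's proof; these need the same decay bookkeeping you already anticipate, but do be explicit that the surface integral $R\int_{\partial B_R}|x-y|^{2m-n}Q u^{-\alpha}(y)\,d\sigma(y)$ vanishes uniformly for $x$ in compacta. Second, you propose establishing a \emph{two-sided} bound $u\sim|x|^{2m-n}$ before running the argument; the paper's proof deliberately uses only the one-sided bound stated in the hypothesis (lower bound on $u$ controls $u^{1-\alpha}$ when $\alpha>1$; upper bound controls it when $\alpha<1$). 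Your variant genuinely needs control of $u^{-\alpha}$, hence a lower bound on $u$, in both cases; for $0<\alpha<1$ this is not part of the stated hypothesis. It holds in the intended application (where $Q>0$ and the integral representation forces $u\gtrsim(1+|x|)^{2m-n}$), so this is not a fatal gap, but it is a point where your argument uses slightly more than the lemma literally assumes.
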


\begin{proof} 
The proof given below is more or less standard. As $x=(1/2)(x+y+x-y)$ and
\[
\nabla_x (|x-y|^{2m-n}) = (2m-n) |x-y|^{2m-n-2} (x-y),
\]
by differentiating under the integral sign in \eqref{int-u-Q}, we obtain 
\[
x\cdot\nabla u(x)=\frac{2m-n}{2} u(x)+\frac{2m-n}{2} \int_{\R^n}\frac{|x|^2-|y|^2}{|x-y|^{n+2-2m}}Q(y)u^{-\alpha}(y)dy.
\]
Multiplying the above identity by $Q(x)u^{-\alpha}(x)$, and then integrating the resultant on $B_R$ we arrive at
\begin{align*}
\frac{1}{1-\alpha}\int_{B_R}Q & ( x \cdot \nabla u^{1-\alpha} ) dx =\frac{2m-n}2\int_{B_R} Qu^{1-\alpha} \\
&+\frac{2m-n}2 \int_{B_R}Q(x)u^{-\alpha}(x) \Big( \int_{\R^n}\frac{|x|^2-|y|^2}{|x-y|^{n+2-2m}}Q(y)u^{-\alpha}(y)dy \Big) dx.
\end{align*} 
Integration by parts leads to 
\begin{align*}
\int_{B_R}Q ( x \cdot \nabla u^{1-\alpha} ) dx =& -\int_{B_R} ( x \cdot \nabla Q ) u^{1-\alpha} dx - n\int_{B_R} Q u^{1-\alpha} dx\\
& + R\int_{\partial B_R} Q u^{1-\alpha} dx.
\end{align*} 
Hence,
\begin{equation}\label{Pohozaev-final}
\begin{aligned} 
\frac{R}{1-\alpha}\int_{\partial B_R}& Q u^{1-\alpha}d\sigma - 
\frac{2m-n}2\int_{B_R}\int_{\R^n}\frac{|x|^2-|y|^2}{|x-y|^{n+2-2m}}Q(y)u^{-\alpha}(y)Q(x)u^{-\alpha}(x)dy dx\\
&=\frac 1{1-\alpha} \Big[ \frac{(2m+n)-\alpha(2m-n)}{2} \int_{B_R} Qu^{1-\alpha}dx+ \int_{B_R} (x\cdot\nabla Q)u^{1-\alpha}dx \Big].
\end{aligned} 
\end{equation}
Thanks to the decay assumption on $Q$ and the growth of $u$, we easily get
\[
\lim_{R\to \infty} \Big( R \int_{\partial B_R}Q u^{1-\alpha}d\sigma \Big) = 0,
\]
and clearly
\[
\int_{\R^n}\int_{\R^n}\frac{|x|^2-|y|^2}{|x-y|^{n+2-2m}}Q(y)u^{-\alpha}(y)Q(x)u^{-\alpha}(x)dy dx =0
\] 
due to the antisymmetry of the integrand. Furthermore, under the assumptions on $Q$ and on $u$, there holds $Q u^{1-\alpha} \in L^1 (\R^n)$. Hence, by sending $R \nearrow +\infty$, we conclude that the LHS of \eqref{Pohozaev-final} vanishes, giving the desired identity. This completes the proof.
\end{proof}

Let us now discuss how to use our Pohozaev-type identity in the current setting. Recall that the solution $v$ to \eqref{eq-MAIN}$_\varepsilon$ is positive and smooth on $\S^n$. Thanks to \eqref{def-u} we deduce that $u$ enjoys the upper and lower growths as in Lemma \ref{Poho-Rn}. Hence, we have a Pohozaev-type identity for $u$ whenever $\alpha \ne 1$. We shall use this identity in the proof of Lemma \ref{lem-lower} below.


\section{Compactness results}
\label{sec-compact}
 
This section is devoted to a compactness type result for solutions to \eqref{eq-MAIN}$_\ve$, which is of interest itself; see Theorem \ref{thm-uniform} below. Heuristically, one should study the compactness result for fixed $\ve$ and $\alpha$. However, to derive useful estimates for our analysis, one needs certain compactness result which is independent of $\ve$; see the proof of Lemmas \ref{lem-lambda0} and \ref{lem-ve_*} below.
 
\begin{theorem}\label{thm-uniform} 
Let $\ve^* \in (0,1)$ and $\alpha\in (0,(2m+n)/(2m-n)]$ be arbitrary but fixed. Assume that $v_k=v_{\ve_k}$ is a sequence of positive regular solutions to \eqref{eq-MAIN}$_{\ve_k}$ for some $\ve_k\in (0,\ve^*)$. Then there exists $C=C(\ve^*)>0$ such that 
\[
\frac 1C \leq v_k \leq C \quad \text{in } \S^n
\]
for all $k$. The same conclusion holds true for $\ve_k \in [0, \ve^*)$ if $\alpha \in (0, (2m+n)/(2m-n))$.
\end{theorem}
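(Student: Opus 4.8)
The plan is to argue by contradiction, combining the integral equation \eqref{IEn} with a blow-up/rescaling analysis. Suppose that the conclusion fails, so that along a subsequence (still denoted $v_k$) we have $\ve_k \to \ve_\infty \in [0,\ve^*]$ and either $\max_{\S^n} v_k \to \infty$ or $\min_{\S^n} v_k \to 0$. The first step is to transfer to $\R^n$: let $u_k$ be the projected functions defined by \eqref{def-u} for $v_k$, so by Theorem \ref{thm-PDE-IE} each $u_k$ solves the integral equation
\[
u_k(x) = \gamma_{2m,n} \int_{\R^n} |x-y|^{2m-n} F_{\ve_k, u_k}(y)\, dy ,
\]
and $u_k(x) \approx |x|^{2m-n}$ at infinity with constants depending a priori on $k$. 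The key quantitative input is the integral identity obtained from integrating \eqref{eq-MAIN}$_{\ve_k}$ over $\S^n$, namely $(1-\ve_k)\int_{\S^n} v_k\, \dsn = \int_{\S^n} v_k^{-\alpha}\, \dsn$, which together with $0\le\ve_k<\ve^*<1$ shows that $\int_{\S^n} v_k$ and $\int_{\S^n} v_k^{-\alpha}$ are comparable; equivalently $\int_{\R^n} F_{\ve_k,u_k}$ stays in a fixed compact interval of $(0,\infty)$.

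Next I would extract the blow-up point. If $\min_{\S^n} v_k \to 0$, pick $p_k \in \S^n$ realizing the minimum; by pre-composing with a rotation (which leaves the class of equations \eqref{eq-MAIN}$_{\ve_k}$ invariant, since $\gjms$ is rotation-invariant and $Q_n^{2m}$ is constant) we may assume $p_k \to S$, the south pole, so that in the stereographic chart $\pi_N$ the minimum migrates to a bounded region, and $u_k$ develops a zero or near-zero in the chart; the factor $(2/(1+|x|^2))^{(n-2m)/2}$ is bounded on compact sets and plays no role. Rescale: set $\lambda_k := v_k(p_k) = \min v_k$ and look at $w_k(x) := \lambda_k^{-1} u_k(x_k + \lambda_k^{2m/(n+2m)} x)$ (the exponent is chosen so that the singular term $u_k^{-\alpha}$ in $F$ rescales consistently with $(-\Delta)^m$). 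Using the integral equation, the uniform $L^1$ bound on $F_{\ve_k,u_k}$, and standard potential estimates for the Riesz kernel $|x-y|^{2m-n}$, one shows $w_k$ is locally uniformly bounded and converges (up to a subsequence, in $C^{2m}_{loc}$ away from possible concentration points) to a nonnegative solution $w_\infty$ of a limiting integral/differential equation on $\R^n$ of the form $(-\Delta)^m w_\infty = c\, w_\infty^{-\alpha}$ with $c>0$, normalized by $w_\infty(0)=1=\min w_\infty$ when $\alpha\le(2m+n)/(2m-n)$; in the subcritical case $\alpha<(2m+n)/(2m-n)$ with $\ve_\infty\ge 0$ the perturbation term either vanishes in the limit or is subcritical, and one obtains the same limiting profile. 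The classification results for $(-\Delta)^m w = c w^{-\alpha}$ cited in the introduction (see \cite{HW,Ngo,Li}) — or, when $\alpha$ equals the critical exponent and one is in the $\ve=0$ case, the conformal structure — then force $w_\infty$ to be either constant (contradicting $w_\infty(0)=\min$ being a degenerate situation combined with the $L^1$-mass constraint) or a specific non-integrable bubble whose mass contradicts the fixed finite value of $\int_{\R^n} F$. Either way we reach a contradiction. The case $\max_{\S^n} v_k \to \infty$ is handled symmetrically, using the other term in $F_{\ve_k,u_k}$ (the $\ve u$ piece when $\ve^*>0$) together with the mass identity, which pins $\max v_k$ from above once $\min v_k$ is controlled; note that the two alternatives are in fact linked through $(1-\ve_k)\int v_k = \int v_k^{-\alpha}$, so ruling out the collapse $\min v_k\to 0$ essentially rules out the blow-up $\max v_k\to\infty$ as well.

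The main obstacle I expect is the rescaling analysis near a possible concentration point: because $F_{\ve_k,u_k}$ carries \emph{two} powers of $u_k$ with opposite-sign exponents (as the introduction stresses), I must show that only one of them survives in any given blow-up regime and that the rescaled kernel estimates genuinely give local compactness of $\{w_k\}$ rather than mere weak convergence to a measure — in particular, excluding loss of mass to infinity in the rescaled picture. This is where the exact growth rate $u_k(x)\approx|x|^{2m-n}$ from \eqref{def-u}, the global bound \eqref{est-infty} (now with $k$-dependent but, after rescaling, controllable constants), and the fixed-mass constraint must be orchestrated carefully; the Pohozaev-type identity of Lemma \ref{Poho-Rn} applied to the limit profile is the tool that finally kills the critical borderline case $\alpha=(2m+n)/(2m-n)$ when $\ve_\infty>0$, since there the would-be conformal invariance of the limiting equation is broken by the surviving $\ve_\infty$-term and the identity yields $c_\alpha\int Q w^{1-\alpha} = \int (x\cdot\nabla Q) w^{1-\alpha}$ with an explicit $Q$ that is incompatible with $c_\alpha$. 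A secondary technical point is justifying differentiation under the integral sign and the various $L^1$-integrability hypotheses uniformly in $k$, which follows from \eqref{est-infty} once the a priori growth constants are tracked through the argument.
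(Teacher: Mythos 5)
Your opening move — convert to the integral equation on $\R^n$ via Theorem \ref{thm-PDE-IE}, argue by contradiction, and exploit a Pohozaev-type identity together with a blow-up/normalization — is the right skeleton and does align with the paper's strategy. But there is a genuine gap in the ``key quantitative input'' that the rest of your argument leans on.

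You claim that the integral identity $(1-\ve_k)\int_{\S^n} v_k\,\dsn = \int_{\S^n}v_k^{-\alpha}\,\dsn$ combined with $\ve_k<\ve^*<1$ ``shows that $\int v_k$ and $\int v_k^{-\alpha}$ are comparable; equivalently $\int_{\R^n} F_{\ve_k,u_k}$ stays in a fixed compact interval of $(0,\infty)$.'' Comparability is true, but it does not give boundedness: both integrals could diverge in tandem. In fact the paper's Lemma \ref{lem-lower} shows exactly this happens under the contradiction hypothesis $\min v_k\to 0$: see \eqref{est-limit-F_k}, which establishes $\int_{\R^n}F_k\,dy\to\infty$. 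So the ``fixed compact interval'' premise on which your local compactness of $w_k$ rests is false precisely in the scenario you are trying to rule out, and the subsequent ``potential estimates give local boundedness'' and ``mass contradicts the finite value of $\int F$'' steps collapse. The paper instead \emph{accepts} the divergence of $\int F_k$ and normalizes by it, producing $\widetilde u_k:=u_k/(\gamma_{2m,n}\int F_k)$ which converges to $|x|^{2m-n}$ in $C^0_{\mathrm{loc}}$; Pohozaev is then applied to $u_k$ itself (identity \eqref{poho-apply-3}), and the two sides are shown to have incompatible signs after Kelvin-folding $\R^n\setminus B_1$ into $B_1$. This is a materially different mechanism from classifying a blow-up limit of $(-\Delta)^m w = cw^{-\alpha}$ as you propose.

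Two secondary issues worth flagging. First, the rescaling exponent should be $(1+\alpha)/(2m)$, not $2m/(n+2m)$: with $u_k\sim\lambda_k$ and the nonlinearity $u_k^{-\alpha}$, balancing $(-\Delta)^m$ forces the spatial scale $r_k = u_k(0)^{(1+\alpha)/(2m)}$, which is what the paper uses for the case $0<\alpha\le 1$. Second, the paper does not classify the limiting profile at all in the case $0<\alpha\le 1$; it merely extracts a $C^0_{\mathrm{loc}}$ limit $\eta$ satisfying the growth bounds \eqref{est-4}, applies Fatou to get $\int |y|^{2m-n}\eta^{-\alpha}\,dy<\infty$, and observes this forces $(\alpha-1)(2m-n)>n$, contradicting $\alpha\le 1$. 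This is much cheaper than invoking the classification theorems in \cite{HW,Ngo,Li}, which are stated for global solutions on $\R^n$ and would require additional verification before they apply to your rescaled limit. Finally, note the paper derives the pointwise \emph{lower} bound first (Lemma \ref{lem-lower}), from which the upper bound is essentially free: a bounded-below $v_k$ gives $\int v_k^{-\alpha}=O(1)$, hence $\int v_k = O(1)$ by the mass identity, and standard elliptic estimates for $\gjms$ then deliver the $L^\infty$ bound. Your framing of the two alternatives (``min $\to 0$'' versus ``max $\to\infty$'') as parallel and linked is conceptually on target, but the actual dependency is one-directional and simpler than the symmetric blow-up analysis you sketch.
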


It is worth noting that the above compactness fails for solutions to \eqref{eq-MAIN}$_0$ in the case $\alpha=(n+2m)/(2m-n)$ due to the conformally invariant property of the underlying equation. More specifically, fixing any solution $v$ to
\[
\gjms (v) = \Q v^{\frac{n+2m}{2m-n}} \quad \text{in } \S^n
\]
and let 
\[
v_\phi = (v \circ \phi) |\det (d\phi)|^{-\frac 1{2n}},
\]
where $\phi$ is any conformal transformation on $\S^n$. Then, it is well-known that $v_\phi$ solves the same equation in $\S^n$. Hence, if one choose a sequence of $\phi$ in such a way that $|\det (d\phi)| \searrow 0$, then the sequence $v_\phi$ is unbounded in $\S^n$. 

In order to prove the above theorem we first need to rule out the possibility that the sequence $v_k$ will eventually touch zero. This in particular implies the lower estimate in the theorem.

\begin{lemma}\label{lem-lower} 
Under the hypothesis of Theorem \ref{thm-uniform} we have 
\[
\inf_{k\geq1}\min_{\S^n}v_k>0.
\] 
\end{lemma}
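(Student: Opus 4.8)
\medskip

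The goal is to show $\inf_{k\ge 1}\min_{\S^n} v_k>0$, i.e.\ that a sequence of positive solutions to \eqref{eq-MAIN}$_{\ve_k}$ cannot degenerate to zero at some point. I would argue by contradiction: suppose $\min_{\S^n} v_k \to 0$ along a subsequence, and pick $p_k\in\S^n$ with $v_k(p_k)=\min_{\S^n}v_k=:m_k\to 0$. The first move is to pass to the Euclidean picture. By composing with a rotation of $\S^n$ I may arrange that $p_k$ is, say, the south pole, so that under the stereographic projection $\pi=\pi_N$ the minimum point is sent to the origin; let $u_k$ be the corresponding function on $\R^n$ defined by \eqref{def-u}. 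By Theorem \ref{thm-PDE-IE}, each $u_k$ satisfies the integral equation
\[
u_k(x)=\gamma_{2m,n}\int_{\R^n}|x-y|^{2m-n}F_{\ve_k,u_k}(y)\,dy ,
\]
with $u_k$ having exact growth $|x|^{2m-n}$ at infinity, $\gamma_{2m,n}>0$, and $u_k(0)=(2)^{(2m-n)/2}\,(1-\ve_k)^{?}$—more precisely $u_k(0)$ is comparable to $m_k\to 0$ (up to the fixed conformal factor $2^{(2m-n)/2}$ at the origin). Dropping the nonnegative $\ve_k$-term in $F_{\ve_k,u_k}$, we get the pointwise lower bound
\[
u_k(x)\ \ge\ \gamma_{2m,n}\int_{\R^n}|x-y|^{2m-n}\Big(\tfrac{2}{1+|y|^2}\Big)^{\frac{n+2m}{2}+\alpha\frac{n-2m}{2}} u_k(y)^{-\alpha}\,dy .
\]

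\medskip

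The heart of the argument is that the right-hand side cannot be made small at $x=0$. Evaluate at $x=0$: since $\alpha>0$, we have $u_k(y)^{-\alpha}\ge \big(\sup_{\S^n}v_k\big)^{-\alpha}\cdot(\text{conformal factor})^{-\alpha}$ on any fixed ball, but a cleaner route is to bootstrap from $u_k(0)$ itself. I would first establish an \emph{upper} bound on $u_k$ on compact sets that holds uniformly in $k$: from the constraint $(1-\ve_k)\int_{\S^n}v_k\,d\mu = \int_{\S^n}v_k^{-\alpha}\,d\mu$ obtained by integrating \eqref{eq-MAIN}$_{\ve_k}$ (noted in the introduction), together with $v_k(p_k)=m_k\to 0$ and the fact that $v_k$ is superharmonic-type for the polyharmonic operator, one expects $\int_{\S^n}v_k^{-\alpha}\to\infty$, hence $\int_{\S^n}v_k\to\infty$ (using $\ve_k<\ve^*<1$). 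Then a Harnack/Bôcher-type inequality for the integral operator with kernel $|x-y|^{2m-n}$ forces $u_k\to\infty$ locally uniformly away from the origin—or, feeding this back into the displayed lower bound for $u_k(0)$, one derives instead that $\int_{\R^n}(\tfrac{2}{1+|y|^2})^{\cdots}u_k(y)^{-\alpha}dy\to 0$, which combined with Fatou's lemma forces $u_k\to\infty$ a.e., contradicting the trivial solution comparison $v\equiv(1-\ve)^{-1/(\alpha+1)}$ being a lower-order competitor. The precise mechanism I would use: since $u_k(0)\to0$ while $u_k(x)\gtrsim |x|^{2m-n}$ for $|x|$ large uniformly (because $\int v_k^{-\alpha}=(1-\ve_k)\int v_k$ and the latter cannot go to zero—$v\equiv 1$ has $\int v=|\S^n|$ and by the Pohozaev identity of Lemma \ref{Poho-Rn}, or by direct testing, the integral $\int_{\S^n}v_k$ is bounded below), Fatou gives
\[
0=\lim_k u_k(0)\ \ge\ \gamma_{2m,n}\int_{\R^n}|y|^{2m-n}\Big(\tfrac{2}{1+|y|^2}\Big)^{\frac{n+2m}{2}+\alpha\frac{n-2m}{2}}\liminf_k u_k(y)^{-\alpha}\,dy ,
\]
so $\liminf_k u_k(y)=+\infty$ for a.e.\ $y$; but then $\int_{\S^n}v_k^{-\alpha}\to 0$, whence $\int_{\S^n}v_k\to0$, contradicting the lower bound on $\int_{\S^n}v_k$.

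\medskip

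So the structure is: (i) reduce to $\R^n$ via stereographic projection, centering at the minimum point; (ii) use the integral equation and positivity of $\gamma_{2m,n}$ and of the $\ve_k$-term to get a clean lower bound for $u_k$ in terms of $\int u_k^{-\alpha}$ against a fixed weight; (iii) derive a uniform lower bound on $\int_{\S^n}v_k$ (equivalently on the mass of $F_{\ve_k,u_k}$), e.g.\ from the integrated equation $(1-\ve_k)\int v_k=\int v_k^{-\alpha}$ together with an a~priori control that rules out $\int v_k\to 0$; (iv) run a Fatou/Harnack argument to reach the contradiction $u_k(p_k)\not\to0$.

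\medskip

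The step I expect to be the main obstacle is (iii): pinning down a uniform lower bound on $\int_{\S^n}v_k$ (or, what amounts to the same, ruling out the scenario where $v_k$ degenerates \emph{everywhere} rather than at a point). If $v_k\to 0$ uniformly then $\int v_k^{-\alpha}\to\infty$ while $(1-\ve_k)\int v_k\to 0$, which already contradicts the integrated equation—so that extreme case is easy. The delicate case is when $v_k$ is small near $p_k$ but large elsewhere; there one must quantify the interplay between the blow-up set of $v_k^{-\alpha}$ and the growth of $v_k$ forced by the integral representation, and this is exactly where the exact-growth normalization $u(x)\approx|x|^{2m-n}$ from \eqref{def-u} and the positivity $\gamma_{2m,n}>0$ do the essential work. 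Handling the critical value $\alpha=(2m+n)/(2m-n)$ requires extra care since the weight $(\tfrac{2}{1+|y|^2})^{\frac{n+2m}{2}+\alpha\frac{n-2m}{2}}$ then degenerates to a constant and the conformal invariance noted after Theorem \ref{thm-uniform} is lurking; but for the \emph{lower} bound (as opposed to the upper bound, which genuinely fails at criticality) the $\ve_k>0$ term, or the mass identity, should still suffice, which is why the statement of this lemma does not exclude the critical exponent.
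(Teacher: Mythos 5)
There is a genuine gap, and it sits exactly at the step you flagged as least worrying. Your Fatou argument only yields $\liminf_k u_k(y)^{-\alpha}=0$ for a.e.\ $y$, which is the statement that $\limsup_k u_k(y)=\infty$ (along a $y$-dependent subsequence), not $\liminf_k u_k(y)=\infty$; so the claimed a.e.\ pointwise divergence of $u_k$ does not follow. But even granting full pointwise divergence $u_k(y)\to\infty$ a.e., the conclusion you draw from it is backwards: $u_k\to\infty$ a.e.\ means $v_k\to\infty$ a.e.\ on $\S^n$, so by Fatou $\int_{\S^n}v_k\,\dsn\to\infty$, and the mass identity $(1-\ve_k)\int v_k=\int v_k^{-\alpha}$ then forces $\int v_k^{-\alpha}\to\infty$ as well. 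That is entirely consistent with $v_k$ degenerating to $0$ at the single point $p_k$ (where $v_k^{-\alpha}$ concentrates), so no contradiction is reached. The delicate scenario you describe at the end — small near $p_k$, large elsewhere — is precisely the one your argument fails to exclude, and it is the heart of the lemma, not a side issue. Incidentally, step (iii), which you identified as the main obstacle, is actually elementary: by Jensen, $\int v_k^{-\alpha}\geq |\S^n|^{1+\alpha}(\int v_k)^{-\alpha}$, so the mass identity gives $\int v_k\geq |\S^n|$ directly.

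What the proof actually requires — and what the paper supplies — is quantitative information about the blow-up profile, not just a.e.\ divergence. The paper first shows $u_k(x)\to\infty$ for every $x\neq 0$ (by showing that boundedness at one point would force $u_k\leq C(1+|x|^{2m-n})$, hence a uniform positive lower bound on $F_k$ on an annulus, hence $u_k(0)\not\to 0$), and then normalizes to get $\widetilde u_k\to |x|^{2m-n}$ in $C^0_{\rm loc}$ with matching two-sided bounds at infinity. This is the input into the decisive step, which your sketch does not address at all: a Pohozaev-type identity (Lemma \ref{Poho-Rn}) rewritten via the Kelvin transform, whose two sides acquire definite and opposite signs as $k\to\infty$ — giving the contradiction for $\alpha>1$ — and a separate rescaling argument producing an impossible decay rate for $0<\alpha\leq 1$. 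Without some substitute for these sign and scaling computations, the Fatou/Harnack route you sketch cannot detect why a single degenerate point is impossible, and your proposal as written does not close.
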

\begin{proof}

We assume by contradiction that the lemma is false. Then, up to a subsequence, we assume that
\[
\min_{\S^3}v_k\to0 \quad \text{as } k \to \infty.
\]
Without loss of generality we can further assume that the minimum of $v_k$ is attained at the south pole. Let $u_k$ be defined by \eqref{def-u} using $\pi_N$, and let $ F_k:=F_{\ve_k,u_k}$ as in \eqref{def-F}. In view of \eqref{def-u} and $2m>n$, the function $u_k$ achieves its minimum at $0$. By Theorem \ref{thm-PDE-IE}, the function $u_k$ satisfies 
\begin{align}\label{int-uk} 
u_k(x) = \gamma_{2m,n}\int_{\R^n} |x-y|^{2m-n} F_k(y)dy .
\end{align}
To show that this is also not the case, we use the Pohozaev-type identity of  Lemma \ref{Poho-Rn} and the role played by $\ve_k$ and $\alpha$. Indeed, as $F_k>0$ we first obtain
\begin{equation}\label{est-bound-yF_k}
u_k(0) = \gamma_{2m,n}\int_{\R^n} |y|^{2m-n} F_k(y)dy = o(1)_{k \to \infty}.
\end{equation}
Using this one can show that
\begin{equation}\label{est-limit-u_k}
\lim_{k\to\infty} u_k (x) = \infty \quad \text{for each } x \in \R^n \setminus \{0\}.
\end{equation}
Indeed, by way of contradiction suppose that there is some $x_0 \in \R^n \setminus \{0\}$ such that $u_k(x_0) = O(1)_{k\to \infty}$. As
\begin{align*}
\frac{u_k (x_0)}{ \gamma_{2m,n}} &=\int_{\R^n} |x_0 - y|^{2m-n} F_k(y)dy \\
&\geq 2^{-2m+n+1} \int_{\R^n} |x_0|^{2m-n} F_k(y)dy - \int_{\R^n} |y|^{2m-n} F_k(y)dy 
\end{align*}
we obtain
\[
\int_{\R^n} F_k(y)dy = O(1)_{k\to\infty},
\] 
thanks to $u_k (0) = O(1)_{k \to \infty}$. Hence
\begin{equation}\label{est-bound-(1+y)F_k}
\int_{\R^n} (1+|y|^{2m-n}) F_k(y)dy = O(1)_{k\to\infty}.
\end{equation}
Consequently, for any $x \in \R^n$, one can estimate
\begin{align*}
\frac{u_k(x)}{ \gamma_{2m,n}} &= \int_{\R^n} |x-y|^{2m-n} F_k(y)dy
\leq 2^{2m-n - 1}\int_{\R^n} (|x|^{2m-n}+|y|^{2m-n}) F_k(y)dy,
\end{align*}
which leads to
\[
u_k(x) \leq C(1+|x|^{2m-n}) \quad \text{in } \R^n
\]
for some constant $C>0$. Having this, one can bound $F_k$ from below near the origin. For example, for any $x \in B_2$,
we easily get
\begin{align*}
F_k (x) & \geq \big( \frac{2}{1+|x|^2}\big)^{-c_\alpha } u_k(x)^{-\alpha}
 \geq \frac 1{C^\alpha} \big( \frac{2}{1+|x|^2}\big)^{\frac{n+2m}{2} } 
\geq \frac 1{C^\alpha} \Big(\frac{2}{5}\Big)^{\frac{n+2m}{2} } ,
\end{align*}
thanks to $u_k(x) \leq C(1+|x|^2)^{(2m-n)/2}$ in $\R^n$. However, this violates the fact that $u_k (0) = o(1)_{k \to \infty}$. Indeed, 
\begin{align*}
\frac{u_k(0)}{ \gamma_{2m,n}} &\geq \int_{B_2 \setminus B_1} |y|^{2m-n} F_k(y)dy 
 \geq \frac 1{C^\alpha} \Big(\frac{2}{5}\Big)^{\frac{n+2m}{2} } \int_{B_2 \setminus B_1} |y|^{2m-n} dy > 0
\end{align*}
for all $k$. Thus, no such a point $x_0$ could exist, and hence \eqref{est-limit-u_k} must hold. Notice that the above proof also reveals the fact that
\begin{equation}\label{est-limit-F_k}
\lim_{k\to\infty}\int_{\R^n} F_k(y)dy= \infty,
\end{equation}
otherwise by \eqref{est-bound-yF_k} one would again have \eqref{est-bound-(1+y)F_k} and again this leads to a contradiction. Now we normalize $u_k$ and $F_k$ as follows
\[
\widetilde u_k:=\frac{u_k}{\gamma_{2m,n}\int_{\R^n} F_kdy },\quad \widetilde F_k:=\frac{ F_k}{\int_{\R^n} F_kdy} .
\]
Then 
\[
\widetilde u_k(x)=\int_{\R^n}|x-y|^{2m-n}\widetilde F_k(y)dy,\quad \int_{\R^n}\widetilde F_kdy=1.
\]
Having \eqref{est-limit-F_k}, it is clear that $\widetilde u_k(0)\to0$ and 
\[
|\nabla\widetilde u_k(x)| \leq (2m-n) \int_{\R^n}|x-y|^{2m-n-1}\widetilde F_k(y)dy \leq C(1+|x|^{2m-n-1}) \quad \text{in } \R^n.
\]
Notice that because of \eqref{est-limit-F_k} for large $k$ there holds $\widetilde F_k (x) \leq F_k (x)$ everywhere. This and \eqref{est-bound-yF_k} now implies the following
\[
\lim_{k\to\infty}\int_{\R^n \setminus B_\delta} \widetilde F_k (y)dy\to 0\quad\text{for any fixed }\delta>0.
\]
Once we have the above limit in hand and seeing $\widetilde u$ as a convolution, by standard argument, we get that 
\begin{align}\label{conv-tildeu-1}
\widetilde u_k\to \widetilde u:= |x|^{2m-n}\quad\text{in }C^0_{\rm loc}(\R^n)
\end{align}
and at the same time
\begin{align}\label{conv-tildeu-2}
\frac1C|x|^{2m-n}\leq \widetilde u_k\leq C|x|^{2m-n}\quad\text{in } \R^n \setminus B_1
\end{align}
for some $C>0$. Notice that we can write $F_k$ as 
\[
F_k=\left( \ve_k f^{2m} u_k^{1+\alpha} + f^{-c_\alpha} \right) u_k^{-\alpha}=:Q_k u_k^{-\alpha},
\]
where we denote
\[
f(x):=\frac{2}{1+|x|^2}.
\]
By the Pohozaev-type identity in Lemma \ref{Poho-Rn}, we get 
\begin{align}\label{poho-apply-1}
\int_{\R^n}(x\cdot\nabla Q_k)u_k^{1-\alpha}dx= c_\alpha \int_{\R^n} Q_k u_k^{1-\alpha}dx.
\end{align}
(Here, the multiplicative constant $\gamma_{2m,n} \ne 0$ cancels out from the both sides, thanks to Theorem \ref{thm-PDE-IE}.) Let us first compute 
\[
\nabla \big(\ve_k f^{2m} u_k^{1+\alpha} \big) = 2m \ve_k f^{2m-1} u_k^{1+\alpha} \nabla f + \frac{1+\alpha}2\ve_k f^{2m} u_k^{\alpha - 1} \nabla u_k^2 
\]
and
\[
\nabla ( f^{-c_\alpha} ) = -c_\alpha f^{-c_\alpha-1}\nabla f,
\]
leading us to
\[
x\cdot\nabla Q_k=\Big[ \left( 2m\ve_k f^{2m-1} u_k^2 -c_\alpha f^{-c_\alpha-1}u_k^{1-\alpha}\right) (x\cdot\nabla f) +\frac{1+\alpha}{2} \ve_k f^{2m} (x\cdot\nabla u_k^2) \Big]u_k^{\alpha-1} .
\]
Therefore, from \eqref{poho-apply-1} we get
\begin{align*} 
c_\alpha \int_{\R^n} \big[\ve_k f^{2m} u_k^2 + f^{-c_\alpha} u_k^{1-\alpha} \big] dx &= \int_{\R^n} \big[ 2m\ve_k f^{2m-1} u_k^2 -c_\alpha f^{-c_\alpha-1}u_k^{1-\alpha} \big] (x\cdot\nabla f) dx \\
& \quad + \frac{1+\alpha}{2} \ve_k \int_{\R^n} f^{2m} (x\cdot\nabla u_k^2) dx \\
&= \int_{\R^n} m\ve_k (1-\alpha) f^{2m-1} u_k^2 (x\cdot\nabla f) dx \\
&\quad + \int_{\R^n} \ve_k \frac{1+\alpha}2 u_k^2 (x\cdot\nabla f^{2m} ) dx \\
& \quad -c_\alpha \int_{\R^n} f^{-c_\alpha-1}u_k^{1-\alpha} (x\cdot\nabla f) dx \\
& \quad + \frac{1+\alpha}{2} \ve_k \int_{\R^n} f^{2m} (x\cdot\nabla u_k^2) dx .
\end{align*} 
By integration by parts, we note that 
\begin{align*} 
 \int_{\R^n} \big[ u_k^2 (x\cdot\nabla f^{2m}) &+ f^{2m} (x\cdot\nabla u_k^2) \big]dx \\
 &=\lim_{R \to \infty} \sum_{i=1}^n \Big[ \int_{B_R} \big[ - u_k^2 f^{2m} \big]dx + \frac{1}R \int_{\partial B_R}x_i^2 f^{2m} u_k^2 d\sigma \Big] \\
 &= \lim_{R \to \infty} \Big[ - n \int_{B_R}u_k^2 f^{2m} dx + R \int_{\partial B_R} f^{2m} u_k^2 d\sigma \Big]\\
 &=- n \int_{\R^n} f^{2m} u_k^2 dx.
\end{align*} 
Putting the above estimates together we arrive at
\begin{equation}\label{poho-apply-2}
\begin{aligned}
\ve_k\int_{\R^n} f^{2m-1}u_k^2 \Big[ m(1-\alpha)(x\cdot\nabla f)& - \Big(\frac{n(1+\alpha)}{2}+c_\alpha \Big)f\Big] dx \\
&=c_\alpha\int_{\R^n} f^{-c_\alpha-1}u_k^{1-\alpha}\left( x\cdot\nabla f +f \right) dx.
\end{aligned}
\end{equation}
Since 
\[
x\cdot\nabla f+f=f\frac{1-|x|^2}{1+|x|^2},
\]
and 
\[
m(1-\alpha)+n\frac{1+\alpha}{2}+c_\alpha =0,
\]
the identity \eqref{poho-apply-2} can be rewritten as 
\begin{align}\label{poho-apply-3} 
\ve_k m(1-\alpha)\int_{\R^n} f^{2m}u_k^2\frac{1-|x|^2}{1+|x|^2}dx= c_\alpha \int_{\R^n} f^{-c_\alpha} u_k^{1-\alpha} \frac{1-|x|^2}{1+|x|^2}dx .
\end{align} 
Our next step is to show that for large $k$, the two integrals in \eqref{poho-apply-3} are non-zero with different sign.

\noindent\textbf{Estimate of the LHS of \eqref{poho-apply-3}}.
Concerning the integral on the LHS of \eqref{poho-apply-3}, a simple calculation shows that
\begin{align*} 
 \frac{1}{M_k^2}\int_{\R^n} f^{2m} (x) & u_k^2 (x) \frac{1-|x|^2}{1+|x|^2}dx\\
&= \int_{\R^n} \big( \frac{2}{1+|x|^2}\big)^{2m} \widetilde u_k^2 (x) \frac{1-|x|^2}{1+|x|^2}dx \\
&= \int_{B_1} \big( \frac{2}{1+|x|^2}\big)^{2m}\frac{1-|x|^2}{1+|x|^2} \Big( \widetilde u_k^2 (x) - |x|^{4m-2n} \widetilde u_k^2 \big( \frac x{|x|^2} \big) \Big)dx ,
\end{align*}
here we have converted the integral on $\R^n \setminus B_1$ into $B_1$ using Kelvin's transformation. In $B_1 \setminus \{0\}$, it follows from \eqref{conv-tildeu-1} and \eqref{conv-tildeu-2} that
\[
\widetilde u_k^2 (x) - |x|^{4m-2n} \widetilde u_k^2 \big( \frac x{|x|^2} \big)
\to |x|^{4m-2n} - 1 \leq 0 \quad \text{as } k \to \infty.
\]
Notice that
\begin{align*} 
\lim_{k \to \infty} \int_{B_1}\big( \frac{2}{1+|x|^2}\big)^{2m} & \frac{1-|x|^2}{1+|x|^2} \Big( \widetilde u_k^2 (x) - |x|^{4m-2n} \widetilde u_k^2 \big( \frac x{|x|^2} \big) \Big)dx \\
&= \int_{B_1} \big( \frac{2}{1+|x|^2}\big)^{2m}\frac{1-|x|^2}{1+|x|^2}\left( |x|^{4m-2n} -1\right)dx <0.
\end{align*}
This and $\ve_k >0$ imply that the LHS of \eqref{poho-apply-3}  is  strictly negative   for large $k$.

\noindent\textbf{Estimate of the RHS of \eqref{poho-apply-3}}. Reasoning as in the previous step we should have
\begin{align*} 
\frac{1}{M_k^{1-\alpha}} & \int_{\R^n} f^{-c_\alpha} (x) u_k^{1-\alpha} (x) \frac{1-|x|^2}{1+|x|^2}dx \\
&= \int_{B_1} \big( \frac{2}{1+|x|^2}\big)^{-c_\alpha}\frac{1-|x|^2}{1+|x|^2}\Big( \widetilde u_k^{1-\alpha} (x) -|x|^{-2c_\alpha -2n} \widetilde u_k^{1-\alpha} \big( \frac x{|x|^2} \big) \Big)dx.
\end{align*}
In $B_1$, it follows from \eqref{conv-tildeu-1} that
\[
\widetilde u_k^{1-\alpha} (x) -|x|^{-2c_\alpha -2n} \widetilde u_k^{1-\alpha} \big( \frac x{|x|^2} \big) 
\to |x|^{(2m-n)(1-\alpha)} - 1 \geq 0 \quad \text{as } k \to \infty.
\]
Now observe that for $\alpha>1$
\begin{align*} 
\int_{B_1} \big( \frac{2}{1+|x|^2}\big)^{-c_\alpha}\frac{1-|x|^2}{1+|x|^2}\big(|x|^{(2m-n)(1-\alpha)} - 1 \big)dx > 0,
\end{align*}
which imply that the  the RHS of \eqref{poho-apply-3}  is strictly positive for large $k$ (for certain $\alpha > 1$, the preceding integral could be infinity). Now going back to \eqref{poho-apply-3}, we easily obtain a contradiction for $\alpha>1$. Indeed, we have two possible cases. First, if $\ve_k > 0$ for large $k$, then as $\ve_k m(1-\alpha)<0$, the LHS of \eqref{poho-apply-3} becomes strictly positive. However, as $c_\alpha \leq 0$, the RHS of \eqref{poho-apply-3} becomes non-positive. This is a contradiction. In contrary, we have $\ve_k=0$ for a sequence of $k$. However, under $\ve_k = 0$ the LHS of \eqref{poho-apply-3} vanishes but as $c_\alpha < 0$ the RHS of \eqref{poho-apply-3} becomes strictly negative. This is again a contradiction. And this completes our proof of the compactness for $\alpha>1$.

Finally we consider the case $0<\alpha\leq1$. We set 
\[
\eta_k(x):=\frac{u_k(r_k x)}{u_k(0)},\quad r_k:=u_k(0)^\frac{1+\alpha}{2m}\to0.
\] 
Then $\eta_k$ satisfies $\eta_k\geq \eta_k(0)=1$, and 
\begin{align} \label{etak-1}
\eta_k(x)=\gamma_{2m,n}\int_{\R^n}|x-y|^{2m-n} \left( \ve_k r_k^{2m} f^{2m}(r_ky)\eta_k(y) +\frac{ f^{-c_\alpha}(r_ky)}{\eta_k^\alpha(y) } \right) dy .
\end{align}
Then it follows that 
\begin{align} \label{est-1} 
\int_{\R^n}|y|^{2m-n} \left( \ve_k r_k^{2m} f^{2m}(r_ky)\eta_k(y) +\frac{ f^{-c_\alpha}(r_ky)}{\eta_k^\alpha(y) } \right) dy=\frac{\eta_k(0)}{\gamma_{2m,n}}\leq C, \end{align} and together with $\eta_k\geq1$, \begin{align} \label{est-2} \int_{\R^n}\left(1+|y|^{2m-n} \right) \frac{ f^{-c_\alpha}(r_ky)}{\eta_k^\alpha(y) } dy \leq C. \end{align} Therefore, 
\[
\eta_k(x)=\gamma_{2m,n}\ve_k r_k^{2m} \int_{B_1}|x-y|^{2m-n} f^{2m}(r_ky)\eta_k(y) dy +O(1)\quad\text{for }x\in B_1.
\] 
Integrating the above identity with respect to $x$ in $B_1$, and using that $f(r_ky)=2+o(1)$ on $B_1$, we obtain
\[ 
\int_{B_1}\eta_k(x) dx=o(1) \int_{B_1} \eta_k(y) dy +O(1),
\]
and hence
\[
\int_{B_1}\eta_kdx\leq C.
\] 
Combining the above estimates \begin{align} \label{est-3} \int_{\R^n}\left(1+|y|^{2m-n} \right)\left( \ve_k r_k^{2m} f^{2m}(r_ky)\eta_k(y) +\frac{ f^{-c_\alpha}(r_ky)}{\eta_k^\alpha(y) } \right) dy \leq C. \end{align} This yields \begin{align}\label{est-4}|\nabla \eta_k(x)|\leq C(1+|x|^{2m-n-1}),\quad \frac1C\left(1+|x|^{2m-n}\right)\leq\eta_k(x)\leq C\left(1+|x|^{2m-n}\right) .\end{align} Hence, up to a subsequence, 
\[
\eta_k\to\eta\quad\text{in }C^0_{loc}(\R^n).
\]
From Fatou's lemma, we get that 
\[
\int_{\R^n} \frac{|y|^{2m-n}}{\eta^\alpha(y)}dy<\infty,
\]
thanks to \eqref{est-4}. Since $\eta$ satisfies the second estimate in \eqref{est-4}, we necessarily have that \begin{align*} (\alpha-1)(2m-n)>n,\end{align*} a contradiction to $0<\alpha\leq1$.
\end{proof}
 
We are now in a position to prove Theorem \ref{thm-uniform}.

\begin{proof}[Proof of Theorem \ref{thm-uniform}] 
Since $\ve_k\in [0,\ve^*)$ and $0<\ve^*<1$, integrating \eqref{eq-MAIN} on $\S^n$ we get that 
\[
0 \leq \int_{\S^n}v_k \dsn \leq \frac 1{1-\ve^*} \int_{\S^n} v_k^{-\alpha}\dsn=O(1)_{k \to \infty},
\]
thanks to Lemma \ref{lem-lower}. Therefore, we arrive at
\[
\gjms (v_k) - \ve_k \Q v_k=O(1)_{k \to \infty} \quad\text{in }\S^n 
\] 
with $\|v_k\|_{L^1(\S^n)} = O(1)_{k \to \infty}$. The theorem follows from standard elliptic estimates. 
\end{proof}
 
 
\section{Moving plane arguments and proof of the main result}
 
This section is devoted to the proof of Theorem \ref{thm-main}. To obtain the symmetry of solutions, our approach is based on the method of moving planes with some new ingredients. The major difficulty is how to handle the negative exponent. As far as we know, although the method of moving planes can be effectively applied to nonlinear equations with positive exponents, see \cite{cl1991, WeiXu, clo2006, CLS} and the references therein, its applications to equations with negative exponents are very rare.

Let us recall some notation and convention often used in the method of moving planes; see Figure \ref{fig-MP} below. For $\lambda \in \R$ we set
\[
\Sigma_\lambda:=\{x\in\R^n:x_1>\lambda\},\quad T_\lambda := \partial \Sigma_\lambda .
\]
Also for any $\lambda \in \R$ we let $x^\lambda$ be the reflection of $x \in \R^n$ about the plane $T_\lambda$, namely
\[
x^\lambda :=(2\lambda-x_1, x_2,x_3,\dots, x_n).
\]
Also for any function $f$ we let $f_\lambda$ be the reflection of $f$ about the plane $T_\lambda$, namely
\[
f_\lambda (x) := f(x^\lambda) = f(2\lambda-x_1, x_2,x_3,\dots, x_n).
\]

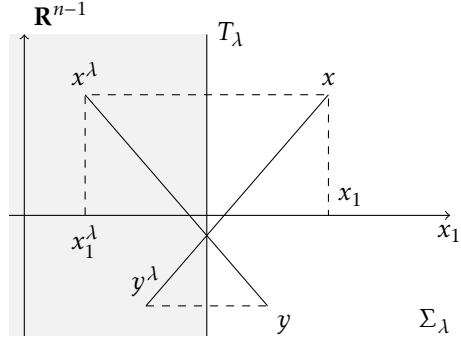
\begin{figure}[H]
\begin{tikzpicture}[scale=0.8]
\fill [gray!10!white] (-3.25,-2) rectangle (0,3);
\draw[->] (-3.25, 0) -- (4,0) node[anchor =north] {$x_1$};
\draw[-] (0,-2) -- (0,3) node[anchor =west] {$T_\lambda$};
\draw[->] (-3,-2) -- (-3,3) node[anchor =south west] {$\R^{n-1}$};
\draw[-, dashed] (-2, 0) node[anchor =north] {$x_1^\lambda$} -- (-2,2) node[anchor =south] {$x^\lambda$} -- (2,2) node[anchor =south] {$x $} -- (2,0) node[anchor =south west] {$x_1$} ;
\draw[-] (-2, 2) -- (1,-1.5) node[anchor =north west] {$y $};
\draw[-] (2, 2) -- (-1,-1.5) node[anchor =south] {$y^\lambda$};
\draw[-, dashed] (-1,-1.5) -- (1,-1.5);
\node at (3.75,-1.75) {$\Sigma_\lambda$};
\end{tikzpicture}
\caption[]{Reflection in the method of moving planes}\label{fig-MP}
\end{figure}

Throughout this section we let $u=u_\ve > 0$ be a (smooth) solution to \eqref{IEn} with $F_\ve:=F_{\ve,u}$ as in \eqref{def-F} for fixed $0<\alpha \leq (n+2m)/(2m-n)$ and fixed $0 \leq \ve<\ve^*$ with an additional assumption that $\alpha<(n+2m)/(2m-n)$ if $\ve=0$. For simplicity, we set
\[
w_{\ve, \lambda}(x) := u_\ve(x)-u_\ve(x^\lambda) \quad \text{for all } x \in \R^n.
\] 
To start moving planes, the following lemma is often required.

\begin{lemma}\label{lem-w}
There hold
\begin{equation}\label{eq-w1}
w_{\ve, \lambda}(x) = \gamma_{2m,n}\int_{\R^n} \big[ |x-y|^{2m-n}-|x^\lambda-y|^{2m-n} \big] F_\ve(y)dy
\end{equation}
and
\begin{equation}\label{eq-w2}
w_{\ve, \lambda }(x) =\gamma_{2m,n}\int_{\Sigma_\lambda} \big[ |x^\lambda-y|^{2m-n}-|x-y |^{2m-n} \big] [F_\ve(y^\lambda)-F_\ve(y )]dy
\end{equation}
for any $\lambda \in \R$.
\end{lemma}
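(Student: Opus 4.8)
The plan is to derive both identities directly from the integral representation \eqref{IEn} of $u_\ve$, using only the definition of $w_{\ve,\lambda}$ and elementary manipulations of the integral over the half-space decomposition $\R^n = \Sigma_\lambda \cup (T_\lambda \cup \Sigma_\lambda^c)$.

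For \eqref{eq-w1}, I would simply write $u_\ve(x)$ and $u_\ve(x^\lambda)$ using \eqref{IEn}, namely
\[
u_\ve(x) = \gamma_{2m,n}\int_{\R^n} |x-y|^{2m-n} F_\ve(y)\,dy, \qquad u_\ve(x^\lambda) = \gamma_{2m,n}\int_{\R^n} |x^\lambda-y|^{2m-n} F_\ve(y)\,dy,
\]
and subtract. Since \eqref{est-infty} guarantees that $F_\ve$ decays fast enough for both integrals (and their difference) to converge absolutely, the subtraction under a single integral sign is justified, yielding \eqref{eq-w1}.

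For \eqref{eq-w2}, I would split the integral in \eqref{eq-w1} over $\Sigma_\lambda$ and its complement, and on the complement perform the change of variables $y \mapsto y^\lambda$ (which is a measure-preserving involution mapping $\Sigma_\lambda^c$ onto $\Sigma_\lambda$, up to the null set $T_\lambda$). The key geometric facts to invoke are $|x^\lambda - y^\lambda| = |x - y|$ and $|x - y^\lambda| = |x^\lambda - y|$, which hold because reflection about $T_\lambda$ is an isometry. After the substitution the integral over $\Sigma_\lambda^c$ becomes an integral over $\Sigma_\lambda$ with $F_\ve(y)$ replaced by $F_\ve(y^\lambda)$ and the kernel $|x-y|^{2m-n} - |x^\lambda-y|^{2m-n}$ replaced by $|x-y^\lambda|^{2m-n} - |x^\lambda-y^\lambda|^{2m-n} = |x^\lambda - y|^{2m-n} - |x - y|^{2m-n}$. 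Combining the two pieces and collecting terms gives
\[
w_{\ve,\lambda}(x) = \gamma_{2m,n}\int_{\Sigma_\lambda}\big[ |x^\lambda-y|^{2m-n}-|x-y|^{2m-n}\big]\big[F_\ve(y^\lambda) - F_\ve(y)\big]\,dy,
\]
which is \eqref{eq-w2}.

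This statement is essentially a bookkeeping lemma, so I do not expect a genuine obstacle; the only point requiring a word of care is the absolute convergence needed to manipulate the integrals freely, but this is supplied by the decay estimate \eqref{est-infty} already established, together with the fact that for $|x|$ in a bounded set the kernel differences $|x-y|^{2m-n} - |x^\lambda-y|^{2m-n}$ are $O(|y|^{2m-n-1})$ for large $|y|$, which only improves integrability.
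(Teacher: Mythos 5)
Your proof is correct and follows essentially the same approach as the paper: subtract the integral representations to get \eqref{eq-w1}, then split the region of integration into $\Sigma_\lambda$ and its complement, apply the reflection substitution $y\mapsto y^\lambda$ on the complement, and use the isometry identities $|x-y^\lambda|=|x^\lambda-y|$ and $|x^\lambda-y^\lambda|=|x-y|$ to combine the two pieces. The paper organizes the same bookkeeping slightly differently (splitting $u_\ve(x)$ and $u_\ve(x^\lambda)$ individually before subtracting), but the computation and justification are identical in substance.
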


\begin{proof} 
The first identity is obvious from the definition of $w_{\ve, \lambda}$. The second identity follows from variable changes. Indeed, one can write
\begin{align*}
u_\ve (x) &= \Big( \int_{\Sigma_\lambda} + \int_{\R^n \setminus \Sigma_\lambda}\Big) |x-y| ^{2m-n}F_\ve (y) dy\\
&= \int_{\Sigma_\lambda} |x-y| ^{2m-n}F_\ve (y) dy +\int_{\Sigma_\lambda} |x-y^\lambda |^{2m-n} F_\ve (y^\lambda ) dy \\
&= \int_{\Sigma_\lambda} |x-y|^{2m-n} F_\ve (y) dy +\int_{\Sigma_\lambda} |x^\lambda-y |^{2m-n} F_\ve (y^\lambda ) dy.
\end{align*}
Similarly, one has
\begin{align*}
u_\ve (x^\lambda ) &= \int_{\Sigma_\lambda} |x^\lambda -y|^{2m-n} F_\ve (y) dy +\int_{\Sigma_\lambda} |x -y |^{2m-n} F_\ve (y^\lambda ) dy.
\end{align*}
By putting the above identities together we arrive at the second identity.
\end{proof}

Our next step is to show that the method of moving planes can start from a very large $\lambda_0 > 0$, where $\lambda_0$ is independent of $\ve$.

\begin{lemma}\label{lem-lambda0} Let $\ve^*\in (0,1)$ be fixed. Then
there exists $\lambda_0\gg 1$ such that for every $\ve\in [0, \ve^*]$ we have
\[
w_{\ve, \lambda}(x) \geq 0 \quad \text{in } \Sigma_\lambda
\]
for $\lambda\geq\lambda_0$. 
\end{lemma}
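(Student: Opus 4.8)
The plan is to use the integral representation \eqref{eq-w1} from Lemma \ref{lem-w} together with the uniform decay of $F_\ve$ in order to show that the sign of $w_{\ve,\lambda}(x)$ for $x\in\Sigma_\lambda$ with $\lambda$ large is controlled by the ``mass'' of $F_\ve$ concentrated near the origin. First I would record the elementary geometric fact that for $x\in\Sigma_\lambda$ and $y\in\R^n$ with $\lambda>0$ one has $|x-y|\le|x^\lambda-y|$ whenever $y_1\le\lambda$, hence in particular $|x-y|^{2m-n}-|x^\lambda-y|^{2m-n}\ge 0$ on the half-space $\{y_1\le\lambda\}$, which is where $F_\ve$ carries essentially all of its mass once $\lambda$ is large. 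So the only ``bad'' contribution to \eqref{eq-w1} comes from integrating over $\{y_1>\lambda\}\subset\Sigma_\lambda$, and there $|x^\lambda-y|^{2m-n}-|x-y|^{2m-n}\le |x^\lambda-y|^{2m-n}$; combined with the uniform bound \eqref{est-infty}, which by Theorem \ref{thm-uniform} (applied with a fixed $\ve^*$, and noting $\ve=0$ is allowed in the subcritical range) holds with a constant independent of $\ve\in[0,\ve^*]$, this bad term is bounded by $C\int_{\{y_1>\lambda\}}|x^\lambda-y|^{2m-n}(1+|y|)^{-2n}\,dy$.

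Second, I would produce the positive lower bound. Because $v$ is uniformly bounded below on $\S^n$ by Theorem \ref{thm-uniform}, the function $u_\ve$ satisfies $u_\ve(y)\le C(1+|y|)^{2m-n}$ with $C$ independent of $\ve\in[0,\ve^*]$, so from \eqref{def-F} we get $F_\ve(y)\ge c_0\,(1+|y|^2)^{-(n+2m)/2}$ on, say, the annulus $B_2\setminus B_1$, again uniformly in $\ve$. Feeding this into \eqref{eq-w1} and using that for $x\in\Sigma_\lambda$, $\lambda\ge\lambda_0$, and $y\in B_2\setminus B_1$ the kernel difference $|x-y|^{2m-n}-|x^\lambda-y|^{2m-n}$ is comparable to $\mathrm{dist}(x,T_\lambda)\cdot|x-y|^{2m-n-1}$ (it is strictly positive and bounded below on that annulus, since $2m-n\ge 1$ and $n$ is odd), one obtains
\[
w_{\ve,\lambda}(x)\ \ge\ \gamma_{2m,n}\Big(c_1\,\mathrm{dist}(x,T_\lambda)\,(|x|+1)^{2m-n-1} - C\int_{\{y_1>\lambda\}}|x^\lambda-y|^{2m-n}(1+|y|)^{-2n}\,dy\Big).
\]
A direct estimate of the error integral shows it is $O(\lambda^{-n})\,\mathrm{dist}(x,T_\lambda)\,(|x|+1)^{2m-n-1}$ uniformly for $x\in\Sigma_\lambda$ — one splits the region $\{y_1>\lambda\}$ according to whether $|y|$ is comparable to $\mathrm{dist}(x,T_\lambda)$ or much larger, using $2n>n$ for convergence at infinity and $2m-n<n$ so the near-diagonal part is integrable — and therefore for $\lambda_0$ chosen large enough the main term dominates and $w_{\ve,\lambda}(x)\ge 0$ throughout $\Sigma_\lambda$ for all $\lambda\ge\lambda_0$ and all $\ve\in[0,\ve^*]$.

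The main obstacle I expect is making the comparison between the ``good'' positive term and the ``bad'' tail term genuinely \emph{uniform in $\ve$} and uniform in $x\in\Sigma_\lambda$, including the delicate points $x$ very close to $T_\lambda$ (where both terms degenerate like $\mathrm{dist}(x,T_\lambda)$, so one must extract that common factor cleanly) and $x$ very far from the origin (where one must check the powers of $|x|$ match up on both sides). The uniformity in $\ve$ is exactly why Theorem \ref{thm-uniform} was stated with a constant depending only on $\ve^*$; without that one could not fix a single $\lambda_0$. A secondary technical nuisance is that for certain $\alpha$ the decay in \eqref{est-infty} is the governing one rather than the $\varepsilon$-term, but since \eqref{est-infty} already bounds the whole of $(1+|x|^{2m-n})F_{\ve,u}$ by $C(1+|x|^{2n})^{-1}$ with $C$ uniform, the same argument covers both the $\varepsilon u$ and the $u^{-\alpha}$ pieces of $F_\ve$ simultaneously, so no case distinction in $\alpha$ is actually needed here.
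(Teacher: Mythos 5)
Your overall strategy matches the paper's — use \eqref{eq-w1}, extract a positive contribution from a fixed bounded region (where $F_\ve$ is bounded below uniformly in $\ve$ via Theorem~\ref{thm-uniform}) and compare it against the potentially negative tail from $\{y_1>\lambda\}$ — and you even flag the right obstacle, namely that both contributions must vanish together like $\mathrm{dist}(x,T_\lambda)$ as $x\to T_\lambda$. But the estimate you then write down does not realize this. By discarding $|x-y|^{2m-n}$ and bounding the bad part by $\int_{\{y_1>\lambda\}}|x^\lambda-y|^{2m-n}F_\ve\,dy$, you destroy exactly the cancellation you identified as essential: as $x_1\to\lambda^+$ one has $x^\lambda\to x$, so this integral converges to the strictly positive quantity $\int_{\{y_1>\lambda\}}|x-y|^{2m-n}F_\ve\,dy$ and therefore cannot equal $O(\lambda^{-n})\,\mathrm{dist}(x,T_\lambda)\,(|x|+1)^{2m-n-1}$ as claimed. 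The fix is to keep the difference intact and use $|x-y|^{2}-|x^\lambda-y|^{2}=4(x_1-\lambda)(\lambda-y_1)$ to write
\[
|x-y|^{2m-n}-|x^\lambda-y|^{2m-n}=\frac{4(x_1-\lambda)(\lambda-y_1)}{|x-y|^{2m-n}+|x^\lambda-y|^{2m-n}}\,\widetilde P_\lambda(x,y),
\]
which pulls the factor $(x_1-\lambda)$ out of both the good and the bad contributions simultaneously. This is exactly the paper's route: one further normalizes by $|x|^{2+n-2m}$, works with $U_\ve(x)=|x|^{2+n-2m}w_{\ve,\lambda}(x)/(x_1-\lambda)$, and shows that the contribution of $B_1$ to $U_\ve$ is $\geq\lambda/C$ while the contribution of $\{y_1>\lambda\}$ is $\leq C$, both uniformly in $x\in\Sigma_\lambda$ and $\ve$.

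Two secondary inaccuracies point to the same missing step. First, for $y\in B_2\setminus B_1$ and $|x|\gg\lambda$ the kernel difference is of order $(x_1-\lambda)\,\lambda\,|x|^{2m-n-2}$ (from the identity above with $\lambda-y_1\approx\lambda$ and $|x-y|\approx|x^\lambda-y|\approx|x|$), not $(x_1-\lambda)|x|^{2m-n-1}$ as you assert; the two agree only when $|x|\approx\lambda$, so your lower bound on the good part is too optimistic by the factor $\lambda/|x|$ far from the origin. Second, writing $F_\ve\lesssim(1+|y|)^{-2n}$ drops the extra factor $(1+|y|^{2m-n})^{-1}$ supplied by \eqref{est-infty}; without it the integral $\int_{\{y_1>\lambda\}}|x^\lambda-y|^{2m-n}(1+|y|)^{-2n}\,dy$ can already diverge at infinity when $m\ge n$, since the integrand there decays only like $|y|^{2m-3n}$. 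Both issues disappear once the $(x_1-\lambda)$-factoring and the full decay $F_\ve\lesssim(1+|y|^{2m+n})^{-1}$ are used, as in the paper.
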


\begin{proof} 
We start the proof by observing the existence of some constant $C>0$ such that for each $\ve\in [0,\ve^*]$ we have 
\begin{align}\label{eq-C<F<C}
\frac1C \frac{1}{1+|y|^{2m+n}}\leq F_\ve (y) \leq C\frac{1}{1+|y|^{2m+n}}\quad\text{in }\R^n;
\end{align}
see \eqref{est-infty} for a similar estimate. In the case $\ve > 0$, this simply follows from the uniform bound for $v_\ve$ with respect to $\ve\in(0, \ve^*]$ as given by Theorem \ref{thm-uniform}. In the case $\ve = 0$, the above estimate is trivial because $u(x) \approx |x|^{2m-n}$ for $|x| \gg1$. By a simple algebraic computations we have 
\begin{align*} &|x-y|^{2m-n}-|x^\lambda-y|^{2m-n}=\frac{ |x-y|^{2}-|x^\lambda-y|^{2} } { |x-y|^{2m-n}+|x^\lambda-y|^{2m-n}} \widetilde P_\lambda(x,y) ,
\end{align*} 
where the function $\widetilde P_\lambda$ is given  by 
\[
\widetilde P_{\lambda}(x,y):=\sum_{k=0}^{2m-n-1} |x-y|^{2(2m-n-1-k)}|x^\lambda-y|^{2k}.
\]
(It is clear that $\widetilde P_\lambda\equiv 1$ if $2m-n=1$.) Using \eqref{eq-w1} and 
\[
|x-y|^2-|x^\lambda -y|^2=4(x_1-\lambda)(\lambda-y_1)
\]
we can write 
\[
|x|^{2+n-2m}\frac{w_{\ve,\lambda}(x) }{x_1-\lambda}= \int_{\R^n}(\lambda-y_1) P_{\lambda}(x,y) F_\ve(y)dy=: U_\ve(x),
\] 
where 
\begin{align}\label{defP}
P_\lambda(x,y):=4\gamma_{2m,n} \frac{ |x|^{2+n-2m}}{ |x-y|^{2m-n}+|x^\lambda-y|^{2m-n}} \widetilde P_\lambda(x,y).
\end{align}
For later use, we note that for $x,y\in\Sigma_\lambda$ there holds
\begin{align} \label{estP}
P_\lambda(x,y) &\leq C|x|^{2+n-2m}\frac{|x-y|^{2(2m-n-1)} +|x^\lambda-y|^{2(2m-n-1)} }{|x-y|^{2m-n}+|x^\lambda-y|^{2m-n}} \notag\\ 
&\leq C\left\{\begin{aligned}
& \frac{|x|}{|x-y|} & & \text{for }2m-n=1\\ 
&1+|x|^{2+n-2m} |y|^{2m-n-2} & & \text{for }2m-n\geq3\end{aligned}\right. \\ 
&\leq C\left\{\begin{aligned}
& \frac{|x|}{|x-y|} && \text{for }2m-n=1\\ 
&|y|^{2m-n-2} && \text{for }2m-n\geq3\notag.\end{aligned}\right. 
\end{align} 
To conclude the lemma, it suffices to show the existence of $\lambda_0 \gg 1$ such that 
\[
U_\ve(x)>0 \quad \text{for any } x \in \Sigma_\lambda \cup T_\lambda
\]
and for every $\lambda\geq\lambda_0$. With the  help of \eqref{eq-C<F<C} we can roughly estimate
\begin{align*}
U_\ve(x) &=\int_{B_1} (\lambda-y_1) P_{\lambda}(x,y) F_\ve(y)dy 
+ \int_{\R^n \setminus B_1} (\lambda-y_1) P_{\lambda}(x,y) F_\ve(y)dy\\
&\geq\frac 1C\int_{B_1} (\lambda-y_1) P_{\lambda}(x,y) dy
 + \int_{y_1 > \lambda} (\lambda-y_1) P_{\lambda}(x,y) F_\ve(y)dy\\
&\geq\frac 1C\int_{B_1} (\lambda-y_1) P_{\lambda}(x,y) dy
 - C\int_{y_1>\lambda} \frac{P_{\lambda}(x,y)}{1+|y|^{2m+n-1}} dy\\
&=: I_{1}(x)-I_{2}(x).
\end{align*}
Here to get the term $I_2$ we have used the estimates $0 \leq y_1 - \lambda \leq y_1 \leq |y|$ in the region $\{ y \in \R^n : y_1 > \lambda\}$ and 
\[
\frac{|y|}{1+|y|^{2m+n}} \leq \frac 2{1+|y|^{2m+n-1}} \quad \text{for all } y.
\] 
Next, we estimate $I_1$ from below and $I_2$ from above. For $I_1$, we note that 
\[
P_{\lambda}(x,y)\geq\frac1C\quad\text{for } y\in B_1, \, x\in \Sigma_\lambda, \, \lambda\geq\lambda_0\gg 1.
\]
From this we deduce 
\[
I_1(x)\geq \frac\lambda C.
\]
We now estimate $I_2$. For $2m-n\geq 3$ and as 
\[
\frac{|y|^{2m-n-2}}{1+|y|^{2m+n-1}} \leq \frac 2{1+|y|^{2n+1}} \quad \text{for all } y
\] 
and $|y| \geq y_1 > \lambda $ we can estimate
\begin{align*} 
I_2(x)\leq C\int_{y_1>\lambda}\frac{|y|^{2m-n-2} dy}{1+|y|^{2m+n-1}} 
\leq C\int_{y_1>\lambda}\frac{dy}{1+|y|^{2n+1}}\leq \frac{C}{\lambda^{n+1}} \leq C. 
\end{align*} 
For $2m-n=1$, we split $\{y_1>\lambda\}$ as follows
\[
\{y_1>\lambda\}\subset A_1\cup A_2\cup A_3
\]
where 
\[
A_1:=\big\{y:\lambda<|y|\leq |x|/2\big\}, 
\quad A_2:= B_{2|x|}\setminus B_{|x|/2},
\quad A_3:= \R^n \setminus B_{2|x|}.
\]
(Although $|x| >\lambda$ as $x \in \Sigma_\lambda$, the set $A_1$ could be empty if $|x| < 2\lambda$, but it is not important.) Since $|x-y|\geq |x|/2$ on $A_1\cup A_3$ and again $|y| \geq y_1 > \lambda $, we can estimate 
\[
\int_{ A_1\cup A_3} \frac{|x|}{|x-y|} \frac{dy}{1+|y|^{2m+n-1}}\leq \frac {C}{\lambda^{2m-1}}.
\]
On the remaining set $A_2$ as $|x|/2 \leq |y| \leq 2|x|$ we easily get 
\[
\int_{A_2} \frac{|x|}{|x-y|} \frac{dy}{1+|y|^{2m+n-1}}\leq \frac{C}{|x|^{2m+n-2}}\int_{A_2}\frac{dy}{|x-y|}\leq \frac{C}{|x|^{2m-1}}\leq \frac{C}{\lambda^{2m-1}} \leq C. \] 
Putting the above estimate together, we arrive at
\[
U_\ve(x) \geq I_{1}(x)-I_{2}(x) \geq \frac\lambda C - C
\]
for some constant $C>0$. Thus, the lemma follows by letting $\lambda_0$ large enough. 
\end{proof}

In Lemma \ref{lem-lambda0}, we have compared $u_\ve (x) $ and $u_\ve (x^\lambda)$, via $w_{\ve, \lambda} (x)$, in $\Sigma_\lambda$. As there was no restriction on $\ve^* \in (0,1)$, our comparison requires large $\lambda >0$ to hold. In the next lemma, we compare $F_\ve(x)$ and $F_\ve(x^\lambda)$ in $\Sigma_\lambda$. As there will be no restriction on $\lambda>0$, our comparison now requires small $\ve>0$, and this is the place where the constant $\ve_*$ appears. Due to the form of $F_\ve$ to achieve the goal we need the compactness result established earlier; see section \ref{sec-compact}.

\begin{lemma}\label{lem-ve_*}
There exists $\ve_* \in (0, \ve^*)$ small enough such that for arbitrary $\lambda\in (0,\lambda_0]$ but fixed, the conclusion if 
\begin{equation}\label{est-w>=0}
w_{\ve,\lambda}\geq0 \quad \text{in } \Sigma_\lambda,
\end{equation} 
then
\begin{equation}\label{est-F<=F}
F_\ve(x)-F_\ve(x^\lambda)\leq 0\quad\text{in }\Sigma_\lambda
\end{equation} 
holds for each $\ve\in [0,\ve_*)$. In addition, if the inequality \eqref{est-w>=0} is strict, then so is the inequality \eqref{est-F<=F}.
\end{lemma}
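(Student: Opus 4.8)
The plan is to reduce \eqref{est-F<=F} to an elementary pointwise inequality and then to absorb the only term of indefinite sign using the $\ve$-uniform bounds supplied by Theorem~\ref{thm-uniform}. Fix $\lambda\in(0,\lambda_0]$ (with $\lambda_0$ as in Lemma~\ref{lem-lambda0}) and a point $x\in\Sigma_\lambda$, and abbreviate $a:=u_\ve(x)$, $b:=u_\ve(x^\lambda)$, $s:=f(x)$, $t:=f(x^\lambda)$, where $f(x)=2/(1+|x|^2)$. Since $\lambda>0$ and $x_1>\lambda$ one has $|x|^2-|x^\lambda|^2=4\lambda(x_1-\lambda)>0$, hence $0<s<t$, while \eqref{est-w>=0} reads $a\geq b>0$. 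Recalling from \eqref{def-F} that $F_\ve=\ve f^{2m}u_\ve+f^{-c_\alpha}u_\ve^{-\alpha}$ (indeed $\tfrac{n+2m}{2}+\alpha\tfrac{n-2m}{2}=-c_\alpha$), I would write
\[
F_\ve(x)-F_\ve(x^\lambda)=\ve\big(s^{2m}a-t^{2m}b\big)+\big(s^{-c_\alpha}a^{-\alpha}-t^{-c_\alpha}b^{-\alpha}\big)
\]
and estimate the two brackets separately, using $b\geq0$, $a^{-\alpha}>0$, $0<s<t$ and $-c_\alpha\geq0$:
\[
s^{2m}a-t^{2m}b=s^{2m}(a-b)+b(s^{2m}-t^{2m})\leq s^{2m}(a-b),
\]
\[
s^{-c_\alpha}a^{-\alpha}-t^{-c_\alpha}b^{-\alpha}=t^{-c_\alpha}(a^{-\alpha}-b^{-\alpha})+a^{-\alpha}(s^{-c_\alpha}-t^{-c_\alpha})\leq t^{-c_\alpha}(a^{-\alpha}-b^{-\alpha}),
\]
together with $a^{-\alpha}-b^{-\alpha}=-\alpha\int_b^a\xi^{-\alpha-1}\,d\xi\leq-\alpha\,a^{-\alpha-1}(a-b)$, valid since $a\geq b$ and $\xi\mapsto\xi^{-\alpha-1}$ is nonincreasing. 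Combining the three,
\[
F_\ve(x)-F_\ve(x^\lambda)\leq(a-b)\big(\ve\,s^{2m}-\alpha\,t^{-c_\alpha}a^{-\alpha-1}\big)=(a-b)\,f(x)^{2m}\big(\ve-\kappa_\ve(x,\lambda)\big),
\]
where $\kappa_\ve(x,\lambda):=\alpha\,f(x^\lambda)^{-c_\alpha}/(f(x)^{2m}u_\ve(x)^{\alpha+1})>0$. Since $a-b\geq0$, it now suffices to prove that $\kappa_\ve$ is bounded below by a positive constant, uniformly in $x\in\Sigma_\lambda$, in $\lambda\in(0,\lambda_0]$ and in $\ve$.

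For this second step I would use three ingredients. First, Theorem~\ref{thm-uniform}, transferred to $\R^n$ through \eqref{def-u}, provides a constant $c_1\geq1$ independent of $\ve$ in the range considered in this section---that is, $\ve\in[0,\ve^*)$ if $\alpha<(n+2m)/(2m-n)$, and $\ve\in(0,\ve^*)$ if $\alpha=(n+2m)/(2m-n)$---such that $c_1^{-1}(1+|x|^2)^{(2m-n)/2}\leq u_\ve(x)\leq c_1(1+|x|^2)^{(2m-n)/2}$ on $\R^n$. Second, the elementary geometric estimate $1+|x^\lambda|^2\leq C_{\lambda_0}(1+|x|^2)$ for $x\in\Sigma_\lambda$, $\lambda\leq\lambda_0$, which combined with $c_\alpha\leq0$ yields $f(x^\lambda)^{-c_\alpha}=2^{-c_\alpha}(1+|x^\lambda|^2)^{c_\alpha}\geq 2^{-c_\alpha}C_{\lambda_0}^{c_\alpha}(1+|x|^2)^{c_\alpha}$. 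Third, $f(x)^{2m}=2^{2m}(1+|x|^2)^{-2m}$. Plugging these into the definition of $\kappa_\ve$,
\[
\kappa_\ve(x,\lambda)\geq c_0\,(1+|x|^2)^{\theta},\qquad \theta:=2m+c_\alpha-\tfrac{2m-n}{2}(\alpha+1),
\]
for some $c_0>0$ depending only on $n,m,\alpha,\lambda_0,\ve^*$; and substituting $c_\alpha=\tfrac{2m-n}{2}\alpha-\tfrac{2m+n}{2}$ gives $\theta=2m-\tfrac{2m+n}{2}-\tfrac{2m-n}{2}=0$. Hence $\kappa_\ve(x,\lambda)\geq c_0>0$ for all admissible $x,\lambda,\ve$. (This exact cancellation is no accident; it encodes the conformal weights built into the definition \eqref{def-F}.)

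Finally I would set $\ve_*:=\min\{c_0,\ve^*/2\}\in(0,\ve^*)$. Then for every $\ve\in[0,\ve_*)$, every $\lambda\in(0,\lambda_0]$ and every $x\in\Sigma_\lambda$ we have $\ve<c_0\leq\kappa_\ve(x,\lambda)$, so the last display of the first step gives $F_\ve(x)-F_\ve(x^\lambda)\leq0$, which is \eqref{est-F<=F}. Moreover, if \eqref{est-w>=0} holds strictly at $x$, so that $a>b$ there, then $(a-b)\,f(x)^{2m}\big(\ve-\kappa_\ve(x,\lambda)\big)<0$ because $\ve-\kappa_\ve(x,\lambda)\leq\ve-c_0<0$, giving the strict conclusion $F_\ve(x)-F_\ve(x^\lambda)<0$.

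The pointwise algebra of the first step is routine; the real content, and the only genuine obstacle, is the uniformity in $\ve$ of the threshold $\ve_*$. A priori both $u_\ve$ and $\kappa_\ve$ depend on $\ve$, and a threshold depending on the solution would be of no use in the moving-plane iteration carried out in the lemmas below; this is exactly why Theorem~\ref{thm-uniform} was proved with a constant independent of $\ve$. Getting the power count $\theta=0$ right is the other point to watch, but it is dictated by the structure of \eqref{def-F}.
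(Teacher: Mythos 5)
Your proof is correct and it follows the same overall strategy as the paper: decompose $F_\ve(x)-F_\ve(x^\lambda)$ into the $\ve$-term and the negative-power term, discard the sign-definite parts using $|x^\lambda|<|x|$ in $\Sigma_\lambda$ together with $-c_\alpha\geq 0$, and then absorb the remaining $\ve$-term using the $\ve$-uniform bound on $u_\ve$ supplied by Theorem~\ref{thm-uniform}. The difference is in the final absorption step. The paper reduces matters to the inequality
\[
\frac{u_\ve^\alpha(x) -u_\ve^\alpha(x^\lambda)}{u_\ve(x)-u_\ve(x^\lambda)}\cdot\frac{1}{u_\ve^\alpha(x)\, u_\ve^\alpha (x^\lambda)}
\geq \ve\Big(\frac{2}{1+|x|^2}\Big)^{(2m-n)\frac{1+\alpha}{2}}\quad\text{in }\Sigma_\lambda,
\]
and proves it by splitting $\Sigma_\lambda$ into $\Sigma_\lambda\cap B_R$ (compactness and smoothness) and $\Sigma_\lambda\setminus B_R$ (where $|x|\approx|x^\lambda|$ and the growth of $u_\ve$ are used), obtaining two thresholds $\ve_1,\ve_2$ whose minimum defines $\ve_*$. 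You bypass the case split by using the elementary convexity bound $a^{-\alpha}-b^{-\alpha}\leq-\alpha\,a^{-\alpha-1}(a-b)$, which yields the single global estimate $F_\ve(x)-F_\ve(x^\lambda)\leq(a-b)\,f(x)^{2m}\big(\ve-\kappa_\ve(x,\lambda)\big)$, and then the threshold $c_0$ falls out in one line from the power-counting identity $\theta=2m+c_\alpha-\tfrac{2m-n}{2}(\alpha+1)=0$. Your route is tidier, makes the role of the conformal exponent $c_\alpha$ transparent, absorbs the trivial $\ve=0$ case automatically, and correctly flags that the uniform bound from Theorem~\ref{thm-uniform} is only available on the admissible parameter range. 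Both arguments are correct; yours is simply a cleaner execution of the same underlying idea.
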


\begin{proof}
Let us first be interested in the existence of $\ve_*$ and $\ve \in (0, \ve_*)$. As $|x^\lambda|<|x|$ for $\lambda>0$ and $x\in\Sigma_\lambda$, we obtain 
\begin{align*} 
F_\ve(x)-F_\ve(x^\lambda) &= \ve\big( \frac{2}{1+|x|^2}\big)^{2m} u_\ve(x)-\ve\big(\frac{2}{1+|x^\lambda|^2}\big)^{2m} u_\ve(x^\lambda) \\ 
&\quad +\big( \frac{2}{1+|x|^2}\big)^{-c_\alpha}\frac{1}{u_\ve^\alpha(x)}-\big(\frac{2}{1+|x^\lambda|^2}\big)^{-c_\alpha}\frac{1}{u_\ve^\alpha(x^\lambda)} \\ 
&\leq \ve\big( \frac{2}{1+|x|^2}\big)^{2m}(u_\ve(x)-u_\ve(x^\lambda)) \\
&\quad+\big( \frac{2}{1+|x|^2}\big)^{-c_\alpha} \Big( \frac{1}{u_\ve^\alpha(x)}- \frac{1}{u_\ve^\alpha(x^\lambda)}\Big),
\end{align*} 
where the constant $c_\alpha \leq0$ is already given in \eqref{c_alpha}. Hence, to prove \eqref{est-F<=F} in $\Sigma_\lambda$, it suffices to prove that
\begin{align}\label{est-2} 
\frac{u_\ve^\alpha(x) -u_\ve^\alpha(x^\lambda)}{u_\ve(x)-u_\ve(x^\lambda)} \frac{1}{u_\ve^\alpha(x) u_\ve^\alpha (x^\lambda)} \geq \ve \big( \frac{2}{1+|x|^2}\big)^ {(2m-n)\frac{1+\alpha}{2} }\quad\text{in }\Sigma_\lambda,
\end{align} where we have used that 
\[
2m+c_\alpha=(2m-n)\frac{1+\alpha}{2}.
\]
To this end, for some $R \gg 1$ to be specified later, we first split $\Sigma_\lambda$ into two parts as follows: 
\[
\Sigma_\lambda = \big[\Sigma_\lambda \cap B_R \big] \cup \big[\Sigma_\lambda \setminus B_R \big].
\]
In the region $\Sigma_\lambda \setminus B_R$, there exists some $\ve_1>0$ such that \eqref{est-2} holds. To see this we need to use uniform bounds with respect to $\ve>0$, see Theorem \ref{thm-uniform}, to obtain 
\[
\frac{u_\ve^\alpha(x) -u_\ve^\alpha(x^\lambda)}{u_\ve(x)-u_\ve(x^\lambda)} \geq \ve_1 \big( \frac{2}{1+|x|^2}\big)^{(2m-n)\frac{1-\alpha}{2}}
\]
and
\[
\frac{1}{u_\ve^\alpha(x) u_\ve^\alpha (x^\lambda)} \geq \ve_1 \big( \frac{2}{1+|x|^2}\big)^\alpha
\]
for some small $\ve_1 \in (0,1)$. This is mainly because when $R$ is large enough, we have $|x|\approx |x^\lambda|$ for $|x|>R$ and $\lambda\in (0,\lambda_0]$. In the region $\Sigma_\lambda \cap B_R$, by the smoothness of $u_\ve$, there exists some small $\ve_2 \in (0,1)$ such that 
\begin{align}\label{est-5.5} 
\frac{u_\ve^\alpha(x) -u_\ve^\alpha(x^\lambda)}{u_\ve(x)-u_\ve(x^\lambda)} \frac{1}{u_\ve^\alpha(x) u_\ve^\alpha (x^\lambda)} \geq \ve_2 \big( \frac{2}{1+|x|^2}\big)^{(2m-n)\frac{1+\alpha}{2}}
\end{align}
for any $x\in B_R$. Hence, combining \eqref{est-2} and \eqref{est-5.5} yields the desired estimate \eqref{est-F<=F} with 
\[
\ve_*= \frac 12 \min\{\ve_1 ,\ve_2\}.
\]
Now we consider the remaining case $\ve = 0$. However, this case is trivial because
\begin{align*} 
F_0 (x)-F_0 (x^\lambda) &= \big( \frac{2}{1+|x|^2}\big)^{-c_\alpha} \Big( \frac{1}{u_0^\alpha(x)}- \frac{1}{u_0^\alpha(x^\lambda)}\Big) \leq 0
\end{align*} 
whenever $w_{0,\lambda} (x) = u_0 (x)-u_0 (x^\lambda) \geq 0$. Finally, from the above calculation, it is clear that if the inequality \eqref{est-w>=0} is strict, then the inequality \eqref{est-F<=F} is also strict. Hence, the lemma is proved.
\end{proof}
 
Thanks to Lemma \ref{lem-lambda0}, for each $\ve>0$ we can set 
\[
\overline \lambda_\ve:=\inf \big\{ \lambda>0: w_{\ve, \mu}\geq0 \; \text{ in } \; \Sigma_\mu \; \text{ for every } \; \mu\geq\lambda \big\}.
\]
Then, still by Lemma \ref{lem-lambda0}, we necessarily have 
\[
0 \leq \overline \lambda_\ve\leq\lambda_0.
\]
Our goal is to  show that $\overline \lambda_\ve = 0$. This can be done through two steps. First we show that if $\overline \lambda_\ve > 0$, then we must have $w_{\ve, \overline \lambda_\ve} \equiv 0$ in $\Sigma_{\overline \lambda_\ve}$; see Lemma \ref{lem-barlambda>0}. Finally, we show that $\overline \lambda_\ve = 0$; see Lemma \ref{lem-barlambda=0}. 

Our next lemma is of importance to achieve the first step as it allows us to move $\lambda$ to the left.

\begin{lemma}\label{lem-1} 
Let $\ve\in [0,\ve_*)$ and $\bar\lambda\in(0,\lambda_0]$ be such that 
\[
0\not\equiv w_{\ve,\bar\lambda}\geq0 \quad \text{in } \Sigma_{\bar\lambda}.
\]
Then, there exist $R \gg 1$ and $\delta>0$ small, both may depend on $w_{\ve,\bar\lambda}$, such that for every $\lambda\in ( \bar\lambda-\delta ,\bar\lambda)$ we have 
\[
w_{\ve,\lambda} >0\quad\text{in } \Sigma_{\lambda}\setminus B_R.
\]
\end{lemma}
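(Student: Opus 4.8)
The plan is to exploit the integral representation \eqref{eq-w2} of $w_{\ve,\lambda}$ together with the comparison \eqref{est-F<=F} from Lemma \ref{lem-ve_*}, and then to argue by a "sign-strictness propagates" mechanism near $\bar\lambda$. First I would observe that, since $0\not\equiv w_{\ve,\bar\lambda}\ge 0$ in $\Sigma_{\bar\lambda}$, the strict form of \eqref{est-F<=F} gives $F_\ve(y^{\bar\lambda})-F_\ve(y)\ge 0$ in $\Sigma_{\bar\lambda}$ with strict inequality on a set of positive measure; feeding this back into \eqref{eq-w2}, where the kernel factor $|x^{\bar\lambda}-y|^{2m-n}-|x-y|^{2m-n}$ is strictly positive for $x,y\in\Sigma_{\bar\lambda}$, yields $w_{\ve,\bar\lambda}>0$ \emph{everywhere} in the open set $\Sigma_{\bar\lambda}$. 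This upgrades the hypothesis $w_{\ve,\bar\lambda}\ge 0$ to a strict inequality, which is the key gain.

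Next I would fix a large $R\gg 1$, to be chosen, and split $\Sigma_\lambda\setminus B_R$ into a "core" annular piece and a "tail". On a compact set such as $\overline{\Sigma_{\bar\lambda-1}}\cap(\overline{B_{2R}}\setminus B_R)$ (a closed, bounded region staying a definite distance from $T_{\bar\lambda}$), the function $w_{\ve,\bar\lambda}$ attains a positive minimum by the strict positivity just established, so by continuity of $(\lambda,x)\mapsto w_{\ve,\lambda}(x)$—which follows from \eqref{eq-w1} and the uniform decay \eqref{eq-C<F<C} of $F_\ve$—there is $\delta_1>0$ with $w_{\ve,\lambda}>0$ there for all $\lambda\in(\bar\lambda-\delta_1,\bar\lambda)$. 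For the genuinely far region, $|x|\ge 2R$, I would return to the formula $|x|^{2+n-2m}w_{\ve,\lambda}(x)/(x_1-\lambda)=U_\ve(x)=\int_{\R^n}(\lambda-y_1)P_\lambda(x,y)F_\ve(y)\,dy$ used in Lemma \ref{lem-lambda0}; using the pointwise bounds \eqref{estP} on $P_\lambda$ and the decay \eqref{eq-C<F<C} of $F_\ve$, the contribution from $\{y_1>\lambda\}$ is $O(R^{-(n+1)})$ (resp.\ $O(R^{-(2m-1)})$ when $2m-n=1$), while the contribution from $B_1$, where $P_\lambda\ge 1/C$, is bounded below by a positive constant uniformly for $\lambda\in(\bar\lambda-1,\bar\lambda)$ and $|x|\ge 2R$; choosing $R$ large makes $U_\ve(x)>0$, hence $w_{\ve,\lambda}(x)>0$, throughout $\{|x|\ge 2R\}\cap\Sigma_\lambda$, uniformly in such $\lambda$. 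It remains to handle the intermediate shell $R\le|x|\le 2R$ with $x\in\Sigma_\lambda$ but possibly close to $T_\lambda$: here I would note that as $\lambda\uparrow\bar\lambda$ this shell is swept by points $x$ with $x^\lambda$ close to $x^{\bar\lambda}$, and the same compactness-plus-continuity argument as on the core piece (now on $\{\,R\le|x|\le 2R,\ x_1\ge\lambda\,\}$, using that the kernel difference in \eqref{eq-w2} degenerates only on $T_\lambda$ itself while $F_\ve(y^\lambda)-F_\ve(y)$ is controlled by Lemma \ref{lem-ve_*}) gives a $\delta_2>0$; taking $\delta=\min\{\delta_1,\delta_2\}$ finishes the proof.

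The main obstacle I expect is the behaviour of $w_{\ve,\lambda}$ \emph{near the moving plane} $T_\lambda$ for $\lambda$ slightly below $\bar\lambda$: there $w_{\ve,\lambda}(x)$ and the kernel difference both vanish, so strict positivity cannot be read off directly from a lower bound on $F_\ve(y^\lambda)-F_\ve(y)$, and one must instead use the factorization $w_{\ve,\lambda}(x)=(x_1-\lambda)\,|x|^{2m-2-n}U_\ve(x)$ together with a uniform (in $\lambda$ near $\bar\lambda$) positive lower bound on $U_\ve$ on the relevant region. Establishing that uniform lower bound—combining the interior strict positivity of $w_{\ve,\bar\lambda}$, the continuity in $\lambda$ afforded by \eqref{eq-w1} and \eqref{eq-C<F<C}, and the tail estimates from \eqref{estP}—is the technical heart of the argument; everything else is a routine splitting of $\Sigma_\lambda\setminus B_R$ into finitely many regions on each of which either compactness or decay applies.
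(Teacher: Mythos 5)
Your opening observation is sound and in fact matches the paper's starting point: from $0\not\equiv w_{\ve,\bar\lambda}\geq 0$, Lemma~\ref{lem-ve_*} together with the representation \eqref{eq-w2} forces $0\not\equiv F_\ve(y^{\bar\lambda})-F_\ve(y)\geq 0$ on $\Sigma_{\bar\lambda}$, and hence (as you note) $w_{\ve,\bar\lambda}>0$ in the interior. The difficulty, which you correctly identify, is the region near the moving plane $T_\lambda$ at large $|x|$, where one must show $U_\ve(x)>0$ uniformly for $\lambda$ slightly below $\bar\lambda$.

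The gap is in your far-field estimate of $U_\ve$. You propose to work directly with the representation $U_\ve(x)=\int_{\R^n}(\lambda-y_1)P_\lambda(x,y)F_\ve(y)\,dy$ coming from \eqref{eq-w1} and to argue, as in Lemma~\ref{lem-lambda0}, that the $B_1$ contribution has a uniform positive lower bound while the $\{y_1>\lambda\}$ contribution is $O(R^{-(n+1)})$. Neither claim survives for $\lambda\in(0,\lambda_0]$ bounded. In Lemma~\ref{lem-lambda0} the lower bound $\int_{B_1}(\lambda-y_1)P_\lambda F_\ve\,dy\gtrsim\lambda$ uses $\lambda\geq\lambda_0\gg 1$ so that $\lambda-y_1>0$ throughout $B_1$; for $\lambda<1$ the integrand changes sign on $B_1$ and the $B_1$ term is not manifestly positive (and need not be, absent further input). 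Likewise, the integral of $|\lambda-y_1|P_\lambda F_\ve$ over $\{y_1>\lambda\}$ is only $O(1)$ when $\lambda$ is bounded, not $O(R^{-(n+1)})$; it decays in $\lambda$, not in $R$. If this direct estimate worked for all $\lambda>0$ it would prove Lemma~\ref{lem-lambda0} with no largeness assumption on $\lambda$ and without invoking the hypothesis $w_{\ve,\bar\lambda}\geq 0$ at all, which cannot be right: the hypothesis is what makes the lemma true.

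What is missing is precisely the quantitative use of the hypothesis through the \emph{signed} representation \eqref{eq-w2} rather than \eqref{eq-w1}. The paper's mechanism is: (i) from $0\not\equiv F_\ve(y^{\bar\lambda})-F_\ve(y)\geq 0$ conclude $\int_{\Sigma_{\bar\lambda}}(y_1-\bar\lambda)[F_\ve(y^{\bar\lambda})-F_\ve(y)]\,dy\geq 2c_0>0$; (ii) by dominated convergence this persists, giving $\int_{\Sigma_\lambda}(y_1-\lambda)[F_\ve(y^\lambda)-F_\ve(y)]\,dy\geq c_0$ for all $\lambda$ in a small interval $(\bar\lambda-\delta,\bar\lambda)$; (iii) prove that $P_\lambda(x,y)\to\theta>0$ uniformly for $y$ in any fixed ball as $|x|\to\infty$ (a genuinely new ingredient compared to the crude bounds \eqref{estP}); (iv) split $\int_{\Sigma_\lambda}$ into $B_{R_2}$ and its complement, control the tail using the decay \eqref{eq-C<F<C}, and replace $P_\lambda$ by $\theta$ up to a small error on $B_{R_2}$, so that the inner integral inherits the lower bound $c_0$ from (ii). Your draft never establishes (ii) or (iii), and without them the factorization $w_{\ve,\lambda}(x)=(x_1-\lambda)|x|^{2m-2-n}U_\ve(x)$—which you correctly flag as the technical heart—has no source of strict positivity for small $\lambda$.

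As a secondary point, the compactness-plus-continuity argument on the annular piece is also not closed: for $\lambda<\bar\lambda$, the relevant set $\Sigma_\lambda\cap(B_{2R}\setminus B_R)$ contains points $x$ with $\lambda<x_1<\bar\lambda$, where $w_{\ve,\bar\lambda}(x)$ is \emph{negative} (since $w_{\ve,\bar\lambda}$ is odd across $T_{\bar\lambda}$), so uniform continuity in $\lambda$ cannot transfer positivity there. The only viable route is again the $U_\ve$-factorization together with the quantitative bound of step (ii)–(iv); that argument handles the whole of $\Sigma_\lambda\setminus B_R$ at once and renders the separate compact-set argument unnecessary.
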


\begin{proof} 
Using the representation \eqref{eq-w2}, and as in the first part of the proof of Lemma \ref{lem-lambda0}, we have
\begin{align}\label{eq-5.6}
w_{\ve, \lambda }(x)\frac{|x|^{2+n-2m}}{x_1-\lambda}&= \int_{\Sigma_\lambda} (y_1-\lambda)P_\lambda(x,y) [F_\ve(y^\lambda)-F_\ve(y )]dy,
\end{align} 
where $P_\lambda$ is given by \eqref{defP}. In view of \eqref{eq-5.6}, it suffices to show that its RHS is positive in $\Sigma_\lambda \setminus B_R$ for suitable $R>0$. For convenience, we recall the following formula for $P_\lambda$
\begin{align*}
P_\lambda(x,y) =4 \gamma_{2m,n}\frac{ |x|^{2+n-2m} }{ |x-y|^{2m-n}+|x^\lambda-y|^{2m-n}} \sum_{k=0}^{2m-n-1} |x-y|^{2(2m-n-1-k)}|x^\lambda-y|^{2k}.
\end{align*}
Hence, there exists some $\theta>0$ such that for every $R_1>0$ fixed
\begin{align}\label{expan-P} 
P_\lambda(x,y) \rightrightarrows \theta
\quad \text{ uniformly in } y\in B_{R_1}
\end{align}
as $|x|\to\infty$. This is because $|x| \approx |x-y| \approx |x^\lambda - y|$ for large $|x|$. From \eqref{est-F<=F} we know that
\[
0\not\equiv F_\ve(y^{\bar\lambda})-F_\ve(y)\geq0\quad\text{for }y\in \Sigma_{\bar\lambda},
\]
which implies
\[
\int_{\Sigma_{\bar\lambda}}(y_1-\bar\lambda) [F_\ve(y^{\bar\lambda})-F_\ve(y )]dy\geq 2c_0>0,
\]
for some small constant $c_0>0$. Thus, by the dominated convergence theorem, we can find some $\delta>0$ such that 
\begin{align}\label{c_0} 
\int_{\Sigma_\lambda}(y_1-\lambda) [F_\ve(y^\lambda)-F_\ve(y )]dy\geq c_0>0 ,
\end{align}
for every $|\lambda-\bar\lambda|<\delta$. To obtain the positivity of the right hand side of \eqref{eq-5.6}, we split the integral $\int_{\Sigma_\lambda}$ into two parts as follows 
\[
\int_{\Sigma_\lambda} = \int_{\Sigma_\lambda\setminus B_{R_2}} + \int_{\Sigma_\lambda \cap B_{R_2}}
\]
for some $R_2>0$ to be determined later and estimate these integrals term by term; see the two estimates \eqref{estimate-outside} and \eqref{estimate-inside} below. Our aim is to show that the integral $\int_{\Sigma_\lambda\setminus B_{R_2}}$ is negligible.

We assume for a moment that such a constant $R_2$ exists. We now estimate the integral $\int_{\Sigma_\lambda\setminus B_{R_2}} $. First we  choose a $R_0 \gg 1$ in such a way that $|y_1-\lambda| < 2|y|$ for all $|y| \geq R_0$. Then we find some $R_1 \gg R_0$ such that in $\Sigma_\lambda \setminus B_{R_1}$ we have
\begin{equation}\label{estimate-outside-noP}
\int_{\Sigma_\lambda\setminus B_{R_1}} \frac{ dy}{1+|y|^{2m+n -1}} 
\leq \frac {\theta c_0}{16C}
\end{equation} 
and
\[
F_\ve(y)+F_\ve(y^\lambda)\leq\frac{C}{1+|y|^{2m+n}}
\]
for some $C>0$ because $|y| \approx |y^\lambda|$. By the estimate \eqref{estP} for $P_\lambda$, we now claim that there are some $R_3 \gg 1$ and $R_2 \gg R_1$ such that 
\begin{equation}\label{estimate-outside}
\int_{\Sigma_\lambda\setminus B_{R_2}}(y_1-\lambda) P_\lambda(x,y)[F_\ve(y^\lambda)+F_\ve(y )]dy\leq \frac {\theta c_0}4
\end{equation} 
for $|x| \geq R_3$. To see this, for clarity, we consider the two cases $2m-n=1$ and $2m-n \geq 3$ separately. 

\noindent\textbf{Case 1}. Suppose $2m-n=1$. In this case our estimate for $P_\lambda$ becomes $P_\lambda (x,y) \leq C |x|/|x-y|$. Consequently, there holds
\[
\int_{\Sigma_\lambda\setminus B_{R_2}}(y_1-\lambda) P_\lambda(x,y)[F_\ve(y^\lambda)+F_\ve(y )]dy\leq C\int_{\Sigma_\lambda\setminus B_{R_2}} \frac{|x|}{|x-y|} \frac{ |y|}{1+|y|^{2m+n}} dy.
\]
For $|x| \geq R_3 \gg 2 R_2$ to be determined later, we now split $\int_{\Sigma_\lambda\setminus B_{R_2}}$ as follows
\[
\int_{\Sigma_\lambda\setminus B_{R_2}}
= \int_{[\Sigma_\lambda\setminus B_{R_2}] \cap [B_{|x|/2} \cup ( \R^n \setminus B_{2|x|} )]}
+ \int_{[\Sigma_\lambda\setminus B_{R_2}] \setminus [B_{|x|/2} \cup ( \R^n \setminus B_{2|x|} )]}.
\]
Thanks to \eqref{estimate-outside-noP}, we get
\[
C \int_{[\Sigma_\lambda\setminus B_{R_2}] \cap [B_{|x|/2} \cup ( \R^n \setminus B_{2|x|} )]}
 \frac{|x|}{|x-y|} \frac{ |y|}{1+|y|^{2m+n}} dy
< \frac {\theta c_0}{8} .
\]
For the remaining integral on $[\Sigma_\lambda\setminus B_{R_2}] \setminus [B_{|x|/2} \cup ( \R^n \setminus B_{2|x|} )]$ which is a subset of $B_{2|x|} \setminus B_{|x|/2}$ because $|x| \geq 2R_2$, we estimate as follows
\[
C\int_{[\Sigma_\lambda\setminus B_{R_2}] \setminus [B_{|x|/2} \cup ( \R^n \setminus B_{2|x|} )]}
\leq \frac {C |x|^2}{1+|x|^{2m+n}} \int_{B_{2|x|} \setminus B_{|x|/2}} \frac{dy}{|x-y|}.
\]
Since the last integral is of order $|x|^n$ and $m \geq 2$ we can find some $R_3 \gg 1$ such that
\[
\frac {C |x|^2}{1+|x|^{2m+n}} \int_{B_{2|x|} \setminus B_{|x|/2}} \frac{dy}{|x-y|} \leq \frac { \theta c_0}{8}
\]
for all $x \in \Sigma_\lambda \cap B_{R_3}$. Combining the two estimates above gives \eqref{estimate-outside}. This completes the first case.

\medskip
\noindent\textbf{Case 2}. Suppose $2m-n \geq 3$. This case is easy to handle. Recall that our estimate for $P_\lambda$ becomes $P_\lambda (x,y) \leq C |y|^{2m-n-2}$. Consequently, there holds
\[
\int_{\Sigma_\lambda\setminus B_{R_2}}(y_1-\lambda) P_\lambda(x,y)[F_\ve(y^\lambda)+F_\ve(y )]dy\leq C\int_{\Sigma_\lambda\setminus B_{R_2}} \frac{ |y|^{2m-n-1}}{1+|y|^{2m+n}} dy.
\]
Seeing \eqref{estimate-outside-noP} or as in the proof of Lemma \ref{lem-lambda0}, we easily obtain the desired estimate.

Hence, up to this point, we have already shown that there are some $R_2 \gg 1$ and $R_3 \gg 1$ such that the estimate \eqref{estimate-outside} holds for $|x| \geq R_3$. Now we estimate the integral $\int_{\Sigma_\lambda \cap B_{R_2}}$. Keep using the constant $R_2$. By the uniform convergence in \eqref{expan-P}, we can choose $R_4\gg R_2$ such that 
\[
P_\lambda(x,y)\geq \frac12\theta\quad\text{for }|x|\geq R_4 \text{ and } |y|\leq R_2. 
\]
This and \eqref{c_0} imply that 
\begin{equation}\label{estimate-inside}
\int_{\Sigma_\lambda \cap B_{R_2}}(y_1-\lambda) P_\lambda(x,y)[F_\ve(y^\lambda)-F_\ve(y )]dy\geq \frac {\theta c_0}2 
\end{equation} 
for $|x| \geq R_4$. We conclude the lemma by combing the two estimates \eqref{estimate-outside} and \eqref{estimate-inside} and choosing $R = \max\{R_3, R_4\}$.
\end{proof}

We are now in a position to complete the first step, namely, to show that $\overline \lambda_\ve = 0$. To this purpose, we must rule out the case $\overline \lambda_\ve > 0$ and this is the content of the next two lemmas. First, we characterize the function $w_{\ve,\overline \lambda_\ve}$ in case $\overline \lambda_\ve > 0$.

\begin{lemma}\label{lem-barlambda>0}
If $\overline \lambda_\ve>0$ for some $\ve\in [0,\ve_*)$, then $w_{\ve,\overline \lambda_\ve}\equiv0$ in $\Sigma_{\overline \lambda_\ve}$. In other words, the function $u_\ve$ is symmetric with respect to the hyperplane $\{ x \in \R^n :x_1 = \overline \lambda_\ve\}$.
\end{lemma}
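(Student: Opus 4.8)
The plan is to argue by contradiction: suppose $\overline\lambda_\ve>0$ but $w_{\ve,\overline\lambda_\ve}\not\equiv 0$ in $\Sigma_{\overline\lambda_\ve}$. Since $w_{\ve,\overline\lambda_\ve}\geq 0$ in $\Sigma_{\overline\lambda_\ve}$ by the definition of $\overline\lambda_\ve$ together with continuity, the first thing I would record is a \emph{strong positivity} statement: if $w_{\ve,\overline\lambda_\ve}\geq 0$ and is not identically zero, then in fact $w_{\ve,\overline\lambda_\ve}>0$ throughout the open half-space $\Sigma_{\overline\lambda_\ve}$. This follows from the representation \eqref{eq-w2}: by Lemma~\ref{lem-ve_*} the inequality $w_{\ve,\overline\lambda_\ve}\geq 0$ forces $F_\ve(y^{\overline\lambda_\ve})-F_\ve(y)\geq 0$ on $\Sigma_{\overline\lambda_\ve}$, and this difference is not identically zero (otherwise, re-inserting into \eqref{eq-w2}, $w_{\ve,\overline\lambda_\ve}\equiv 0$, contrary to assumption); since the kernel $|x^\lambda-y|^{2m-n}-|x-y|^{2m-n}$ is strictly positive for $x,y\in\Sigma_{\overline\lambda_\ve}$, we conclude $w_{\ve,\overline\lambda_\ve}(x)>0$ for every $x\in\Sigma_{\overline\lambda_\ve}$.

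**The reflection step.** With strict positivity in hand, I would invoke Lemma~\ref{lem-1} with $\bar\lambda=\overline\lambda_\ve$: there exist $R\gg 1$ and $\delta>0$ such that $w_{\ve,\lambda}>0$ in $\Sigma_\lambda\setminus B_R$ for every $\lambda\in(\overline\lambda_\ve-\delta,\overline\lambda_\ve)$. It therefore remains to control $w_{\ve,\lambda}$ on the compact piece $\Sigma_\lambda\cap B_R$. Here I would use a standard compactness/continuity argument: because $w_{\ve,\overline\lambda_\ve}>0$ on the compact set $\overline{\Sigma_{\overline\lambda_\ve}\cap B_R}$ minus a neighbourhood of the hyperplane $T_{\overline\lambda_\ve}$ (where $w_{\ve,\overline\lambda_\ve}$ vanishes but its relevant one-sided behaviour is governed by a Hopf-type lemma via $\partial_{x_1} w_{\ve,\overline\lambda_\ve}<0$ on $T_{\overline\lambda_\ve}$, which again comes from differentiating the representation \eqref{eq-w2}), and because $(\lambda,x)\mapsto w_{\ve,\lambda}(x)$ is continuous, one can shrink $\delta$ further so that $w_{\ve,\lambda}\geq 0$ also holds on $\Sigma_\lambda\cap B_R$ for $\lambda\in(\overline\lambda_\ve-\delta,\overline\lambda_\ve)$. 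Combining the two regions gives $w_{\ve,\lambda}\geq 0$ in all of $\Sigma_\lambda$ for such $\lambda$, contradicting the minimality of $\overline\lambda_\ve$. Hence $w_{\ve,\overline\lambda_\ve}\equiv 0$, which is exactly the asserted symmetry across $\{x_1=\overline\lambda_\ve\}$.

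**Main obstacle.** The delicate point is the interplay near the hyperplane $T_{\overline\lambda_\ve}$: on $\Sigma_{\overline\lambda_\ve}$ we have strict positivity of $w_{\ve,\overline\lambda_\ve}$, but at $T_{\overline\lambda_\ve}$ the function vanishes identically, so a naive continuity argument on $\overline{\Sigma_\lambda\cap B_R}$ fails at the boundary slab. The fix is to establish the strict sign of the inward normal derivative $\partial_{x_1}w_{\ve,\overline\lambda_\ve}$ on $T_{\overline\lambda_\ve}$ — a Hopf-type estimate obtained by differentiating \eqref{eq-w2} under the integral sign and using that $F_\ve(y^{\overline\lambda_\ve})-F_\ve(y)\geq 0$ is not identically zero together with strict positivity of $\partial_{x_1}\bigl(|x^\lambda-y|^{2m-n}-|x-y|^{2m-n}\bigr)$ on $T_\lambda$ for $y\in\Sigma_\lambda$. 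This derivative estimate, valid uniformly on compact subsets, is what lets the continuity argument absorb a thin neighbourhood of $T_{\overline\lambda_\ve}$ and push the plane past $\overline\lambda_\ve$; everything else is routine given Lemmas~\ref{lem-ve_*} and \ref{lem-1}.
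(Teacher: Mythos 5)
Your proposal is correct in spirit and arrives at the same contradiction, but it takes a different technical route than the paper. Both proofs invoke Lemma~\ref{lem-1} to control $w_{\ve,\lambda}$ outside a large ball $B_R$. For the region inside $B_R$, the paper does \emph{not} try to directly prove $w_{\ve,\lambda}\geq0$ for $\lambda$ slightly below $\overline\lambda_\ve$. Instead it exploits the definition of $\overline\lambda_\ve$ as an infimum to produce a sequence $\mu_k\nearrow\overline\lambda_\ve$ for which $w_{\ve,\mu_k}$ is negative somewhere; it traps the negative minimum point $x_k$ inside $\overline{B_R}$, passes to a subsequential limit $x_\infty\in\Sigma_{\overline\lambda_\ve}\cup T_{\overline\lambda_\ve}$, and feeds this into \eqref{eq-5.6} to force $\int_{\Sigma_{\overline\lambda_\ve}}(y_1-\overline\lambda_\ve)P_{\overline\lambda_\ve}(x_\infty,y)[F_\ve(y^{\overline\lambda_\ve})-F_\ve(y)]\,dy=0$. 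Combined with the sign information of Lemma~\ref{lem-ve_*} this yields $F_\ve(y^{\overline\lambda_\ve})\equiv F_\ve(y)$, hence $w_{\ve,\overline\lambda_\ve}\equiv0$, a contradiction. The decisive feature of the representation \eqref{eq-5.6} is that it already divides out the factor $x_1-\lambda$; the resulting integral quantity is continuous and signed up to and including the hyperplane $T_\lambda$, so the degeneracy that you (rightly) flag as the main obstacle is handled automatically, with no explicit Hopf lemma and no need to match estimates across two regimes.

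Your version instead keeps the two-regime structure: a Hopf-type boundary derivative bound near $T_{\overline\lambda_\ve}$ plus compactness in the interior, then a continuity-in-$\lambda$ argument to push past $\overline\lambda_\ve$. That is a valid and standard alternative, and the derivative bound you want is indeed exactly the content of \eqref{eq-5.6} evaluated near the hyperplane. Two remarks on the details. First, the sign is reversed: since $w_{\ve,\overline\lambda_\ve}=0$ on $T_{\overline\lambda_\ve}$ and $w_{\ve,\overline\lambda_\ve}>0$ in $\Sigma_{\overline\lambda_\ve}$, the relevant estimate is $\partial_{x_1}w_{\ve,\overline\lambda_\ve}>0$ on $T_{\overline\lambda_\ve}$, not $<0$. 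Second, to carry out your plan you need the derivative bound for $w_{\ve,\lambda}$ on $T_\lambda$ (the reflecting hyperplane actually in play), not merely for $w_{\ve,\overline\lambda_\ve}$ on $T_{\overline\lambda_\ve}$; this follows by continuity of the right-hand side of \eqref{eq-5.6} in $\lambda$, and note that you cannot invoke Lemma~\ref{lem-ve_*} for $\lambda<\overline\lambda_\ve$ (that would be circular), so the continuity argument must stand on its own. With these points made precise your route works; the paper's sequential argument simply gets there with fewer moving parts.
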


\begin{proof}
Let $\overline \lambda_\ve>0$ for some $\ve\in [0,\ve_*)$ and assume by contradiction that $w_{\ve,\overline \lambda_\ve}\not\equiv0$ in $\Sigma_{\overline \lambda_\ve}$. 
This and the definition of $\overline \lambda_\ve$ imply that
\[
0 \not\equiv w_{\ve,\overline \lambda_\ve} \geq 0 \quad \text{in } \Sigma_{\overline \lambda_\ve}.
\]
By Lemma \ref{lem-1}, there exist $R \gg1$ and $\delta>0$ small enough such that
\[
w_{\ve,\lambda} >0\quad\text{in } \Sigma_{\lambda}\setminus B_R \quad\text{for every }\lambda\in ( \overline \lambda_\ve-\delta ,\overline \lambda_\ve). 
\]
Then there exists a sequence $\mu_k\nearrow\overline\lambda_\ve$ such that $w_{\ve, \mu_k}$ is negative somewhere in $\Sigma_{\mu_k}$. Since outside $B_R$, the function $w_{\ve, \mu_k}$ is strictly positive, for each $k$ there is some $x_k \in \Sigma_{\mu_k} \cap \overline{B_R}$ such that
\[
w_{\ve,\mu_k}(x_k) = \min_{\Sigma_{\mu_k}}w_{\ve,\mu_k} <0.
\]
In particular, there holds
\[
\frac{w_{\ve, \mu_k}(x_k)}{(x_k)_1-\mu_k}<0.
\] 
Obviously, the sequence $(x_k)$ is bounded as $x_k \in \overline{B_R}$. Also note that $\Sigma_{\overline \lambda_\ve} \subset \Sigma_{\mu_k}$ and $\Sigma_{\mu_k} \searrow \Sigma_{\overline \lambda_\ve}$ as $k \nearrow +\infty$. Therefore, up to a subsequence, we have
\[
\Sigma_{\overline \lambda_\ve} \cup T_{\overline \lambda_\ve} \ni x_\infty:=\lim_{k\to\infty} x_k. 
\]
In particular, by passing to the limit as $k \to \infty$, there holds $w_{\ve, \overline \lambda_\ve}(x_\infty) \leq 0$. This and \eqref{eq-5.6} implies that
\begin{align*}
0 \geq w_{\ve, \overline \lambda_\ve }(x_\infty)\frac{|x_\infty|^{2+n-2m}}{(x_\infty)_1-\overline \lambda_\ve}&= \int_{\Sigma_{\overline \lambda_\ve}} (y_1-\overline \lambda_\ve)P_{\overline \lambda_\ve} (x_\infty,y) [F_\ve(y^{\overline \lambda_\ve})-F_\ve(y )]dy \geq 0,
\end{align*} 
thanks to $|x_\infty| > 0$ and $F_\ve(y^{\overline \lambda_\ve}) \geq F_\ve(y )$ in $\Sigma_{\overline \lambda_\ve}$ by Lemma \ref{lem-ve_*}.
Thus, we must have
\[
F_\ve(y^{\bar \lambda\ve})-F_\ve(y ) = 0\quad\text{for any }y\in \Sigma_{\overline \lambda_\ve},
\]
which, by \eqref{eq-5.6}, now yields $w_{\ve,\overline \lambda_\ve}\equiv 0$ in $\Sigma_{\overline \lambda_\ve}$. However, this is a contradiction.  Once we have $w_{\ve,\overline \lambda_\ve}\equiv 0$ in $\Sigma_{\overline \lambda_\ve}$, the symmetry of $u_\ve$ follows from the definition of $w_{\ve,\overline \lambda_\ve}$. The proof is complete.
\end{proof}

From the characterization of $w_{\ve,\overline \lambda_\ve}$ in the case $\overline \lambda_\ve>0$ and the role of the size of $\ve$ and $\alpha$, we are able to show that in fact the case $\overline \lambda_\ve > 0$ cannot happen.

\begin{lemma}\label{lem-barlambda=0}
Let $\ve \in [0, \ve_*)$. There holds $\overline \lambda_\ve=0$. In particular, the function $u_\ve$ is symmetric with respect to the hyperplane $\{ x \in \R^n :x_1 = 0\}$.
\end{lemma}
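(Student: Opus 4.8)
The plan is to rule out $\overline\lambda_\ve>0$ by exploiting the consequence of Lemma \ref{lem-barlambda>0}: if $\overline\lambda_\ve>0$, then $u_\ve$ is symmetric about the hyperplane $\{x_1=\overline\lambda_\ve\}$, which together with the integral equation \eqref{IEn} forces $F_\ve(y^{\overline\lambda_\ve})=F_\ve(y)$ for every $y\in\Sigma_{\overline\lambda_\ve}$. I would first translate this symmetry back to the sphere: via the stereographic projection $\pi$ and the relation \eqref{def-u}, a symmetry hyperplane of $u_\ve$ not passing through the origin corresponds to a reflection symmetry of $v$ about a hyperplane through the center of $\S^n$ which does \emph{not} fix the poles $N$ and $S$. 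The key point is that $\overline\lambda_\ve>0$ means the symmetry hyperplane $\{x_1=\overline\lambda_\ve\}$ does not pass through the origin of $\R^n$, hence the corresponding great subsphere of $\S^n$ is not a ``coordinate'' one; in particular the image of $\infty$ (the north pole) is not fixed.

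Second, I would derive a contradiction by a Kazdan--Warner / Pohozaev-type obstruction, or more simply by using the equation itself. Since $F_\ve$ is determined by $u_\ve$ through \eqref{def-F}, the identity $F_\ve(y^{\overline\lambda_\ve})=F_\ve(y)$ on $\Sigma_{\overline\lambda_\ve}$ (combined with $u_\ve(y^{\overline\lambda_\ve})=u_\ve(y)$) reads
\[
\ve\Big[\big(\tfrac{2}{1+|y^{\overline\lambda_\ve}|^2}\big)^{2m}-\big(\tfrac{2}{1+|y|^2}\big)^{2m}\Big]u_\ve(y)
+\Big[\big(\tfrac{2}{1+|y^{\overline\lambda_\ve}|^2}\big)^{-c_\alpha}-\big(\tfrac{2}{1+|y|^2}\big)^{-c_\alpha}\Big]u_\ve(y)^{-\alpha}=0
\]
for all $y\in\Sigma_{\overline\lambda_\ve}$. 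When $\ve>0$: both bracketed coefficients have the \emph{same} sign (since $|y^{\overline\lambda_\ve}|<|y|$ makes $\tfrac{2}{1+|\cdot|^2}$ larger at $y^{\overline\lambda_\ve}$, and both exponents $2m>0$ and $-c_\alpha\ge 0$ preserve this), so the left-hand side is a sum of a strictly positive term and a nonnegative term on the open set where $|y^{\overline\lambda_\ve}|\ne|y|$; it cannot vanish identically. When $\ve=0$ and $\alpha<(n+2m)/(2m-n)$ we have $c_\alpha<0$, i.e. $-c_\alpha>0$, so the single remaining bracket is strictly positive on that set, again impossible. Either way we reach a contradiction, forcing $\overline\lambda_\ve=0$.

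Finally, with $\overline\lambda_\ve=0$ established, I would record the symmetry conclusion: by the definition of $\overline\lambda_\ve$ we have $w_{\ve,\lambda}\ge 0$ in $\Sigma_\lambda$ for all $\lambda\ge 0$, and letting $\lambda\searrow 0$ gives $w_{\ve,0}\ge 0$, i.e. $u_\ve(x)\ge u_\ve(x^0)$ for $x_1>0$. Running the whole moving plane argument from the left (replacing $x_1$ by $-x_1$, equivalently applying Lemmas \ref{lem-lambda0}--\ref{lem-barlambda>0} in the mirrored configuration) yields the reverse inequality $u_\ve(x)\le u_\ve(x^0)$ on $\{x_1>0\}$, hence $u_\ve$ is symmetric about $\{x_1=0\}$.

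\textbf{Main obstacle.} The delicate step is the sign bookkeeping in the degenerate identity $F_\ve(y^{\overline\lambda_\ve})=F_\ve(y)$: one must confirm that the perturbation coefficient (the $\ve$-term, governed by the exponent $2m>0$) and the main coefficient (governed by $-c_\alpha\ge 0$) point in the same direction, so that no cancellation between the two powers of $u_\ve$ can occur. This is exactly where the hypotheses $\ve\ge 0$ and $c_\alpha\le 0$ (with $c_\alpha<0$ when $\ve=0$) are used; the borderline case $c_\alpha=0$, $\ve=0$, i.e. $\alpha=(n+2m)/(2m-n)$, is genuinely excluded because there the equation is conformally invariant and non-symmetric solutions exist (cf. the discussion after Theorem \ref{thm-uniform}).
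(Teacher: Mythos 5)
Your core argument is exactly the paper's: use Lemma \ref{lem-barlambda>0} to reduce to the case $w_{\ve,\overline\lambda_\ve}\equiv 0$, combine with the integral representation \eqref{eq-w2} (this is the precise mechanism that ``forces'' $F_\ve(y^{\overline\lambda_\ve})=F_\ve(y)$ — together with the sign $F_\ve(y^{\overline\lambda_\ve})\ge F_\ve(y)$ from Lemma \ref{lem-ve_*}, the vanishing of that positive-kernel integral gives pointwise equality), then observe that the expanded identity is the sum of two brackets of the same sign, each strictly positive on $\{y_1>\overline\lambda_\ve\}$ because $|y^{\overline\lambda_\ve}|<|y|$ there, together with the hypotheses $\ve\ge 0$, $-c_\alpha\ge 0$ (and $-c_\alpha>0$ when $\ve=0$). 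The two-sided moving plane conclusion at the end also matches the paper. Two small remarks: (i) it would be cleaner to cite \eqref{eq-w2} and Lemma \ref{lem-ve_*} explicitly when claiming the symmetry of $F_\ve$ is ``forced'' — in fact the paper bypasses this and directly contradicts the vanishing integral by showing its integrand is strictly positive, which avoids stating an equality you immediately refute; (ii) the opening geometric remark is misleading: a reflection of $\R^n$ about the affine hyperplane $\{x_1=\overline\lambda_\ve\}$ with $\overline\lambda_\ve\ne 0$ does \emph{not} correspond via stereographic projection to a reflection of $\S^n$ about a great hypersphere — it corresponds to a non-isometric conformal map of $\S^n$ (a translation is involved), so there is no ``reflection symmetry of $v$'' to speak of. This detour is not load-bearing since you pivot to the direct computation, but as stated it is incorrect and should be removed or corrected.
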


\begin{proof} 
By way of contradiction, assume that $\overline \lambda_\ve>0$. In view of Lemma \ref{lem-barlambda>0}, we must have
\[
0=w_{\ve,\overline \lambda_\ve} (x) = u_\ve(x)-u_\ve(x^{\overline \lambda_\ve})
\]
in $\Sigma_{\overline \lambda_\ve}$. This and \eqref{eq-w2} tell us that
\begin{align*}
\int_{\Sigma_{\overline \lambda_\ve}} \big[|x^{\overline \lambda_\ve} -y|^{2m-n}-|x-y |^{2m-n} \big] [F_\ve(y^{\overline \lambda_\ve})-F_\ve(y )]dy=0
\end{align*}
for any $x \in \Sigma_{\overline \lambda_\ve}$, thanks to $ \gamma_{2m,n} \ne 0$, see Theorem \ref{thm-PDE-IE}. But this cannot happen because $|x -y| \leq |x^{\overline \lambda_\ve}-y | $ for any $x, y \in \Sigma_{\overline \lambda_\ve}$ and
\begin{align*} 
F_\ve(x)-F_\ve(x^{\overline \lambda_\ve}) &= \ve \Big[ \big( \frac{2}{1+|x|^2}\big)^{2m} - \big(\frac{2}{1+|x^{\overline \lambda_\ve}|^2}\big)^{2m} \Big] u_\ve(x) \\ 
&\quad +\Big[ \big( \frac{2}{1+|x|^2}\big)^{-c_\alpha} -\big(\frac{2}{1+|x^{\overline \lambda_\ve}|^2}\big)^{-c_\alpha} \Big] \frac{1}{u_\ve^\alpha(x)} \\ 
&<0,
\end{align*} 
everywhere in $\Sigma_{\overline \lambda_\ve}$, thanks to the estimates $u_\ve > 0$, $-c_\alpha \geq 0$, and $|x| \leq |x^{\overline \lambda_\ve}|$ in $\Sigma_{\overline \lambda_\ve}$. (Here we also use the fact that if $\ve=0$, then $\alpha<(n+2m)/(2m-n)$ in order to guarantee $-c_\alpha>0$.) Thus, we must have $\overline \lambda_\ve=0$. In particular, we have from the definition of $\overline \lambda_\ve$ the following
\[
u_\ve (x_1, x_2,...,x_n) \geq u_\ve (-x_1, x_2,...,x_n).
\]
We now apply the method of moving planes in the opposite direction, namely $\lambda < 0$, to get
\[
u_\ve (x_1, x_2,...,x_n) \leq u_\ve (-x_1, x_2,...,x_n).
\]
Hence
\[
u_\ve (x_1, x_2,...,x_n) = u_\ve (-x_1, x_2,...,x_n).
\]
This establishes the symmetry of $u_\ve$ with respect to the hyperplane $\{ x \in \R^n :x_1 = 0\}$. The proof is now complete.
\end{proof}

We now have a quick note. In the proof of Lemma \ref{lem-barlambda=0} above, we crucially use the hypothesis that either $\ve > 0$ and $\alpha \leq (n+2m)/(n-2m)$ or $\ve =0$ and $\alpha < (n+2m)/(n-2m)$. For the latter case, if $\ve =0$ and $\alpha = (n+2m)/(n-2m)$, then we cannot claim that $\overline \lambda_0=0$. Therefore, we could only claim that $u_0$ is radially symmetric with respect to some point not necessarily the origin. This leads to explicit form of non-trivial solutions to \eqref{eq-MAIN}$_0$ in the conformaly invariant case.

As a consequence of Lemma \ref{lem-barlambda=0} above, we obtain a Liouville type result for positive, smooth solution to \eqref{eq-MAIN}$_\ve$ for small $\ve>0$, hence proving Theorem \ref{thm-main}.

\begin{lemma}\label{lem-Liouville}
Any positive, smooth solution $v_\ve$ to \eqref{eq-MAIN}$_\ve$ for small $\ve$ must be constant.
\end{lemma}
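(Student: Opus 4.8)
The plan is to upgrade the one-plane symmetry produced by Lemma \ref{lem-barlambda=0} to full rotational symmetry of $v$, using only the invariance of \eqref{eq-MAIN}$_\ve$ under the isometry group $O(n+1)$ of $(\S^n,g_{\S^n})$; since all the substantial work is already contained in Section \ref{sec-compact} and in Lemmas \ref{lem-lambda0}--\ref{lem-barlambda=0}, what remains is light.

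First I would read off what Lemma \ref{lem-barlambda=0} says about $v$ itself. By \eqref{def-u} the function $u_\ve$ differs from $v\circ\pi_N^{-1}$ only by the positive factor $(2/(1+|x|^2))^{(n-2m)/2}$, which is invariant under the reflection $x\mapsto(-x_1,x_2,\dots,x_n)$ of $\R^n$; and a one-line computation from the formula $\pi_N^{-1}(x)=\big(2x/(|x|^2+1),\,(|x|^2-1)/(|x|^2+1)\big)$ shows that this reflection of $\R^n$ corresponds under $\pi_N^{-1}$ to the restriction to $\S^n$ of the Euclidean reflection $\sigma\colon\xi\mapsto(-\xi_1,\xi_2,\dots,\xi_{n+1})$ of $\R^{n+1}$. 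Hence the symmetry of $u_\ve$ about $\{x_1=0\}$ asserted by Lemma \ref{lem-barlambda=0} is precisely the statement that $v\circ\sigma=v$ on $\S^n$.

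Next I would use that $\DSn$, and therefore $\gjms$, commutes with every $R\in O(n+1)$ acting on $(\S^n,g_{\S^n})$, while $Q_n^{2m}$ is a constant; consequently, whenever $v$ is a positive smooth solution of \eqref{eq-MAIN}$_\ve$, so is $v\circ R$. Applying the previous paragraph to $v\circ R$ in place of $v$ gives $(v\circ R)\circ\sigma=v\circ R$, that is, $v$ is invariant under $R\sigma R^{-1}$, which is exactly the reflection of $\R^{n+1}$ in the hyperplane $(Re_1)^\perp$. As $R$ ranges over $O(n+1)$ the vector $Re_1$ ranges over all of $\S^n$, so $v$ is invariant under every reflection in a hyperplane through the origin; by the Cartan--Dieudonn\'e theorem these reflections generate $O(n+1)$, which acts transitively on $\S^n$, and therefore $v$ is constant. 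This proves the lemma, and with it Theorem \ref{thm-main}, on the range $\ve\in[0,\ve_*)$ with the extra restriction $\alpha<(n+2m)/(2m-n)$ when $\ve=0$, exactly as already required in Lemma \ref{lem-barlambda=0}.

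I do not expect a real obstacle at this stage: the only point that needs care is the intertwining identity in the second paragraph, which is a routine check from the explicit stereographic formula; the rest is soft. As an alternative to invoking $O(n+1)$-invariance, one may instead note that the moving-plane scheme of Lemmas \ref{lem-lambda0}--\ref{lem-barlambda=0} never used that the sliding direction is $e_1$: its only geometric inputs are the radial symmetry of $f$ and the inequality $|x|\le|x^\lambda|$ on the far side of an arbitrary hyperplane through the origin, both direction-independent. This directly yields that $u_\ve$ is radially symmetric about the origin, so $v\circ\pi_N^{-1}$ is radial and $v$ depends on $\xi_{n+1}$ alone; a single rotation moving the axis then forces $v$ to be constant.
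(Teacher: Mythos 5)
Your proof is correct and rests on the same two ingredients as the paper's: the one-hyperplane reflection symmetry delivered by Lemma~\ref{lem-barlambda=0} together with \eqref{def-u}, and the $O(n+1)$-covariance of \eqref{eq-MAIN}$_\ve$. The paper is terse at this point — it asserts that symmetry of $u_\ve$ about $\{x_1=0\}$ forces $v_\ve$ to depend only on $x_{n+1}$, which implicitly requires observing that the moving-plane scheme of Lemmas~\ref{lem-lambda0}--\ref{lem-barlambda=0} can be run along every direction in $\R^n$ (so that $u_\ve$ is radial), and then appeals to the freedom in the choice of the $x_{n+1}$-axis. Your conjugation argument — replacing $v$ by $v\circ R$ for $R\in O(n+1)$, applying the lemma once to read off invariance of $v$ under the reflection $R\sigma R^{-1}$ in $(Re_1)^\perp$, and concluding via Cartan--Dieudonn\'e (or simply that such reflections act transitively on $\S^n$) — makes this fully precise in one pass without re-running the moving-plane lemmas in other directions, and is a cleaner way to package the same idea. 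Your stated alternative (direction-independence of the moving-plane scheme $\Rightarrow u_\ve$ radial $\Rightarrow v_\ve$ depends on $\xi_{n+1}$ alone $\Rightarrow$ rotate the axis) is exactly the route the paper has in mind.
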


\begin{proof} 
Let $\ve \in [0,\ve_*)$ be arbitrary. From Lemma \ref{lem-barlambda=0} we know that the corresponding solution $u_\ve$ is symmetric with respect to the hyperplane $\{ x \in \R^n :x_1 = 0\}$. This together with the relation
\[
u_\ve (x) = \Big( \frac {1+|x|^2} 2 \Big)^\frac{2m-n}{2} \big( v_\ve \circ \pi_N^{-1} \big) (x)
\]
tells us that $v_\ve$ depends only on the last coordinate $x_{n+1}$. However, as the $x_{n+1}$-axis is freely chosen, we conclude that $v_\ve$ must be constant. This completes the proof.
\end{proof}

Before closing this section, we have a remark. To obtain the symmetry of solutions to \eqref{eq-MAIN}$_\ve$ for small $\ve$, our approach is based on the method of moving planes in the integral form. A natural question is weather or not one can use the method of moving spheres; see \cite{LZ95, Li}. Due to the presence of the weight $2/(1+|x|^2)$ in \eqref{def-F}, it is natural to ask whether or not the method of moving spheres can still be used. Toward a possible answer to this question, we refer the reader to the work \cite{JLX08}.

 
\section{Application to the sharp Sobolev inequality}
\label{sec-SS}

This section is devoted to a proof of Theorem \ref{thm-SS} which concerns a sharp (critical or subcritical) Sobolev inequality. Let $\ve \in (0, 1)$ and inspired by \eqref{eq-EP} consider the following variational problem
\begin{equation}\label{eq-SS-O}
\mathcal S_\ve = \inf_{0<\phi \in H^m (\S^n) } \Big( \int_{\S^n} \phi^{1-\alpha} \dsn \Big)^{\frac 2{\alpha-1}}
\int_{\S^n} \big[ \phi \gjms (\phi) - \ve \Q \phi^2 \big] \dsn
\end{equation}
with $m=(n+1)/2$ and $\alpha \in(0,1) \cup (1, 2n+1]$. We note that although the constant $(n-2m)/2$ becomes $-1/2$ in the present case, we intent to keep it in various calculation below for convenience. Similar convention also applies for $\gjms$ instead of $\mathbf P_n^{n+1}$, etc. Now as 
\[
\gjms (1) - \ve \Q =(1- \ve )\Q \ne 0
\]
by testing \eqref{eq-SS-O} with constant functions we conclude from \eqref{eq-SS-O} that
\[
\mathcal S_\ve \leq (1- \ve )\Q |\S^n|^\frac{\alpha+1}{\alpha -1}<0,
\] 
however, $\mathcal S_\ve $ could be $-\infty$. Next we show that $\mathcal S_\ve$ is finite and is achieved by some smooth positive function. 
 
\begin{lemma}\label{lem-SSexistence}
Assume that $m=(n+1)/2$ and $\alpha \in(0,1) \cup (1, 2n+1]$. Then, the constant $\mathcal S_\ve$ in \eqref{eq-SS-O} is finite and there exists some $v_\ve \in C^\infty (\S^n)$ such that $v_\ve>0$ and
\[
\Big( \int_{\S^n} v_\ve^{1-\alpha} \dsn \Big)^{\frac 2{\alpha-1}} \int_{\S^n} \big[ v_\ve \gjms (v_\ve) - \ve \Q v_\ve^2 \big] \dsn = \mathcal S_\ve.
\]
In particular, $v_\ve$ solves
\[
\gjms (v_\ve) - \ve \Q v_\ve = S_\ve v_\ve^{-\alpha}
\]
in $\S^n$ with
\[
S_\ve = \frac{\mathcal S_\ve}{\|v_\ve^{-1}\|_{L^{\alpha-1} (\S^n)}^{\alpha+1}}. 
\] 
\end{lemma}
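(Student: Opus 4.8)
The plan is to establish Lemma~\ref{lem-SSexistence} by the direct method of the calculus of variations, where the crucial role is played by the small perturbation $-\ve\Q\phi^2$, which for the subcritical case $\alpha<2n+1$ is not even needed but is kept for uniformity. First I would observe that, since $m=(n+1)/2$ gives $n=2m-1$ and hence $\Q=\tfrac{n-2m}2Q_n^{2m}=-\tfrac12 Q_n^{2m}<0$ (because $Q_n^{2m}>0$ when $n=2m-1$), the quadratic form $\phi\mapsto\int_{\S^n}[\phi\gjms(\phi)-\ve\Q\phi^2]\dsn$ is, up to the lower-order GJMS terms, coercive on $H^m(\S^n)$: writing $\gjms=(-\DSn)^m+\sum c_k(-\DSn)^k+\Q$ and using that $-\ve\Q>0$ adds a positive multiple of $\|\phi\|_{L^2}^2$, one gets $\int_{\S^n}[\phi\gjms(\phi)-\ve\Q\phi^2]\dsn\geq c_0\|\phi\|_{H^m}^2-C\|\phi\|_{L^2}^2$ for some $c_0>0$; combining this with spectral information (the lower-order terms only shift finitely many eigenvalues) or simply with the fact that the full operator $\gjms-\ve\Q\cdot$ is a sum of squares plus a positive $L^2$-term after regrouping, one sees the functional is bounded below on bounded sets. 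Then I would take a minimizing sequence $\phi_j$, normalized by $\int_{\S^n}\phi_j^{1-\alpha}\dsn=1$ (using $0$-homogeneity of the quotient in the appropriate sense, or scaling $\phi_j$ so that $\|\phi_j^{-1}\|_{L^{\alpha-1}}=1$), extract a weakly convergent subsequence $\phi_j\rightharpoonup v_\ve$ in $H^m(\S^n)$, and pass to the limit: weak lower semicontinuity handles the quadratic term, while Morrey/Sobolev embedding $H^m(\S^n)\hookrightarrow C^0(\S^n)$ (valid since $2m=n+1>n$) gives uniform convergence $\phi_j\to v_\ve$ in $C^0$, which is more than enough to pass to the limit in $\int_{\S^n}\phi_j^{1-\alpha}\dsn$.

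The main obstacle I expect is ensuring the limit $v_\ve$ is \emph{strictly positive}, so that $v_\ve^{-1}$ and $v_\ve^{-\alpha}$ make sense and the constraint $\int v_\ve^{1-\alpha}=1$ is preserved rather than degenerating. Uniform $C^0$ convergence gives $v_\ve\geq 0$ and $\int_{\S^n}v_\ve^{1-\alpha}\dsn=1$ only if $v_\ve$ does not touch zero; if $v_\ve(p)=0$ at some point then, depending on the sign of $1-\alpha$, the integral $\int v_\ve^{1-\alpha}$ either blows up or the constraint is violated. I would handle this by a barrier/Moser-type argument: first derive the Euler--Lagrange equation in the weak sense on the open set $\{v_\ve>0\}$, then use the integral representation and positivity of the Green-type kernel for $\gjms-\ve\Q\cdot$ — more precisely, since $\gjms$ differs from a positive operator by the explicitly computable constant $\Q$ and the perturbation makes $\gjms-\ve\Q\cdot$ have trivial kernel with positive first eigenvalue for $\ve\in(0,1)$, its inverse has a positive kernel, so $v_\ve=(\gjms-\ve\Q\cdot)^{-1}(S_\ve v_\ve^{-\alpha})$ with $S_\ve<0$ would force a contradiction unless $v_\ve$ is bounded below; a cleaner route is to note that a subsolution argument combined with the fact that $\phi\equiv\text{const}$ is admissible shows the minimizer cannot vanish, since near a zero the functional would be driven to $+\infty$ by the constraint while constants keep it finite and negative. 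Once strict positivity $v_\ve\geq c>0$ is in hand, $v_\ve$ is a genuine minimizer.

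Having the minimizer, deriving the Euler--Lagrange equation is routine: for $\psi\in C^\infty(\S^n)$ and small $t$, $v_\ve+t\psi>0$ is admissible, and differentiating the quotient $Q(\phi):=\big(\int\phi^{1-\alpha}\big)^{2/(\alpha-1)}\int[\phi\gjms(\phi)-\ve\Q\phi^2]$ at $t=0$ gives, after using the symmetry of $\gjms$,
\[
\frac{d}{dt}\Big|_{t=0}Q(v_\ve+t\psi)
=\frac{2}{\|v_\ve^{-1}\|_{L^{\alpha-1}}^{\alpha-1}}\int_{\S^n}\Big[\gjms(v_\ve)-\ve\Q v_\ve-\frac{\mathcal S_\ve}{\|v_\ve^{-1}\|_{L^{\alpha-1}}^{\alpha+1}}v_\ve^{-\alpha}\Big]\psi\,\dsn,
\]
which must vanish for all $\psi$, yielding $\gjms(v_\ve)-\ve\Q v_\ve=S_\ve v_\ve^{-\alpha}$ with $S_\ve$ as stated. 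Finally, smoothness $v_\ve\in C^\infty(\S^n)$ follows by elliptic bootstrap: $v_\ve\in C^0$ and strictly positive implies $v_\ve^{-\alpha}\in C^0$, so the RHS is continuous, hence $\gjms v_\ve\in C^{0,\beta}$, hence $v_\ve\in C^{2m,\beta}$ by Schauder estimates for the $2m$-th order elliptic operator $\gjms$ (its leading symbol is $(-\DSn)^m$, which is elliptic); iterating gives $v_\ve\in C^\infty(\S^n)$. I would remark that the only place $\ve>0$ is genuinely used is in the coercivity/positivity of $\gjms-\ve\Q\cdot$ and thus in guaranteeing the infimum is finite and attained; for $\alpha<2n+1$ one could also argue compactness of the subcritical embedding directly, but the perturbation gives a uniform treatment.
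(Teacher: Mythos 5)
Your overall framework (direct method, $H^m$ weak compactness, Morrey embedding to get $C^0$ convergence, then Euler--Lagrange and bootstrap) matches the paper, but the crucial step — strict positivity of the limit $v_\ve$ — is handled incorrectly, and both of your proposed routes fail.

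First, your claim that ``the perturbation makes $\gjms-\ve\Q\cdot$ have trivial kernel with positive first eigenvalue for $\ve\in(0,1)$'' is false. Applied to the constant function $1$, the shifted operator gives $\gjms(1)-\ve\Q=(1-\ve)\Q$, and since $n=2m-1$ one has $\Q=-\tfrac12 Q_n^{2m}<0$, so this eigenvalue is $(1-\ve)\Q<0$. The perturbed operator still has a negative eigenvalue, is not positive definite, and there is no reason for its Green kernel to be positive. (Indeed the whole point of the paper's discussion around \eqref{eq-S-new} is that $\paneitz$ is \emph{not} positive on $\S^3$, and adding $\ve|\Q|$ with $\ve<1$ does not fix this.) Second, your ``cleaner route'' — that near a zero the functional is driven to $+\infty$ by the constraint — also fails: for $\alpha>1$ a zero of $v_\ve$ makes $\big(\int v_\ve^{1-\alpha}\big)^{2/(\alpha-1)}$ potentially $+\infty$, but the quadratic factor is nonpositive along the minimizing sequence, so the product would tend toward $-\infty$, not $+\infty$; this is exactly the degeneration one is trying to \emph{exclude}, and nothing in your argument excludes it without already knowing $\mathcal S_\ve>-\infty$, which is part of the claim. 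The paper's argument instead invokes a nontrivial spectral fact specific to $n=2m-1$ (Corollary 3.1 of Hang's paper \cite{Hang}): if $v\in H^m(\S^n)$, $v\geq 0$, and $v$ vanishes somewhere, then $\int_{\S^n} v\,\gjms(v)\,\dsn\geq 0$. Combined with $-\ve\Q>0$ this forces $\int[v_\ve\gjms v_\ve-\ve\Q v_\ve^2]>0$, contradicting weak lower semicontinuity and $\mathcal S_\ve<0$. Without this (or an equivalent) positivity-on-vanishing-functions input, the strict positivity of the minimizer is not established.

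A secondary, fixable issue: normalizing by $\int\phi_j^{1-\alpha}=1$ does not by itself control $\|\phi_j\|_{L^2}$, so the coercivity inequality $\int\phi_j\gjms\phi_j\geq c_1\|\phi_j\|_{H^m}^2-c_2\|\phi_j\|_{L^2}^2$ does not yet give an $H^m$ bound along the minimizing sequence. The paper avoids this by normalizing $\max_{\S^n}v_k=1$, which trivially gives $\|v_k\|_{L^2}^2\leq|\S^n|$; then $\mathcal S_\ve<0$ and $Q_n^{2m}>0$ force $\int v_k\gjms v_k<0$, whence $c_1\|v_k\|_{H^m}^2\leq c_2|\S^n|$. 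You should adopt this normalization, or else supply a separate argument bounding $\|\phi_j\|_{L^2}$.
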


\begin{proof}
Let $(v_k)_k$ be a positive, smooth minimizing sequence in $H^{2m} (\S^n)$, that is \[
\Big( \int_{\S^n} v_k^{1-\alpha} \dsn \Big)^{\frac 2{\alpha-1}} \int_{\S^n} \big[ v_k \gjms (v_k) - \ve \Q v_k^2 \big] \dsn \searrow \mathcal S_\ve
\]
as $k \to \infty$. By the scaling invariant we can assume $\max_{\S^n} v_k =1$ which then yields 
\[
\|v_k \|_{L^2 (\S^n)}^2 \leq |\S^n|. 
\]
As $\gjms$ is a monic polynomial of $-\DSn$, the coefficient of the highest degree is equal to $1$, it is easy to get that
\[
\int_{\S^n} v_k \gjms (v_k) \dsn 
\geq c_1 \|v_k\|_{H^m(\S^n)}^2 - c_2 \|v_k\|_{L^2(\S^n)}^2 \geq c_1 \|v_k\|_{H^m(\S^n)}^2 - c_2 |\S^n|
\]
for some $c_1>0$ and $c_2> 0$. Note that $\mathcal S_\ve<0$ and $Q^{2m}_n>0$ would imply 
\[
\int_{\S^n}v_k \gjms v_k\dsn<0. 
\]
Therefore, the previous estimate leads to 
\[
 c_1 \|v_k\|_{H^m(\S^n)}^2 \leq c_2 |\S^n|,
\]
giving the boundedness of the sequence $(v_k)$ in $H^m(\S^n)$. Hence, after passing to a subsequence if necessary, there exists some $v_\ve \in H^m(\S^n)$ such that
\[
v_k \to v_\ve \geq 0 \text{ uniformly in } C (\S^n)
\]
by Morrey's inequality and the Arzel\`a--Ascoli lemma, and
\[
v_k \rightharpoonup v_\ve \text{ weakly in } H^m (\S^n).
\]
In particular, there holds $\max_{\S^n} v_\ve = 1$. As $v_\ve \geq 0$, there are two possibilities. First, let us assume that $v_\ve$ vanishes somewhere on $\S^n$. By assuming this we shall obtain a contradiction, therefore we must have $v_\ve>0$. Indeed, as $n=2m-1$, we can make use of \cite[Corollary 3.1]{Hang} to conclude that 
\[
\int_{\S^n} v_\ve \gjms (v_\ve) \dsn \geq 0.
\]
This together with $\ve \Q < 0$ and $\int_{\S^n}v_\ve^2 \dsn>0$ help us to get
\begin{align*}
0 & <\int_{\S^n} \big[ v_\ve \gjms (v_\ve) - \ve \Q v_\ve^2 \big] \dsn \\
&\leq \liminf_{k \nearrow +\infty} \int_{\S^n} \big[ v_k \gjms (v_k) - \ve \Q v_k^2 \big] \dsn.
\end{align*}
This is a contradiction to $\mathcal S_\ve<0$. Thus, $v_\ve > 0$ everywhere. Then, this allows us to gain
\[
v_k^{-1} \to v_\ve^{-1} \quad \text{uniformly in } C (\S^n)
\]
and consequently
\[
\int_{\S^n} v_k^{1-\alpha} \dsn \to \int_{\S^n} v_\ve^{1-\alpha} \dsn .
\]
Putting these facts together, we obtain
\begin{equation}\label{eq-S<quotient<S}
\begin{aligned}
\mathcal S_\ve & \leq \Big( \int_{\S^n} v_\ve^{1-\alpha} \dsn \Big)^{\frac 2{\alpha-1}} \int_{\S^n} \big[ v_\ve \gjms (v_\ve) - \ve \Q v_\ve^2 \big] \dsn \\
& \leq \liminf_{k \nearrow +\infty} \Big[ \Big( \int_{\S^n} v_k^{1-\alpha} \dsn \Big)^{\frac 2{\alpha-1}} \int_{\S^n} \big[ v_k \gjms (v_k) - \ve \Q v_k^2 \big] \dsn \Big]\\
&=\mathcal S_\ve.
\end{aligned}
\end{equation}
Hence, on one hand implies that $\mathcal S_\ve$ must be finite, on the other hand, yields that $v_\ve$ is a minimizer for \eqref{eq-SS-O}. Rest of the proof follows immediately. 
\end{proof}

Now  we are in a position to give a proof of  Theorem \ref{thm-SS}. 

\begin{proof}[Proof of Theorem \ref{thm-SS}]
Let $\ve>0$ and $\alpha \in(0,1) \cup (1, 2n+1]$. By Lemma \ref{lem-SSexistence}, there is some positive, smooth function $v_\ve$ satisfying
\[
\int_{\S^n} v_\ve^{1-\alpha} \dsn = 1
\] 
and
\[
\int_{\S^n} \big[ v_\ve \gjms (v_\ve) - \ve \Q v_\ve^2 \big] \dsn = \mathcal S_\ve.
\]
Then, up to a constant multiple, $v_\ve$ solves \eqref{eq-MAIN}$_\ve$ in $\S^n$. Therefore, for small $\ve>0$, it follows from Theorem \ref{thm-main} that $v_\ve$ is constant. Keep in mind that $\alpha \ne 1$. Hence, on one hand, as $((n-2m)/2) Q_n^{2m} = \gjms (1)$, we can compute to get
\[
\mathcal S_\ve = ( 1- \ve) \Q |\S^n|^\frac{\alpha+1}{\alpha-1},
\]
on the other hand, by the definition of $\mathcal S_\ve$ we get
\begin{align*}
\Big( \int_{\S^n} \phi^{1-\alpha} \dsn \Big)^{\frac 2{\alpha-1}} 
& \int_{\S^n} \big[ \phi \gjms (\phi) - \ve \Q \phi^2 \big] \dsn \\
& \geq ( 1- \ve) \Q | \S^n|^\frac{\alpha+1}{\alpha-1}
\end{align*}
for any $\phi \in H^m (\S^n)$ with $\phi>0$. Now letting $\ve \searrow 0$ we obtain
\[
\Big( \int_{\S^n} \phi^{1-\alpha} \dsn \Big)^{\frac 2{\alpha-1}} 
\int_{\S^n} \phi \gjms (\phi) \dsn
\geq \Q | \S^n|^\frac{\alpha+1}{\alpha-1}.
\]
Recall that 
\[
\frac{n-2m}2 Q_n^{2m} = \gjms (1) = \frac{ \Gamma ( n/2 + m) }{ \Gamma (n/2 - m )}.
\] 
This completes the proof of Theorem \ref{thm-SS}.
\end{proof}

Before closing this section, let us revisit the last comment in Remark \ref{rmk}. For convenience, let us relabel \eqref{eq-sSobolev} as follows
\begin{subequations}\label{eq-Sobolev}
\begin{align}
 \Big( \int_{\S^n} \phi^{1-\alpha} \dsn \Big)^{\frac {2}{\alpha -1}} \int_{\S^n} \phi \gjms (\phi) \dsn
\geq \frac{\Gamma (n/2 + m)}{\Gamma (n/2 - m )} | \S^n|^\frac{\alpha + 1}{\alpha - 1}.
\tag*{ \eqref{eq-Sobolev}$_\alpha$}
\end{align}
\end{subequations}
We shall establish the following, which  has its own interest.

\begin{proposition}\label{prop-order}
There holds
\begin{center}
\eqref{eq-Sobolev}$_{2n+1}$ $\longrightarrow$ \eqref{eq-Sobolev}$_\beta$ with $\beta \in (1,2n+1)$ $\longrightarrow$ \eqref{eq-sSobolev-limit} $\longrightarrow$ \eqref{eq-Sobolev}$_\alpha$ with $\alpha \in (0,1)$,
\end{center}
where the notation $A \longrightarrow B$ means we can obtain $B$ from $A$.
\end{proposition}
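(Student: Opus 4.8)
The plan is to establish the chain of implications in four independent steps, each relying on a suitable substitution or limiting argument that respects the structure of the quotient. Throughout, write $E(\phi) := \fint_{\S^n} \phi \gjms(\phi) \dsn$ for the (normalized) energy and recall the two useful reformulations: \eqref{eq-Sobolev}$_\alpha$ asserts $( \fint_{\S^n}\phi^{1-\alpha}\dsn)^{2/(\alpha-1)} E(\phi) \geq \Gamma(n/2+m)/\Gamma(n/2-m)$, while \eqref{eq-sSobolev-limit} asserts $\exp(-2\fint_{\S^n}\log\phi\dsn)\,E(\phi) \geq \Gamma(n/2+m)/\Gamma(n/2-m)$. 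In all cases the right-hand side is the same positive constant, call it $\kappa = \gjms(1) = \Gamma(n/2+m)/\Gamma(n/2-m)$.

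The first implication, \eqref{eq-Sobolev}$_{2n+1} \to$ \eqref{eq-Sobolev}$_\beta$ for $\beta \in (1,2n+1)$, is the substance of the last comment in Remark \ref{rmk}: the subcritical inequality follows from the critical one. The idea is that for a fixed positive $\phi$, the map $\beta \mapsto (\fint_{\S^n}\phi^{1-\beta}\dsn)^{2/(\beta-1)}$ is monotone in $\beta$ by Jensen's inequality (or, equivalently, monotonicity of $L^p$ means over a probability space). More precisely, for $1 < \beta < 2n+1$ one has $\|\phi^{-1}\|_{L^{\beta-1}} \leq \|\phi^{-1}\|_{L^{2n}}$ when measured against the normalized measure $|\S^n|^{-1}\dsn$, since $\beta - 1 < 2n$; raising both sides to the power $2/(\beta-1) > 0$ and combining with $2/(2n) = 1/n$, a short computation with the exponents shows $(\fint \phi^{1-\beta})^{2/(\beta-1)} \geq (\fint \phi^{-2n})^{1/n}$ — here one must track carefully that the inequality does not reverse. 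Hence the $\beta$-quotient dominates the critical quotient, and \eqref{eq-Sobolev}$_\beta$ follows from \eqref{eq-Sobolev}$_{2n+1}$, which is exactly \eqref{eq-cSobolev}.

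The second implication, \eqref{eq-Sobolev}$_\beta$ for $\beta \in (1,2n+1) \to$ \eqref{eq-sSobolev-limit}, is obtained by letting $\beta \searrow 1$. Writing the $\beta$-quotient in the averaged form and using $\fint_{\S^n}\phi^{1-\beta}\dsn = \fint_{\S^n} \exp((1-\beta)\log\phi)\dsn = 1 + (1-\beta)\fint_{\S^n}\log\phi\dsn + o(\beta-1)$ as $\beta \to 1$, one finds $(\fint_{\S^n}\phi^{1-\beta}\dsn)^{2/(\beta-1)} \to \exp(-2\fint_{\S^n}\log\phi\dsn)$. Since $E(\phi)$ is independent of $\beta$, passing to the limit in \eqref{eq-Sobolev}$_\beta \geq \kappa$ gives \eqref{eq-sSobolev-limit}. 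The only point needing a word of care is the justification of the Taylor expansion uniformly enough to pass to the limit, but since $\phi$ is fixed, continuous, and bounded away from $0$ and $\infty$, dominated convergence handles $\fint_{\S^n}\phi^{1-\beta}\dsn \to 1$ and the logarithm of the quotient behaves as claimed.

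The third implication, \eqref{eq-sSobolev-limit} $\to$ \eqref{eq-Sobolev}$_\alpha$ for $\alpha \in (0,1)$, uses Jensen's inequality in the opposite regime. For $\alpha \in (0,1)$ one has $1 - \alpha \in (0,1)$, and by Jensen applied to the concave function $t \mapsto t^{1-\alpha}$ (or to $\log$), $\fint_{\S^n}\phi^{1-\alpha}\dsn \leq (\fint_{\S^n}\phi\dsn)^{1-\alpha}$ is the wrong direction; instead apply Jensen to $\log$: $\fint_{\S^n}\log(\phi^{1-\alpha})\dsn \leq \log\fint_{\S^n}\phi^{1-\alpha}\dsn$, i.e. $(1-\alpha)\fint_{\S^n}\log\phi\dsn \leq \log\fint_{\S^n}\phi^{1-\alpha}\dsn$. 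Dividing by $\alpha - 1 < 0$ reverses the inequality, giving $\fint_{\S^n}\log\phi\dsn \geq \frac{1}{\alpha-1}\log\fint_{\S^n}\phi^{1-\alpha}\dsn$, hence $\exp(-2\fint_{\S^n}\log\phi\dsn) \leq (\fint_{\S^n}\phi^{1-\alpha}\dsn)^{2/(\alpha-1)}$. Multiplying by $E(\phi) > 0$ and invoking \eqref{eq-sSobolev-limit} yields \eqref{eq-Sobolev}$_\alpha$. I expect the main obstacle to be purely bookkeeping: keeping straight the direction of each inequality as exponents pass through $1$ and denominators like $\alpha - 1$ change sign — a single sign error collapses the whole chain, so each of the four steps should be checked by testing on a non-constant trial function to confirm the inequalities point the right way.
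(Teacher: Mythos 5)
Your overall strategy (monotonicity of $L^p$ means plus Jensen's inequality) is the right one and matches the paper's, but a pervasive sign error makes the proof as written incorrect. The constant $\kappa = \Gamma(n/2+m)/\Gamma(n/2-m)$ is \emph{negative}, not positive: with $m=(n+1)/2$ one has $n/2-m=-1/2$ and $\Gamma(-1/2)<0$. Consequently the only non-trivial case is $\int_{\S^n}\phi\gjms(\phi)\dsn < 0$ (the paper fixes this as convention \eqref{eq-convention}; otherwise the inequality holds trivially), so your blanket assumption $E(\phi)>0$ fails in precisely the cases that matter. This cascades through the chain. In the first step, $L^p$ monotonicity over the normalized measure actually gives $(\fint\phi^{1-\beta}\dsn)^{2/(\beta-1)} \leq (\fint\phi^{-2n}\dsn)^{1/n}$, not $\geq$ as you assert — you even flagged this as the step to ``track carefully'' — and it is the multiplication by the \emph{negative} energy that reverses the direction. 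In the third step you drop a sign: from $(1-\alpha)\fint\log\phi\dsn\leq\log\fint\phi^{1-\alpha}\dsn$, dividing by $\alpha-1<0$ gives $-\fint\log\phi\dsn \geq \frac{1}{\alpha-1}\log\fint\phi^{1-\alpha}\dsn$, hence $\exp(-2\fint\log\phi\dsn)\geq(\fint\phi^{1-\alpha}\dsn)^{2/(\alpha-1)}$, the opposite of your claim. Your two pairs of errors happen to compensate, so the final conclusions come out right, but this is an accident of the double mistake and not a valid argument.

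Your second implication ($\beta\searrow 1$) is logically sound but misses the intent of the proposition. The whole point, as emphasized just after Corollary \ref{cor-SS-limit}, is to derive \eqref{eq-sSobolev-limit} \emph{without} a limiting process. The paper instead applies Jensen's inequality \eqref{eq-Jensen} with $\psi=\phi^{-\gamma}$, $\gamma=\beta-1>0$, to get $\exp(-2\fint\log\phi\dsn)\leq(\fint\phi^{1-\beta}\dsn)^{2/(\beta-1)}$ at one fixed $\beta$, then flips via the negative energy. This avoids any convergence argument and uses only a single subcritical inequality rather than the whole family near $\beta=1$, so it yields a strictly stronger claim about the implication $A\longrightarrow B$ for a fixed pair $(A,B)$.
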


Before proving Proposition \ref{prop-order} we observe that, as $\Gamma(n/2+m)/\Gamma(n/2-m) < 0$, if $n=2m-1$, our related inequalities are only meaningful if 
\begin{equation}\label{eq-convention}
\int_{\S^n}\phi \gjms (\phi) \dsn<0.
\end{equation}
Therefore, from now on we always assume the above inequality. Besides, one can simplify the computation below by normalizing the measure on $\S^n$ in such a way that $|\S^n|=1$. However, we intend to keep it for clarity.

\begin{proof}[Proof of Proposition \ref{prop-order}]
Let us establish all $\longrightarrow$ each by each.

\noindent
\textbf{Proof of \eqref{eq-Sobolev}$_{2n+1}$ $\longrightarrow$ \eqref{eq-Sobolev}$_\beta$ with $\beta \in (1,2n+1)$}. Let $\beta \in (1,2n+1)$ be arbitrary but fixed. We wish to derive \eqref{eq-Sobolev}$_\beta$ from \eqref{eq-Sobolev}$_{2n+1}$. Thanks to $0<\beta-1<2n$, we can apply H\"older's inequality in the following way
\[
\int_{\S^n} \big( \phi^{-1} \big)^{\beta - 1} \dsn \leq |\S| ^\frac{2n+1-\beta}{2n} \Big(\int_{\S^n} \big( \phi^{-1} \big)^{2n} \dsn \Big)^\frac{\beta -1 }{2n}
\]
to get
\[
\Big( \int_{\S^n} \phi^{1-\beta} \dsn \Big)^{\frac {2}{\beta -1}} 
\leq |\S| ^\frac{2n+1-\beta}{n(\beta-1)} \Big( \int_{\S^n} \phi^{-2n} \dsn \Big)^{\frac 1{n}}.
\]
From this and \eqref{eq-convention} one immediately obtains
\begin{align*}
\Big( \int_{\S^n} \phi^{1-\beta} &\dsn \Big)^{\frac {2}{\beta -1}} 
\int_{\S^n}\phi \gjms (\phi) \dsn\\
& \geq |\S^n| ^\frac{2n+1-\beta}{n(\beta-1)} \Big( \int_{\S^n} \phi^{-2n} \dsn \Big)^{\frac 1{n}}\int_{\S^n}\phi \gjms (\phi) \dsn.
\end{align*}
With help of \eqref{eq-Sobolev}$_{2n+1}$ and the identity
\[
\frac{2n+1-\beta}{n(\beta-1)}+ \frac{2m}n = \frac{\beta+1}{\beta-1}
\]
we obtain \eqref{eq-Sobolev}$_\beta$ as claimed. (Keep in mind that $2m=n+1$.) This shows the first $\longrightarrow$ from the left.

\noindent
\textbf{Proof of \eqref{eq-Sobolev}$_\beta$ with $\beta \in (1,2n+1)$ $\longrightarrow$ \eqref{eq-sSobolev-limit}}. We now consider arbitrary but fixed $\beta \in (1,2n+1)$ and we wish to derive \eqref{eq-sSobolev-limit} from \eqref{eq-Sobolev}$_\beta$. By Jensen's integral inequality of the form 
\begin{equation}\label{eq-Jensen}
\frac 1{|\S^n|}\int_{\S^n}\log \psi \, \dsn \leq
\log \Big( \frac 1{|\S^n|}\int_{\S^n}\psi \, \dsn \Big)
\end{equation} 
we know by choosing $\psi = \phi^{-\gamma}$ that
\begin{equation}\label{eq-sSobolev-limit-2}
\exp \Big(- \frac 2{|\S^n|}\int_{\S^n}\log  \phi \, \dsn \Big) 
\leq \Big( \frac 1{|\S^n|}\int_{\S^n}  \phi^{-\gamma} \, \dsn\Big)^{2/\gamma}
\end{equation} 
for any $\gamma \in \R$. In \eqref{eq-sSobolev-limit-2} we choose $\gamma = \beta - 1$ and together with \eqref{eq-convention} we eventually get
\begin{align*}
\exp \Big(- \frac 2{|\S^n|} & \int_{\S^n}\log\phi \, \dsn \Big)  
\int_{\S^n}\phi \gjms (\phi)\dsn \\
& \geq 
|\S^n|^\frac{2}{1-\beta} \Big(\int_{\S^n} \phi^{1-\beta} \,\dsn\Big)^\frac{2}{\beta-1} \int_{\S^n}\phi \gjms (\phi)\dsn.
\end{align*}
With help of \eqref{eq-Sobolev}$_\beta$ we quickly obtain the inequality \eqref{eq-sSobolev-limit}. Hence we have the second $\longrightarrow$ from the left. (We should point out that the above argument works for any $\beta>1$ as  long as \eqref{eq-Sobolev}$_\beta$ is available. In particular, it works for $\beta = 2n+1$. However, we intend to keep $\beta < 2n+1$ since we want to show that the limiting case can be derived from the subcritical case.)

\noindent
\textbf{Proof of \eqref{eq-sSobolev-limit} $\longrightarrow$ \eqref{eq-Sobolev}$_\alpha$ with $\alpha \in (0,1)$}. Let us now consider arbitrary but fixed $\alpha \in (0,1)$ and we wish to derive \eqref{eq-Sobolev}$_\alpha$ from \eqref{eq-sSobolev-limit}. Indeed, still by Jensen's integral inequality \eqref{eq-Jensen} applied for $\psi = \phi^\gamma$, we obtain
\begin{equation}\label{eq-sSobolev-limit-3}
\exp \Big( \frac 2{|\S^n|}\int_{\S^n}\log  \phi \, \dsn \Big) 
\leq \Big( \frac 1{|\S^n|}\int_{\S^n}  \phi^\gamma \, \dsn\Big)^{2/\gamma}
\end{equation} 
for any $\gamma \in \R$. In \eqref{eq-sSobolev-limit-3} we choose $\gamma = 1-\alpha$ and reverse the resulting inequality to get
\[
\exp \Big( -\frac 2{|\S^n|}\int_{\S^n}\log  \phi \, \dsn \Big) 
\geq \Big( \frac 1{|\S^n|}\int_{\S^n}  \phi^{1-\alpha} \, \dsn\Big)^\frac{2}{\alpha -1}.
\]
Combining this with \eqref{eq-convention} gives
\begin{align*}
\Big(\int_{\S^n} \phi^{1-\alpha} \, & \dsn\Big)^\frac{2}{\alpha-1}
\int_{\S^n}\phi \gjms (\phi)\dsn \\
& \geq
|\S^n|^\frac{2}{\alpha-1} \exp \Big(- \frac 2{|\S^n|}\int_{\S^n}\log\phi \, \dsn \Big) 
\int_{\S^n}\phi \gjms (\phi)\dsn.
\end{align*}
With the help of \eqref{eq-sSobolev-limit} we are now able to obtain the inequality \eqref{eq-Sobolev}$_\alpha$. This establishes the last $\longrightarrow$ from the left, hence completes our proof.
\end{proof}

Again we should point out that the above argument for the last $\longrightarrow$ from the left works for any $\alpha<1$, namely we have the following sharp inequality
\[ 
\fint_{\S^n} \phi \gjms (\phi) \dsn
\geq \frac{\Gamma (n/2 + m)}{\Gamma (n/2 - m )}  \Big( \fint_{\S^n} \phi^\gamma \dsn \Big)^{2/\gamma}
\]
for any $\gamma :=1-\alpha >0$. This makes sense because $\Gamma (n/2 + m)/\Gamma (n/2 - m )<0$.

In the final discussion, we show that we can actually compare the two inequalities \eqref{eq-Sobolev}$_\beta$ with $\beta \in (1,2n+1)$ and \eqref{eq-Sobolev}$_\alpha$ with $\alpha \in (0,1)$ without using the limiting inequality \eqref{eq-sSobolev-limit}. Indeed, by decomposing the constant $1$ as 
\[
1 = \phi^\frac{(1-\alpha)(\beta-1)}{\beta - \alpha} \phi^{-\frac{(1-\alpha)(\beta-1)}{\beta - \alpha} }
\]
and applying H\"older's inequality in the following way
\[
|\S^n| \leq \Big( \int_{\S^n} \phi^{1-\alpha}  \dsn \Big)^\frac{\beta-1}{\beta-\alpha }
\Big( \int_{\S^n} \phi^{1-\beta}  \dsn \Big)^\frac{1-\alpha}{\beta-\alpha }
\]
we arrive at
\[
\Big( \int_{\S^n} \phi^{1-\alpha} \dsn \Big)^{\frac {1}{\alpha -1}} 
\leq |\S| ^\frac{\alpha-\beta}{(\beta-1)(1-\alpha)} \Big( \int_{\S^n} \phi^{1-\beta} \dsn \Big)^{\frac 1{\beta -1}}.
\]
Combining \eqref{eq-Sobolev}$_\beta$ with \eqref{eq-convention} and the identity 
\[
\frac{\beta + 1}{\beta -1} + \frac{2(\alpha-\beta)}{(\beta-1)(1-\alpha)} = \frac{\alpha+1}{\alpha-1}
\]
gives \eqref{eq-Sobolev}$_\alpha$.


\section*{Data availability}

Data sharing is not applicable to this article as no dataset was generated or analysed during the current study.

\section*{Acknowledgments}

This work was initiated and finalized when QAN was visiting the Vietnam Institute for Advanced Study in Mathematics (VIASM) in 2021 and in 2022. QAN would like to thank VIASM for hospitality and financial support. The research of QAN is funded by Vietnam National Foundation for Science and Technology Development (NAFOSTED) under grant number 101.02-2021.24 ``\textit{On several real and complex differential operators}''. AH was partially supported by the SNSF grant no. P4P4P2-194460. Last but not least, the authors would like to thank Professor Dong Ye and an anonymous referee for their careful reading and useful comments leading to some improvement of the paper. Proposition \ref{prop-order} and the discussion near the end of section \ref{sec-SS} are actually due to them.



\begin{thebibliography}{99999999}

\bibitem[BVV91]{VV} 
\textsc{M.-F. Bidaut-V\'eron, L. V\'eron}, 
Nonlinear elliptic equations on compact Riemannian manifolds and asymptotics of Emden equations. \textit{Invent. Math.} \textbf{106} (1991) 489--539.


\bibitem[CGY02]{CGY02}
\textsc{S.Y.A. Chang, M.J. Gursky, P.C. Yang},
An equation of Monge-Ampere type in conformal geometry, and four-manifolds of positive Ricci curvature,
\textit{Ann. of Math.} (2) \textbf{155} (2002) 709--787.

\bibitem[CL91]{cl1991}
\textsc{W. Chen, C. Li},
Classification of solutions of some nonlinear elliptic equations.,
\textit{Duke Math. J.} \textbf{63} (1991) 615--622.

\bibitem[CLO06]{clo2006}
\textsc{W. Chen, C. Li, B. Ou},
Classification of solutions for an integral equation,
\textit{Comm. Pure Appl. Math.} \textbf{59} (2006) 330--343.

\bibitem[CLS22]{CLS}
\textsc{L. Chen, G. Lu, Y. Shen},
Sharp subcritical Sobolev inequalities and uniqueness of nonnegative solutions to high-order Lane--Emden equations on $\mathbb S^n$, 
\textit{Commun. Pure Appl. Anal.} \textbf{21} (2022), pp. 2799--2817.

\bibitem[DN22]{DuocNgo22}
\textsc{T.V. Duoc, Q.A. Ng\^o},
Radial and non-radial solutions to $\Delta^3 u+u^{-q}=0$ in $\R^3$, 
ViAsM 22.10, 2022. [Available online at https://viasm.edu.vn/en/publications/preprints.]

\bibitem[FKT22]{FKT22}
\textsc{R.L. Frank, T. König, H. Tang},
Reverse conformally invariant Sobolev inequalities on the sphere,
\textit{J. Funct. Anal.} \textbf{282} (2022) Art. 109339 27 pp.

\bibitem[GS81]{Gidas} 
\textsc{B. Gidas, J. Spruck}, Global and local behaviour of positive solutions of nonlinear elliptic
equations. \textit{Commun. Pure Appl. Math.} \textbf{34}, 525-598 (1981).

\bibitem[GJMS92]{GJMS}
\textsc{C.R. Graham, R. Jenne, L. Mason, G. Sparling},
Conformally invariant powers of the Laplacian, I: existence,
\textit{J. London Math. Soc.} \textbf{46} (1992) 557--565.

\bibitem[Han07]{Hang}
\textsc{F. Hang}
On the higher order conformal covariant operators on the sphere, 
\textit{Commun. Contemp. Math.} \textbf{9} (2007) 279--299.
 
\bibitem[HY04]{HangYang2004}
\textsc{F. Hang, P. Yang},
The Sobolev inequality for Paneitz operator on three manifolds,
\textit{Calc. Var. Partial Differential Equations} \textbf{21} (2004) 57--83.
 
\bibitem[HY16]{HangYang16}
\textsc{F. Hang, P. Yang},
Lectures on the fourth-order Q curvature equation, \textit{Lect. Notes Ser. Inst. Math. Sci. Natl. Univ. Singap.} \textbf{31} (2016) 1--33.

\bibitem[HY20]{HangYang}
\textsc{F. Hang, P. Yang},
A perturbation approach for Paneitz energy on standard three sphere,
\textit{Int. Math. Res. Not. IMRN} \textbf{2020} 3295–3317.

\bibitem[HW19]{HW} 
\textsc{A. Hyder, J. Wei}, 
Non-radial solutions to a bi-harmonic equation with negative exponent. \textit{Calc. Var. Partial Differential Equations} \textbf{58} (2019) Art. 198 11 pp.

\bibitem[Li04]{Li}
\textsc{Y.Y. Li},
Remark on some conformally invariant integral equations: the method of moving spheres,
\textit{J. Eur. Math. Soc. (JEMS)} \textbf{6} (2004) 153--180.

\bibitem[LZ95]{LZ95}
\textsc{Y.Y. Li, M. Zhu},
Uniqueness theorems through the method of moving spheres,
\textit{Duke Math. J.} \textbf{80} (1995) 383--417.

\bibitem[JLX08]{JLX08}
\textsc{Q. Jin, Y.Y. Li, H. Xu},
Symmetry and asymmetry: the method of moving spheres,
\textit{Adv. Differential Equations} \textbf{13} (2008) 601--640.

\bibitem[Mar09]{Martinazzi}
\textsc{L. Martinazzi},
Classification of solutions to the higher order Liouville's equation on $\mathbb R^{2m}$,
\textit{Math. Z.} \textbf{263} (2009) 307--329.

\bibitem[Ngo18]{Ngo} 
\textsc{Q.A. Ng\^o} 
Classification of entire solutions of $(-\Delta)^N u+u^{-(4N-1)}=0$ with exact linear growth at infinity in $\R^{2N-1}$,
\textit{Proc. Amer. Math. Soc.} \textbf{146} (2018) 2585--2600.


\bibitem[WX99]{WeiXu}
\textsc{J. Wei, X. Xu},
Classification of solutions of higher order conformally invariant equations,
\textit{Math. Ann.} \textbf{313} (1999) 207--228. 

\bibitem[XY02]{XuYang}
\textsc{X. Xu, P. Yang},
On a fourth order equation in 3-D,
\textit{ESAIM Control Optim. Calc. Var.} \textbf{8} (2002) 1029--1042.

\bibitem[YZ04]{YangZhu2004}
\textsc{P. Yang, M. Zhu},
On the Paneitz energy on standard three sphere,
\textit{ESAIM Control Optim. Calc. Var.} \textbf{10} (2004) 211--223.

\bibitem[Zha21]{Zhang}
\textsc{S. Zhang},
A Liouville-type theorem of the linearly perturbed Paneitz equation on $S^3$,
\textit{Int. Math. Res. Not. IMRN} \textbf{2023} 1730--1759.

\end{thebibliography}
\end{document}